\newtheorem{theorem}{Theorem}[section]
\newtheorem{lemma}[theorem]{Lemma}
\newtheorem{example}[theorem]{Example}
\newtheorem{definition}[theorem]{Definition}
\newtheorem{remark}[theorem]{Remark}
\newtheorem{proposition}[theorem]{Proposition}
\begin{document}

\pagestyle{plain} \pagenumbering{arabic}

\title{Zariski invariant for quasi-ordinary hypersurfaces}
\author{Barbosa, R. A. and Hernandes, M. E.
\thanks{The first author
was partially supported by CAPES and the second one by CNPq.}\ \
\thanks{Barbosa, R. A. email: barbosa.rafael@ufms.br; Hernandes, M. E. email: mehernandes@uem.br }}
\date{ \ }
\maketitle

\begin{center}
	
2020 Mathematics Subject Classification: 14B05 (primary), 32S25 (secondary).

key words: Quasi-ordinary hypersurface, Analytical invariants, Generalized Zariski exponents.
\end{center}

\begin{abstract}
	We introduced an $\tilde{\mathcal{A}}$-invariant for quasi-ordinary parameterizations and we consider it to describe quasi-ordinary surfaces with one generalized characteristic exponent admitting a countable moduli.
\end{abstract}

\section{Introduction}

In \cite{zariski-inv} Zariski introduces an analytic invariant for irreducible plane curves (plane branches) that can be determined directly from a parameterization such an invariant is known as {\it Zariski invariant} or {\it Zariski exponent}. Considering the topological class and the Zariski exponent it is possible to describe all plane branches admitting a zero-dimensional moduli space (see \cite{gaffney} and \cite{handbook}, for instance). 

Analytic plane curves are particular cases of quasi-ordinary hypersurfaces. An analytic germ $\left( \mathcal{X}, 0 \right) \subset \left( \mathbb{C}^{r+1}, 0 \right )$ of hypersurface is called a {\it quasi-ordinary}, shortly q.o.h., if there are local coordinates $(\underline{X},X_{r+1}):=\left( X_{1}, \ldots , X_{r}, X_{r+1} \right) $ such that $\left( \mathcal{X}, 0 \right)$, in these coordinates, is given by $\left\lbrace (\alpha_1,\ldots ,\alpha_{r+1}) \in \mathbb{C}^{r+1}; f(\alpha_1,\ldots ,\alpha_{r+1}) = 0 \right\rbrace $ where $f \in \mathbb{C}\{\underline{X} \} \left[ X_{r+1} \right]$ is a Weierstrass polynomial with discriminant $\Delta_{X_{r+1}}f=\underline{X}^{\delta}\cdot u:=X_1^{\delta_1}\cdot\ldots\cdot X_{r}^{\delta_r}\cdot u$
for some unit $u\in\mathbb{C}\{\underline{X}\}$ and $\delta=(\delta_1,\ldots ,\delta_r)\in\mathbb{N}^r$. This condition is equivalent to saying that there exists a finite morphism $\pi : (\mathcal{X},0)\rightarrow (\mathbb{C}^r,0)$ such that the discriminant locus is contained in a normal crossing divisor.

Although q.o.h. are generalizations of plane curves, such hypersurfaces do not have isolated singularities in general.
The only possible quasi-ordinary hypersurface isolated singularities are plane curves and normal surfaces. On the other hand plane branches and irreducible q.o.h. share some important properties: in both cases, we can obtain parameterizations and determine the topological class using a finite number of certain exponents. In fact, by the Abhyankar-Jung theorem (see \cite{Ab}), if $f\in \mathbb{C}\{\underline{X} \} \left[ X_{r+1} \right]$ is an irreducible Weierstrass polynomial of degree $n$ and it defines a q.o.h. then any root $\xi$ of $f$ (called a \textit{quasi-ordinary branch}) belongs to $\mathbb{C}\left\lbrace
\underline{X}^{\frac{1}{n}} \right\rbrace:=\mathbb{C}\left\lbrace
X_1^{\frac{1}{n}},\ldots, X_r^{\frac{1}{n}} \right\rbrace$. In this way, denoting $\{\xi_{1},\ldots ,\xi_{n}\}$ the set of roots of $f$ we have
$$
\Delta_{X_{r+1}}f=\prod_{i \neq j}(\xi_{i}-\xi_{j})=\underline{X}^{\delta}\cdot u(\underline{X}) \in \mathbb{C}\left\lbrace \underline{X}\right\rbrace\ \ \mbox{with}\ u(\underline{0})\neq 0.
$$

In particular,
$ \xi_{i}-\xi_{j}=X_{1}^{\frac{\lambda_{1}(i,j)}{n}}
\cdot\ldots\cdot
X_{r}^{\frac{\lambda_{r}(i,j)}{n}}u_{ij} \in \mathbb{C}\left\lbrace
\underline{X}^{\frac{1}{n}} \right\rbrace$ with
$u_{ij}$ a unit and $\lambda_{k}(i,j) \in
\mathbb{N}$ for $k=1, \ldots, r$.

Considering the usual product order $\preceq$ in $\mathbb{N}^r$, that is, $\alpha\preceq \beta$ if and only if $\alpha_i\leq\beta_i$ for all $1\leq i\leq r$ and denoting $\lambda_{1},\ldots ,\lambda_g$ the distinct $r$-tuples $\lambda(i,j)=(\lambda_{1}(i,j),\ldots ,\lambda_{r}(i,j))$ we can reindex them in such a way that $\lambda_{1}\prec\ldots\prec\lambda_g$ (Lemma 5.6, \cite{lipman-top}). The elements $\lambda_i$ with $1\leq i\leq g$ are called \textit{(generalized) characteristic exponents} of $f$.

The generalized characteristic exponents play an important role in the topological classification of irreducible q.o.h.. Two q.o.h. $(\mathcal{X},0)$ and $(\mathcal{Y},0)$ in $\mathbb{C}^{r+1}$ are \textit{topologically equivalent} as immersed germ, if there are neighborhoods $U$ and $V$ of origin and a (germ of) homeomorphism $\tilde{\Phi}: (\mathbb{C}^{r+1},0)\rightarrow (\mathbb{C}^{r+1},0)$ such that $\tilde{\Phi}(\mathcal{X}\cap U)=\mathcal{Y}\cap V$. If $\tilde{\Phi}$ is an analytic isomorphism, so we say that $(\mathcal{X},0)$ and $(\mathcal{Y},0)$
are \textit{analytically equivalent}. As a q.o.h. $(\mathcal{X},0)\subset \mathbb{C}^{r+1}$ can be defined by distinct Weierstrass polynomials the generalized characteristic exponents are not uniquely determined. On the other hand Lipman and Gau (see \cite{lipman-top} and \cite{gau}) characterized the topological type of irreducible h.q.o. by means $\{n,\lambda_1,\ldots ,\lambda_g\}$ associated to a particular quasi-ordinary branch that they called normalized (see Section 2). In particular, we can conclude that the multiplicity of an irreducible h.q.o. is a topological invariant and it is equal to $min\{n,\sum_{i=1}^{r}\lambda_{1i}\}$ where $\lambda_1=(\lambda_{11},\ldots ,\lambda_{1r})$.

Similarly to the irreducible plane curve, we can consider a semigroup $\Gamma\subset\mathbb{N}^r$ that determines and it is determined by the set $\{n,\lambda_1,\ldots ,\lambda_g\}$, that is, $\Gamma$ is also a complete topological invariant. The analytical invariance of $\Gamma$ was shown by
Popescu-Pampu in \cite{popescu} and Gonz\'alez-P\'erez in \cite{pedro}.

As mentioned above, several properties and results about plane branches can be generalized by properly introducing concepts in the context of q.o.h.. For the convenience of the reader, we recall some of these results in Section 2.

In this paper, we explore the notion of {\it generalized Zariski exponents} that extend the Zariski invariant for irreducible plane curve introduced in \cite{zariski-inv}. Such notion was considered {\it en passant} by Panek in her thesis \cite{TeseNayene} under the supervision of the second author. In Section 3 we show that the generalized Zariski exponents are invariant for $\tilde{\mathcal{A}}$-equivalence of an irreducible normalized q.o.h. where $\tilde{\mathcal{A}}$ is a subgroup of the well-known group $\mathcal{A}$ (see Definition \ref{A-tilde}) considered in \cite{Marcelo-Nayene}.

Taking into account the generalized Zariski exponents, we explore irreducible q.o.h. such that its topological class admits a countable number of distinct $\tilde{\mathcal{A}}$-classes that we call {\it quasi-simple} singularity. For plane branches, quasi-simple are precisely simple singularities and they were classified by Bruce and Gaffney (see \cite{gaffney} or \cite{handbook}). In Section 4, we present normal forms for quasi-ordinary surfaces with one generalized characteristic exponent that are quasi-simple concerning the formal $\tilde{\mathcal{A}}$-action.

\section{Quasi-ordinary parameterizations}

In this section, we recall some results related to quasi-ordinary branches, their topological and analytical aspects.

In \cite{lipman-surfaces} (see Proposition 1.3) Lipman shows that a nonunit $\xi =\sum c_{\delta}\underline{X}^{\frac{\delta}{n}}\in\mathbb{C}\left\lbrace \underline{X}^{\frac{1}{n}}\right\rbrace$ such that $n$ and all entries of $\delta$ with $c_{\delta}\neq 0$ do not have a common nontrivial divisor is a quasi-ordinary branch if and only if there exist $r$-tuples $\lambda_{1} \prec \lambda_{2} \prec \cdots \prec \lambda_{g}$ with $c_{\lambda_{i}}\neq 0$ satisfying $\lambda_{j}\not\in Q_{j-1}:=n\mathbb{Z}^{r}+\sum_{i=1}^{j-1}\lambda_{i}\mathbb{Z}$ and
$\delta\in n\mathbb{Z}^{r}+\sum_{\lambda_{i}\preceq \delta}\lambda_{i}\mathbb{Z}$ for any $c_{\delta} \neq 0$.
The $r$-tuples $\lambda_i$ with $1\leq i\leq g$ are the {\it generalized characteristic exponents} of $\xi$.

Given a quasi-ordinary branch $\xi =\sum c_{\delta}\underline{X}^{\frac{\delta}{n}} \in \mathbb{C}\left\lbrace \underline{X}^{\frac{1}{n}}\right\rbrace$ we denote $$t_i = X_{i}^{\frac{1}{n}},\ 1\leq i\leq r\ \ \mbox{and}\ \ S(\underline{t}) = \sum c_{\delta}\underline{t}^{\delta}\in\mathbb{C}\{\underline{t}\}:=\mathbb{C}\{t_1,\ldots ,t_r\}.$$
This allow us to define a $\mathbb{C}$-algebra homomorphism
\begin{equation}\label{H*}\begin{array}{cccc}
		H^*: &	\mathbb{C}\{X_1,\ldots, X_{r+1}\} &\longrightarrow &  \mathbb{C}\{\underline{t}\} \\  &
		h(X_1,\ldots, X_{r+1}) &\mapsto& h(t_1^{n},\ldots, t_r^{n}, S(\underline{t}))
\end{array}\end{equation}
with $\frac{\mathbb{C}\{\underline{X}, X_{r+1}\}}{\langle f\rangle}\cong Im(H^*)=\mathbb{C}\{t_1^{n},\ldots,t _r^{n}, S(\underline{t}) \}$ where $f\in\mathbb{C}\{\underline{X}\}[X_{r+1}]$ is the minimal polynomial of $\xi$.

We call $H=H_{f}=(t_1^{n},\ldots, t_r^{n}, S(\underline{t}))$ a {\it quasi-ordinary parameterization} of $f$ or $\xi$.	

As we have mentioned in the introduction, a q.o.h. can be defined by distinct Weierstrass polynomials and consequently, we can obtain distinct quasi-ordinary parameterizations and generalized characteristic exponents as well. However, Lipman (see \cite{lipman-surfaces} and \cite{lipman-top}) proves that any irreducible q.o.h. admits a {\it normalized} quasi-ordinary parameterization, that is, a quasi-ordinary parameterization $H=(t_1^{n},\ldots, t_r^{n},S(\underline{t}))$ with $S(\underline{t})=\sum c_{\delta}\underline{t}^{\delta}$ satisfying
\begin{enumerate}
	\item  $S(\underline{t})= u(\underline{t})\cdot \underline{t}^{\lambda_{1}}$ with $u(\underline{0}) = 1$;
	\item $\lambda^{i}:=(\lambda_{1i},\ldots, \lambda_{gi})\geq_{lex}(\lambda_{1j},\ldots, \lambda_{gj})=:\lambda^{j}$ for all $1\leq i<j\leq r$;
	\item if $\lambda_{1} = (\lambda_{11}, 0,\ldots, 0)$, then $\lambda_{11} > n$,
\end{enumerate}
and any normalized quasi-ordinary parameterization of a q.o.h. $(\mathcal{X},0)$ admits the same generalized characteristic exponents. In addition, the topological type of an irreducible q.o.h. determines and it is determined by $\{n,\lambda_1,\ldots ,\lambda_g\}$ or equivalently by its  associated semigroup $\Gamma_H$. To describe $\Gamma_H$ we present some notions related to elements in $\mathbb{C}\{\underline{t}\}$.

\begin{definition}\label{dominant-vertices}
	Given $q=\sum c_{\alpha}\underline{t}^{\alpha}\in\mathbb{C}\{\underline{t}\}\setminus\{0\}$ and considering $supp(q)=\{\alpha;\ c_{\alpha}\neq 0\}\subseteq\mathbb{N}^r$ we denote by
	$\mathcal{N}(q)$ its Newton polyhedra, that is, the convex closure of the set $supp(q)+\mathbb{R}^{r}_{+}$ in $\mathbb{R}^{r}$. We indicate by $V_\mathcal{N}(q)$ the set of vertices of $\mathcal{N}(q)$.
	
	We say that $q\in\mathbb{C}\{\underline{t}\}\setminus\{0\}$ has \emph{dominant exponent} $\mathcal{V}(q)\in\mathbb{N}^r$ if $
	V_\mathcal{N}(q)=\{\mathcal{V}(q)\}$. Given $A\subseteq\mathbb{C}\{\underline{t}\}$ we write $\mathcal{V}(A):=\{\mathcal{V}(q);\  q\in A\setminus\{0\}\ \mbox{admits dominant exponent}\}$.
\end{definition}

Gonz\'alez-P\'erez in \cite{pedro} shows that
$$\Gamma_{H}:=\mathcal{V}(Im(H^*))=\{V_{\mathcal{N}}(H^*(h));\ h\in\mathbb{C}\{\underline{X},X_{r+1}\} \}\subseteq\mathbb{N}^r$$
is an additive semigroup generated by $\nu_k$, $k=1,\ldots ,r+g$ done by
\begin{equation}\label{generators}
	\nu_{j} = n\theta_{j},\ 1\leq j\leq r,\ \ \ \ \nu_{r+1} = \lambda_{1},\ \ \ \ \nu_{r+i} = n_{i-1}\nu_{r+i-1} + \lambda_{i} - \lambda_{i-1},\ 2\leq i\leq g\end{equation}
where $\{\theta_{j}, \; 1\leq j\leq r \}$ is the canonical $\mathbb{R}$-basis of $\mathbb{R}^r$ and $n_i=|Q_{i}:Q_{i-1}|$, $1\leq i\leq g$, that is, the index of the subgroup $Q_{i-1}=n\mathbb{Z}^{r}+\sum_{j=1}^{i-1}\lambda_{j}\mathbb{Z}$ in $Q_i=Q_{i-1}+\lambda_{i}\mathbb{Z}$. In particular, by (\ref{generators}) we conclude that $n\mathbb{Z}^r+\sum_{j=1}^{i}\nu_{r+j}\mathbb{Z}=Q_{i}=n\mathbb{Z}^r+\sum_{j=1}^{i}\lambda_{j}\mathbb{Z}$. In addition we get $n_1\cdot\ldots\cdot n_g=|Q_1:Q_0|\cdot\ldots\cdot |Q_{g}:Q_{g-1}|=n$.

The semigroup $\Gamma_{H}$ is called the {\it associated semigroup} of $H$.

\begin{remark}\label{std-rep}
	Given $\gamma \in Q_{k}$ for some $k \in \{0,\ldots, g\}$  there are unique $a_1,\ldots, a_{r+k} \in \mathbb {Z}$ with $0 \leq a_{r+j} < n_j$ for $j = 1,\ldots, k$, such that $\gamma=\sum_{i=1}^{r+k }a_{i}\nu_{i}$. Such a representation of $\gamma$ is called \emph{standard representation}. Moreover, if $\gamma=\sum_{i=1}^{r+k}a_{i}\nu_{i} \in Q_{k}$ is given by a standard representation, we have  $ \gamma\in \Gamma_{k} :=\langle \nu_{1},\ldots,\nu_{r+k}\rangle=\sum_{i=1}^{r+k}\nu_i\mathbb{N}$ if and only if $a_i \geq 0$ for all $1 \leq i \leq r$ (see \cite{assi}).
\end{remark}

A finner equivalence relation is the analytic equivalence. Given two q.o.h. $(\mathcal{X}_1,0), (\mathcal{X}_2,0)\subset (\mathbb{C}^{r+1},0)$, defined by Weierstrass polynomials $f_1, f_2\in\mathbb{C}\{\underline{X}\}[X_{r+1}]$, we say that they are {\it analytically equivalent} if and only if $$\mathbb{C}\{t_1^n,\ldots ,t_r^n ,S_1(\underline{t})\}\cong\frac{\mathbb{C}\{\underline{X},X_{r+1}\}}{\langle f_1\rangle}\cong \frac{\mathbb{C}\{\underline{X},X_{r+1}\}}{\langle f_2\rangle}\cong \mathbb{C}\{t_1^n,\ldots ,t_r^n ,S_2(\underline{t})\}$$ as $\mathbb{C}$-algebras, where $H_i=(t_1^n,\ldots ,t_r^n ,S_i(\underline{t}))$ is a quasi-ordinary parameterization (not necessarily normalized) of $(\mathcal{X}_i,0)$ for $i=1,2$. Considering the group $\mathcal{A}=\mbox{Iso}(\mathbb{C}^{r+1},0)\times \mbox{Iso}(\mathbb{C}^{r},0)$ where $\mbox{Iso}(\mathbb{C}^k,0)$ denotes the group
of analytic isomorphisms of $(\mathbb{C}^{k},0)$ and identifying $H_i$ with a map germ $H_i:(\mathbb{C}^r,0)\rightarrow (\mathbb{C}^{r+1},0)$, the analytical equivalence of q.o.h. can be expressed by the $\mathcal{A}$-{\it equivalence} of $H_1$ and $H_2$, that is, $H_2 =\sigma \circ H_1 \circ \rho^{-1}$ for some $(\sigma, \rho)\in\mathcal{A}$ that we denote $H_1\underset{\mathcal{A}}{\sim} H_2$.

In \cite{Marcelo-Nayene} the second author and Panek consider a subgroup $\widetilde{\mathcal{A}}$ of $\mathcal{A}$ to detect terms in a quasi-ordinary parameterization that can be eliminable by the $\widetilde{\mathcal{A}}$-action (see Proposition 2.15, Theorem 3.1 and Corollary 3.6, \cite{Marcelo-Nayene}). In this work, we introduce an invariant concerning $\widetilde{\mathcal{A}}$-action and we explore it to classify quasi-ordinary hypersurfaces in a given topological class.

For the convenience of the reader, we present the description of the group $\widetilde{\mathcal{A}}$ and some results concerning it.

\begin{definition}\label{A-tilde} Fix a topological class of an irreducible q.o.h. in $(\mathbb{C}^{r+1},\underline{0})$ determined by $\{n,\lambda_1,\ldots ,\lambda_g\}$. We denote by $\widetilde{\mathcal{A}}$ the subgroup of $\mathcal{A}$
	consisting of elements $(\sigma,\rho)\in\mathcal{A}$ with
	$\rho=(t_1(c_1+\zeta_1),\ldots ,t_r(c_r+\zeta_r))$ and
	$\sigma(\underline{X},X_{r+1})=(\sigma_1(\underline{X},X_{r+1}),\ldots
	,\sigma_{r+1}(\underline{X},X_{r+1}))$ such that
	$$\sigma_i(\underline{X},X_{r+1})=a_iX_i+P_i,\
	\ \ \ \sigma_{r+1}(\underline{X},X_{r+1})=
	a_{r+1}X_{r+1}+P_{r+1},$$
	\begin{equation}\label{cond-mud}
		\begin{array}{c}
			P_i=X_i\epsilon_i+X_{r+1}\eta_i,\ \ \ P_{r+1}=X_{r+1}\epsilon_{r+1}+\underline{X}^{\alpha}\eta_{r+1},\ \ \ \alpha=\left (\left \lceil \frac{\lambda_{11}}{n}\right \rceil,\ldots ,\left \lceil \frac{\lambda_{1r}}{n}\right \rceil\right )\ \ \mbox{and} \\
			c_i,a_i,a_{r+1}\in\mathbb{C}\setminus\{0\},\ \ \ \zeta_i\in \mathcal{M}_r,\ \ \
			\epsilon_i,\epsilon_{r+1}\in\mathcal{M}_{r+1};\\ \eta_i,\eta_{r+1}\in\mathbb{C}\{\underline{X},X_{r+1}\};\ \
			\eta_{i}=0\ \mbox{if}\ n>\lambda_{1i}
			\ \ \mbox{for}\ \ i=1,\ldots ,r, 
		\end{array}
	\end{equation}
	where $\mathcal{M}_r=\langle \underline{t}\rangle$ and
	$\mathcal{M}_{r+1}=\langle \underline{X},X_{r+1}\rangle$ denote the
	maximal ideals of $\mathbb{C}\{\underline{t}\}$ and
	$\mathbb{C}\{\underline{X},X_{r+1}\}$, respectively.	
\end{definition}

If $H_1=(t_1^n,\ldots ,t_r^n ,S_1(\underline{t}))$ is a q.o.  parameterization, then $(h_1,\ldots ,h_{r+1})=\sigma\circ H_1\circ\rho^{-1}$ is not a quasi-ordinary parameterization for any $(\sigma,\rho)\in\widetilde{\mathcal{A}}$. In fact, considering  $(\sigma,\rho)\in\widetilde{\mathcal{A}}$ as described in the above definition and denoting $\rho^{-1}=((\rho^{-1})_1,\ldots ,(\rho^{-1})_r)$, in order to obtain $h_i=t_i^n$ for $1\leq i\leq r$
we must have
$$h_i=\left ((\rho^{-1})_i(\underline{t})\right )^n.\left (a_{i}+\epsilon_{i}\circ H_1\circ\rho^{-1}(\underline{t})\right )+S_1(\rho^{-1}(\underline{t}))\cdot(\eta_i\circ H_1\circ\rho^{-1}(\underline{t}))=t_i^n=(\rho_i\circ\rho^{-1}(\underline{t}))^{n},$$
and consequently
$$
\rho_i(\underline{t})=\left ( t_i^n\cdot (a_i+\epsilon_{i}\circ H_1)+S_1(\underline{t})\cdot(\eta_{i}\circ H_1)\right )^{\frac{1}{n}}=t_i\cdot\left (a_i+\frac{H_1^*(X_i\epsilon_{i}+X_{r+1}\eta_{i})}{t_i^n}\right )^{\frac{1}{n}}.
$$
Recall that $\eta_{i}=0$ if $n>\lambda_{1i}$ (see (\ref{generators})).

The above computations show us that the elements $(\sigma, \rho)\in\widetilde{\mathcal{A}}$ for which we have that $H_2=\sigma\circ H_1\circ\rho^{-1}$ is a q.o. parameterization are
\begin{equation}\label{mud-preserva}
	\begin{array}{c}
		\sigma_i(\underline{X},X_{r+1})=a_iX_i+P_i, \ \ \ \ \sigma_{r+1}(\underline{X},X_{r+1})=
		a_{r+1}X_{r+1}+P_{r+1},\vspace{0.2cm}\\
		\rho_i(\underline{t})=t_i\cdot\left (a_i+\frac{H_1^*(P_i)}{t_i^n}\right )^{\frac{1}{n}},
	\end{array}
\end{equation}
with $a_i, a_{r+1}\in\mathbb{C}\setminus\{0\}$, $P_{i}$ and $P_{r+1}$ satisfying the conditions (\ref{cond-mud}) for $1\leq i\leq r$. 

Notice that the above elements are well behaved concerning the finite morphism $\pi: (\mathcal{X},0)\rightarrow (\mathbb{C}^r,0)$ and they are similar to the change of coordinates presented by Gonz\'alez-P\'erez (see Lemma 2.7 in \cite{pedro-tese}). However, the change of coordinates given in (\ref{mud-preserva})
depends on $H_1$ and therefore the set of such elements is not a subgroup of $\mathcal{A}$ (neither $\tilde{\mathcal{A}}$). 

The reader is invited to compare the above description with Proposition 1.2.3 of \cite{handbook} which addresses the particular case of plane curves.

\begin{definition}\label{H-A1}
	In what follows we denote
	$$\begin{array}{cc}
		\mathcal{H} = \left\{
		(\sigma, \rho) \in \tilde{\mathcal{A}};\ \sigma(\underline{X},X_{r+1}) = (a_1X_1,\ldots, a_{r+1}X_{r+1}),
		\rho(\underline{t}) = (c_1t_1,\ldots, c_rt_r),\ a_i \neq 0 \neq c_i \right\}\vspace{0.2cm}\\
		\widetilde{\mathcal{A}}_1=\left \{ 
		(\sigma, \rho) \in \tilde{\mathcal{A}};\ j^1\sigma(\underline{X},X_{r+1}) = (X_1+d_1X_{r+1},\ldots, X_r+d_rX_{r+1},X_{r+1})\ \mbox{and}\
		j^1\rho(\underline{t}) = (\underline{t})
		\right \},
	\end{array}
	$$
	where $d_i\in\mathbb{C}$ ($d_i=0$ if $n>\lambda_{1i}$) and $j^kh$ is the $k$th jet of $h$.
\end{definition}

Notice that any element $(\sigma, \rho)\in\widetilde{\mathcal{A}}$ can be express as a composition of an element $(\sigma_0,\rho_0)\in\mathcal{H}$ and an element $(\sigma_1,\rho_1)\in\widetilde{\mathcal{A}}_1$. 

Given a q.o. parameterization $H = \left(t^n_1,\cdots, t^n_r, S(\underline{t})\right)$ where $S(\underline{t})=\sum_{\delta\succeq\lambda_{1}}b_{\delta}\underline{t}^{\delta}$ we say that $\underline{t}^{\gamma}$ with $\gamma\in supp(S(\underline{t}))$ is {\it eliminable} (by $\tilde{\mathcal{A}}$-action) if there exists a q.o. parameterization $H'=(t_1^n,\ldots ,t_r^n,\sum_{\delta\succeq\lambda_{1}}c_{\delta}\underline{t}^{\delta})$ with $H'$ $\tilde{\mathcal{A}}$-equivalent to $H$ such that $c_{\gamma}=0$ and $b_{\delta}=c_{\delta}$, except eventually for $\delta\succ\gamma$.

If $H_1=(t_1^n,\ldots ,t_r^n,S_1(\underline{t}))$ is a q.o. parameterization and $(\sigma,\rho)\in\mathcal{H}$ satifies (\ref{mud-preserva}), that is, $c_i=a_i$ for $1\leq i\leq r$, then $(t_1^n,\ldots ,t_r^n,S_2(\underline{t}))=:H_2=\sigma\circ H_1\circ\rho^{-1}(\underline{t})=(t_1^n,\ldots ,t_r^n,S_1(a_1^{-1}t_1,\ldots ,a_r^{-1}t_r))$ in particular, $supp(S_2(\underline{t}))=supp(S_1(\underline{t}))$. So, elements in $\mathcal{H}$ do not introduce or eliminate terms in a q.o. parameterization but allow us to normalize some coefficients as described in the following proposition.

\begin{proposition}[Proposition 2.5 in \cite{Marcelo-Nayene}]\label{normalize}
	Let $H = \left(t^n_1,\ldots, t^n_r, S(\underline{t})\right)$ be a q.o. parameterization. If $P\subseteq \{\zeta-\lambda_{1};\ \lambda_{1}\neq\zeta\in supp(S(\underline{t}))\}\subset\mathbb{R}^{r}$ is a linearly independent set, then there exists $(\sigma, \rho) \in \mathcal{H}$ such that all terms with exponent belonging to $\{\delta+\lambda_1;\ \delta\in P\cup\{\underline{0}\}\}$ in $\sigma\circ H\circ\rho^{-1}$ are monic.
\end{proposition}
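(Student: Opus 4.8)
The plan is to translate the requirement that finitely many prescribed terms be monic into a system of monomial equations in the free parameters of an element of $\mathcal{H}$, and then to solve that system using only the linear independence of $P$.

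First I would record how $\mathcal{H}$ acts on a q.o.\ parameterization. Write $S(\underline{t})=\sum_{\delta}b_{\delta}\underline{t}^{\delta}$ and take $(\sigma,\rho)\in\mathcal{H}$, say $\sigma(\underline{X},X_{r+1})=(a_{1}X_{1},\ldots,a_{r+1}X_{r+1})$ and $\rho(\underline{t})=(c_{1}t_{1},\ldots,c_{r}t_{r})$. As in the computation following \eqref{mud-preserva}, for $\sigma\circ H\circ\rho^{-1}$ to remain a q.o.\ parameterization one needs $a_{i}=c_{i}^{\,n}$ for $1\le i\le r$, and then a short direct computation gives $\sigma\circ H\circ\rho^{-1}=\bigl(t_{1}^{n},\ldots,t_{r}^{n},\widetilde{S}(\underline{t})\bigr)$ with $\widetilde{S}(\underline{t})=\sum_{\delta}a_{r+1}\bigl(\prod_{i=1}^{r}c_{i}^{-\delta_{i}}\bigr)b_{\delta}\,\underline{t}^{\delta}$. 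Thus each coefficient $b_{\delta}$ is merely rescaled by the Laurent monomial $a_{r+1}\prod_{i}c_{i}^{-\delta_{i}}$ and no term is created or destroyed (this also gives the auxiliary assertion used before the proposition, that elements of $\mathcal{H}$ neither introduce nor eliminate terms).

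Next I would set up and solve the equations. Every $\delta\in P$ has the form $\zeta-\lambda_{1}$ with $\zeta\in supp(S(\underline{t}))$, so $\delta+\lambda_{1}\in supp(S(\underline{t}))$ and $b_{\delta+\lambda_{1}}\neq 0$; moreover $b_{\lambda_{1}}\neq 0$ since $\lambda_{1}$ is a characteristic exponent. Hence, for $\delta\in P\cup\{\underline{0}\}$, requiring the term of exponent $\delta+\lambda_{1}$ in $\widetilde{S}$ to be monic amounts to $a_{r+1}\prod_{i=1}^{r}c_{i}^{-(\delta_{i}+\lambda_{1i})}=b_{\delta+\lambda_{1}}^{-1}$. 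Passing to a branch of the complex logarithm and setting $x_{0}=\log a_{r+1}$, $x_{i}=\log c_{i}$, this becomes the finite linear system $x_{0}-\sum_{i=1}^{r}(\delta_{i}+\lambda_{1i})x_{i}=-\log b_{\delta+\lambda_{1}}$, indexed by $\delta\in P\cup\{\underline{0}\}$, in the unknowns $(x_{0},\ldots,x_{r})\in\mathbb{C}^{r+1}$. The coefficient vectors $(1,-(\delta+\lambda_{1}))\in\mathbb{C}^{r+1}$ are linearly independent: subtracting the $\delta=\underline{0}$ row from the others leaves the vectors $(0,-\delta)$, $\delta\in P$, which are independent by hypothesis, and these together with $(1,-\lambda_{1})$ form an independent family. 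Since $|P|\le r$, the system is consistent; exponentiating a solution yields $a_{r+1},c_{1},\ldots,c_{r}\in\mathbb{C}\setminus\{0\}$, and putting $a_{i}:=c_{i}^{\,n}$ produces $(\sigma,\rho)\in\mathcal{H}$ (all the entries $P_{i},\epsilon_{i},\eta_{i},\zeta_{i}$ of Definition \ref{A-tilde} being zero, its conditions hold vacuously) for which $\sigma\circ H\circ\rho^{-1}$ is a q.o.\ parameterization with the required monic terms. One may also avoid logarithms: the torus homomorphism $(c,a_{r+1})\mapsto\bigl(a_{r+1}\prod_{i}c_{i}^{-(\delta_{i}+\lambda_{1i})}\bigr)_{\delta\in P\cup\{\underline{0}\}}$ is surjective, since its dual has the linearly independent columns $(1,-(\delta+\lambda_{1}))$ and $\mathbb{C}\setminus\{0\}$ is divisible, so $\bigl(b_{\delta+\lambda_{1}}^{-1}\bigr)_{\delta}$ lies in its image.

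I expect no genuine obstacle: the one substantive step is precisely the rank computation above, which converts the monicity conditions into a solvable linear system and is where the linear independence of $P$ is used. The remaining points that need a little care are keeping the constraint $a_{i}=c_{i}^{\,n}$ that preserves the q.o.\ form, noting that the differences $\zeta-\lambda_{1}$ lie in $\mathbb{Z}^{r}$ so that the monomial maps are well defined, and observing that adjoining $\underline{0}$ to $P$ does not spoil independence of the coefficient vectors — it only pins down $a_{r+1}$ once the $c_{i}$ have been chosen.
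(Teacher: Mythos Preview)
Your proof is correct. The paper does not prove this proposition --- it is quoted from \cite{Marcelo-Nayene}, preceded only by the informal remark that elements of $\mathcal{H}$ rescale coefficients without altering the support --- so there is no in-paper argument to compare against. Your computation of the $\mathcal{H}$-action (including the constraint $a_i=c_i^{\,n}$ needed to keep the q.o.\ form, and the resulting rescaling $b_\delta\mapsto a_{r+1}\prod_i c_i^{-\delta_i}b_\delta$) is accurate, and the reduction to a linear system in the logarithms of $a_{r+1},c_1,\ldots,c_r$, solved by the full-row-rank argument coming from the linear independence of $P$, is the natural proof and is sound in every detail.
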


Let $\Omega^r:=\Omega^r_{\mathbb{C}\{\underline{X},X_{r+1}\}}=\sum_{i=1}^{r+1}\mathbb{C}\{\underline{X},X_{r+1}\}dX_{1}\wedge\cdots\wedge\widehat{dX_{i}}\wedge\cdots\wedge dX_{r+1}$ be the $\mathbb{C}\{\underline{X},X_{r+1}\}$-module of differential $r$-forms and the map
\begin{eqnarray}\label{Psi}
	\Psi_H : \Omega^{r}&\rightarrow& \mathbb{C}\{\underline{t}\}\\ \nonumber
	\omega & \mapsto &  \frac{t_1\cdots t_r}{dt_1\wedge \cdots \wedge dt_r}\left (\sum_{i=1}^{r+1}H^{*}(h_i)\bigwedge_{j=1;j\neq i}^{r+1}dH^{*}(X_j)\right )
\end{eqnarray}
where $\omega=\sum_{i=1}^{r+1}h_{i}dX_{1}\wedge\cdots\wedge\widehat{dX_{i}}\wedge\cdots\wedge dX_{r+1}$ and $H^*$ is the homomorphism given in (\ref{H*}).

Similarly to the semigroup $\Gamma_H$ and considering the notion of dominant exponent given in Definition \ref{dominant-vertices} we put
$$
	\Lambda_H:=\mathcal{V}(Im(\Psi_H)).
$$

The set $\Lambda_H$ was introduced in \cite{Marcelo-Nayene} considering the $\frac{\mathbb{C}\{\underline{X},X_{r+1}\}}{\langle f\rangle}$-module K\"ahler $r$-forms for a quasi-ordinary hypersurface defined by $f$ with parameterization $H$. It follows that $\Lambda_{H}$ is a $\Gamma_{H}$-monomodule, that is, $\Gamma_{H}+\Lambda_{H}\subset \Lambda_{H}$. 

We can use $\Lambda_{H}$ to identify terms in a quasi ordinary parameterization that are amenable to elimination through coordinate changes as given in Definition \ref{A-tilde}, that is, considering the $\tilde{\mathcal{A}}$-action. More specifically we highlight:

\begin{proposition}[Corollary 3.6 in \cite{Marcelo-Nayene}]\label{elimination}
	Let $H = \left(t^n_1,\cdots, t^n_r, S(\underline{t})\right)$ be a q.o. parameterization.
	If $\gamma= \mathcal{V}(\Psi_H(\omega))\in \Lambda_{H}$ with $\gamma_{i}\geq n$ and $\omega=\sum_{i=1}^{r+1}(-1)^{r+1-i}P_idX_{1}\wedge\cdots\wedge\widehat{dX_{i}}\wedge \cdots\wedge dX_{r+1} \in \Omega^{r}$ where $P_i$ is as described in (\ref{cond-mud}) for all $i=1,\ldots,r$, then  $\underline{t}^{\gamma-\underline{n}}$ is eliminable (by $\tilde{\mathcal{A}}_1$-action).
\end{proposition}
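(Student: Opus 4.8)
The plan is to read off from $\omega$ an explicit element of $\tilde{\mathcal{A}}_1$ whose action on the parameterization changes $S$ only in degrees $\succeq\gamma-\underline{n}$ and can be tuned to kill the coefficient of $\underline{t}^{\gamma-\underline{n}}$. Write $\omega=\sum_{i=1}^{r+1}(-1)^{r+1-i}P_i\,dX_1\wedge\cdots\wedge\widehat{dX_i}\wedge\cdots\wedge dX_{r+1}$ with all $P_i$ of the shape (\ref{cond-mud}), pick a scalar $\tau\in\mathbb{C}$ to be fixed later, and let $(\sigma,\rho)$ be the element of $\mathcal{A}$ given by $\sigma_i=X_i+\tau P_i$ for $1\le i\le r$, $\sigma_{r+1}=X_{r+1}+\tau P_{r+1}$, and $\rho$ determined from these by (\ref{mud-preserva}) (so $a_1=\cdots=a_{r+1}=1$ there). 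Since $H^*(P_i)/t_i^{n}\in\mathcal{M}_r$---which is exactly why $\rho$ is well defined and tangent to the identity---and since $\eta_i=0$ whenever $n>\lambda_{1i}$, one checks from Definitions \ref{A-tilde} and \ref{H-A1} that $(\sigma,\rho)\in\tilde{\mathcal{A}}_1$; and by the discussion preceding (\ref{mud-preserva}), $H':=\sigma\circ H\circ\rho^{-1}=(t_1^n,\ldots,t_r^n,S'(\underline{t}))$ is again a q.o.\ parameterization with $H\underset{\tilde{\mathcal{A}}_1}{\sim}H'$. So everything reduces to locating $supp(S'-S)$ and the coefficient of $\underline{t}^{\gamma-\underline{n}}$ in it.

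The computational heart of the matter is the identity $S'-S=\tfrac{\tau}{n^{r}}\,\underline{t}^{-\underline{n}}\Psi_H(\omega)+R$, where $R$ collects terms whose exponents are strictly larger than $\gamma-\underline{n}$ in the product order $\preceq$. On one hand, expanding the last coordinate of $\sigma\circ H\circ\rho^{-1}$ via (\ref{mud-preserva}) gives $S'(\underline{t})=S(\rho^{-1}(\underline{t}))+\tau\,H^*(P_{r+1})(\rho^{-1}(\underline{t}))$; since $\rho$ is tangent to the identity with $\rho_k(\underline{t})-t_k=\tfrac{\tau}{n}H^*(P_k)/t_k^{n-1}+(\text{higher order})$, a first-order Taylor expansion yields $S'-S=\tfrac{\tau}{n}\bigl(-\sum_{k=1}^{r}\tfrac{H^*(P_k)}{t_k^{n-1}}\partial_{t_k}S+n\,H^*(P_{r+1})\bigr)+R$. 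On the other hand, one computes $\Psi_H(\omega)$ from (\ref{Psi}): for $i\le r$ the wedge $\bigwedge_{j\ne i}dH^*(X_j)$ already contains every $dt_j$ with $j\ne i$, so only the $dt_i$-part of $dH^*(X_{r+1})=dS$ survives, and after multiplying by $\tfrac{t_1\cdots t_r}{dt_1\wedge\cdots\wedge dt_r}$ the $i$-th summand equals $-n^{r-1}\underline{t}^{\underline{n}}t_i^{-(n-1)}H^*(P_i)\,\partial_{t_i}S$ (the minus sign being forced by the $(-1)^{r+1-i}$ in $\omega$ together with the permutation sign $(-1)^{r-i}$), while the $i=r+1$ summand gives $n^{r}\underline{t}^{\underline{n}}H^*(P_{r+1})$; hence $\Psi_H(\omega)=n^{r-1}\underline{t}^{\underline{n}}\bigl(-\sum_{k=1}^{r}\tfrac{H^*(P_k)}{t_k^{n-1}}\partial_{t_k}S+n\,H^*(P_{r+1})\bigr)$, and comparing the two expressions proves the identity. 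Because $\gamma=\mathcal{V}(\Psi_H(\omega))$ we have $supp(\Psi_H(\omega))\subseteq\gamma+\mathbb{N}^r$, so (using $\gamma_i\ge n$) $\underline{t}^{-\underline{n}}\Psi_H(\omega)\in\mathbb{C}\{\underline{t}\}$ has dominant exponent $\gamma-\underline{n}$ and there a nonzero coefficient $c_\gamma$; hence $supp(S'-S)\subseteq(\gamma-\underline{n})+\mathbb{N}^r$ and the coefficient of $\underline{t}^{\gamma-\underline{n}}$ in $S'-S$ is $\tfrac{\tau}{n^{r}}c_\gamma$. If $\underline{t}^{\gamma-\underline{n}}$ does not occur in $S$ there is nothing to do; otherwise, writing $b$ for its coefficient and taking $\tau=-n^{r}b/c_\gamma$, the term $\underline{t}^{\gamma-\underline{n}}$ disappears from $S'$ while every coefficient $c_\delta$ with $\delta\not\succ\gamma-\underline{n}$ still equals $b_\delta$; this is precisely the assertion that $\underline{t}^{\gamma-\underline{n}}$ is eliminable by the $\tilde{\mathcal{A}}_1$-action. (One also checks that the constraints built into the $P_i$ and $P_{r+1}$, in particular $\lambda_1\notin n\mathbb{Z}^r$, force $\gamma-\underline{n}\succ\lambda_1$, so the leading term $\underline{t}^{\lambda_1}$ of $S$ is never affected and $H'$ is again normalized.)

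The step I expect to be the main obstacle is the control of the remainder $R$: one must show that every contribution beyond the displayed first-order term---the $\tau^2,\tau^3,\dots$ corrections coming from $\rho^{-1}$, the second- and higher-order Taylor remainders of $S\circ\rho^{-1}$, and the defect $H^*(P_{r+1})(\rho^{-1}(\underline{t}))-H^*(P_{r+1})(\underline{t})$---has all of its Newton exponents strictly above $\gamma-\underline{n}$. The mechanism is that each such correction carries at least one extra factor lying in $\mathcal{M}_r$ (ultimately because $H^*(P_k)/t_k^{n}\in\mathcal{M}_r$ and $\rho^{-1}-\mathrm{id}$ is divisible by such a factor), so its exponent exceeds that of the corresponding first-order term by a nonzero element of $\mathbb{N}^r$; turning this into a rigorous statement is a careful but routine chase of the Newton polyhedra of the derivatives $\partial^{m}S$, of the $H^*(P_i)$, and of the iterates of $\rho^{-1}$, and this is where essentially all of the work lies.
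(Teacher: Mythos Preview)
The paper does not contain its own proof of this proposition: it is quoted verbatim as Corollary~3.6 of \cite{Marcelo-Nayene} and used as a black box throughout Sections~3--5. So there is no in-paper argument to compare against.

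Your approach is the standard one and is correct. The key identity
\[
\Psi_H(\omega)=n^{r-1}\underline{t}^{\underline{n}}\Bigl(-\sum_{k=1}^{r}\tfrac{H^*(P_k)}{t_k^{\,n-1}}\,\partial_{t_k}S+n\,H^*(P_{r+1})\Bigr)
\]
is exactly right (the sign bookkeeping with $(-1)^{r+1-i}$ and the permutation sign $(-1)^{r-i}$ is done correctly), and it matches the first-order variation of $S$ under the one-parameter family $\sigma_i=X_i+\tau P_i$, $\rho$ as in (\ref{mud-preserva}). This is precisely the mechanism behind the cited result in \cite{Marcelo-Nayene}: one reads off from $\omega$ an infinitesimal $\tilde{\mathcal{A}}_1$-deformation whose leading effect on $S$ is $\tfrac{\tau}{n^{r}}\underline{t}^{-\underline{n}}\Psi_H(\omega)$, and then solves for $\tau$.

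Two small remarks. First, to conclude $(\sigma,\rho)\in\tilde{\mathcal{A}}_1$ you need $j^1\sigma_{r+1}=X_{r+1}$, i.e.\ that $P_{r+1}=X_{r+1}\epsilon_{r+1}+\underline{X}^{\alpha}\eta_{r+1}$ has no linear part; this uses $|\alpha|\ge 2$, which follows from the normalization convention (either $\lambda_{12}>0$, forcing $\alpha_1,\alpha_2\ge 1$, or $\lambda_{11}>n$, forcing $\alpha_1\ge 2$). Second, your honest assessment of the remainder $R$ is accurate: each higher-order contribution carries an extra factor of $H^*(P_k)/t_k^{n}\in\mathcal{M}_r$ (or an iterate thereof), hence its Newton support sits strictly above that of the first-order term. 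Making this precise is indeed routine Newton-polyhedron bookkeeping, and is where the work in \cite{Marcelo-Nayene} lies; your sketch captures the mechanism correctly.
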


In the next section, we present an $\tilde{\mathcal{A}}$-invariant that can be easily computed using a q.o. parameterization.

\section {Generalized Zariski exponents}

Considering an irreducible plane curve $C$ with parameterization $(t^n,\sum_{i\geq \lambda_{1}}a_it^i)$ and associated semigroup $\Gamma=\langle v_0:=n,v_1:=\lambda_{1},v_2,\ldots ,v_g\rangle$, Zariski (see \cite{zariski-inv}, \cite{zariski-book} or \cite{handbook}) shows that there exists a curve  analytically equivalent to $C$ with parameterization 
$$
	\left (t^n,t^{\lambda_{1}}+\sum_{i>\lambda_{1}}a'_it^i\right )\ \mbox{with}\ i\not\in\Gamma\bigcup\{\mathbb{N}\lambda_{1}+2\lambda_1-n\} 
$$
that he calls a {\it short parameterization}. 
In addition, given a short parameterization as before, Zariski shows that if $supp\left ( \sum_{i>\lambda_{1}} a'_it^i\right )\neq\emptyset$ then the minimum element of this set is an analytical invariant called {\it Zariski exponent} or {\it Zariski invariant}. Notice that $supp\left ( \sum_{i>\lambda_{1}} a'_it^i\right )=\emptyset$ if and only if the plane curve $C$ is analytically equivalent to a curve with parameterization $(t^{n},t^{\lambda_{1}})$, that is, it is defined by the quasi-homogeneous polynomial $Y^{n}-X^{\lambda_{1}}$.

In this section, we propose a generalization of the concepts of short parameterization and Zariski exponent for quasi-ordinary hypersurfaces.

\begin{proposition}\label{normalize-prop}
	Let $H=(t_1^n,\ldots ,t_r^n,S(\underline{t}))$  be a normalized q.o. parameterization with $S(\underline{t})=t^{\lambda_{1}}+\sum_{\delta\succ\lambda_{1}}b_{\delta}\underline{t}^{\delta}$ and associated semigroup $\Gamma$. Given $\gamma\in supp\left (\sum_{\delta\succ\lambda_{1}}b_{\delta}\underline{t}^{\delta} \right )$ if
	\begin{equation}\label{exp-eliminaveis}
		\gamma \in \Gamma \bigcup_{1\leq i\leq r\atop \lambda_{1i}\geq n}\left( \Gamma+2\lambda_1-\nu_i \right )
	\end{equation}
	then $\underline{t}^{\gamma}$ is eliminable by $\tilde{\mathcal{A}}_1$-action.
\end{proposition}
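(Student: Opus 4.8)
The plan is to split the argument into the two cases corresponding to the two pieces of the union in \eqref{exp-eliminaveis}, and in each case exhibit a differential $r$-form $\omega$ of the type required by Proposition~\ref{elimination} whose image under $\Psi_H$ has dominant exponent exactly $\gamma+\underline{n}$ (so that $\underline{t}^{(\gamma+\underline{n})-\underline{n}}=\underline{t}^{\gamma}$ is eliminable). First I would recall the basic computation of $\Psi_H$ on the ``coordinate'' forms: for $\omega_j=(-1)^{r+1-j}X_jdX_1\wedge\cdots\wedge\widehat{dX_j}\wedge\cdots\wedge dX_{r+1}$ one gets, after expanding the determinant in \eqref{Psi} and using $dH^*(X_i)=nt_i^{n-1}dt_i$ for $i\leq r$ and $dH^*(X_{r+1})=dS$, that $\Psi_H(\omega_j)$ is $n^{r-1}$ times $t_1^n\cdots t_r^n$ times a combination of partial derivatives of $S$; in particular for $j=r+1$ one obtains $\Psi_H(\omega_{r+1})=n^r\,t_1^n\cdots t_r^n\cdot\frac{1}{n}(\text{something})$ and more usefully, taking the form $dX_1\wedge\cdots\wedge dX_r$ (the $h_{r+1}=1$ form) gives $\Psi_H(\,dX_1\wedge\cdots\wedge dX_r\,)=n^r t_1^n\cdots t_r^n$ up to a unit, so that $\underline{n}\in\Lambda_H$. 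Combining with the fact, recalled in the excerpt, that $\Lambda_H$ is a $\Gamma_H$-monomodule, we get $\Gamma+\underline{n}\subseteq\Lambda_H$.

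For the first case, $\gamma\in\Gamma$: write $\gamma$ via its standard representation and realize $\underline{t}^\gamma$ as $\mathcal{V}(H^*(h))$ for a suitable monomial-like $h\in\mathbb{C}\{\underline{X},X_{r+1}\}$ (this is exactly how $\Gamma_H=\mathcal{V}(Im(H^*))$ is generated). Then $\gamma+\underline{n}=\mathcal{V}(\Psi_H(\omega))$ where $\omega=h\cdot dX_1\wedge\cdots\wedge dX_r$, which is of the form demanded in Proposition~\ref{elimination} with $P_{r+1}$ absorbing the factor $h$ (one must check $h$ can be taken in $\mathcal{M}_{r+1}$, which holds since $\gamma\succ\lambda_1\succ\underline{0}$, so $h$ is a nonunit) and $P_i=0$ for $i\leq r$. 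Since $(\gamma+\underline{n})_i=\gamma_i+n\geq n$, Proposition~\ref{elimination} applies and yields that $\underline{t}^\gamma$ is eliminable.

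For the second case, $\gamma\in\Gamma+2\lambda_1-\nu_i$ for some $i$ with $\lambda_{1i}\geq n$: here the relevant form is built from $dX_{r+1}$, mimicking Zariski's classical term $2\lambda_1-n$ for plane curves (which the excerpt explicitly flags). Concretely I would take $\omega$ to involve $\eta_{r+1}\underline{X}^{\alpha}$ or, better, a form whose $\Psi_H$-image picks up a factor $\partial S/\partial t_i\sim t_i^{-1}t^{\lambda_1}$ together with another factor $\sim t^{\lambda_1}$ coming from an $S$-monomial in one of the slots, producing dominant exponent $2\lambda_1-n\theta_i$ plus the $\Gamma$-translate; the hypothesis $\lambda_{1i}\geq n$ is precisely what guarantees the resulting exponent has $i$th coordinate $2\lambda_{1i}-n\geq n$ (and the other coordinates are automatically $\geq n$ after adding the $\Gamma$-part, which lies in $\underline{n}+\mathbb{N}^r$), so that again the coordinate-wise bound $\gamma+\underline{n}\succeq\underline{n}$ needed by Proposition~\ref{elimination} is met and the form is of the allowed type with $\eta_i$ in the $i$th slot (allowed since $\lambda_{1i}\geq n$ means $\eta_i$ need not vanish).

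The main obstacle I anticipate is the bookkeeping in the second case: one must produce a single form $\omega$ whose $\Psi_H$-image has a genuine \emph{dominant} exponent (a single Newton vertex) equal to $\gamma+\underline{n}$, rather than merely having $\gamma+\underline{n}$ in its support, and one must verify that the cancellation of lower-order contributions from $S=t^{\lambda_1}+\cdots$ does not destroy dominance — this is where the normalization $S=t^{\lambda_1}(1+\cdots)$ and the characteristic-exponent structure (via Remark~\ref{std-rep} and the description of $\Gamma_H$) must be used carefully. A secondary subtlety is checking the coordinate conditions in \eqref{cond-mud} on the chosen $P_i,\eta_i,\eta_{r+1}$, in particular that $\eta_i=0$ is only required when $n>\lambda_{1i}$, which is exactly excluded in the second case by the hypothesis $\lambda_{1i}\geq n$.
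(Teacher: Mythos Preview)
Your proposal is correct and follows essentially the same route as the paper: for $\gamma\in\Gamma$ the paper takes $\omega=P_{r+1}\,dX_1\wedge\cdots\wedge dX_r$ with $\mathcal{V}(H^*(P_{r+1}))=\gamma$, and for $\gamma=\delta+2\lambda_1-\nu_i$ (with $\delta\in\Gamma$, $\lambda_{1i}\geq n$) it takes $\omega=P_i\,dX_1\wedge\cdots\wedge\widehat{dX_i}\wedge\cdots\wedge dX_r\wedge dX_{r+1}$ with $P_i=X_{r+1}\eta_i$ and $\mathcal{V}(H^*(\eta_i))=\delta$, obtaining $\Psi_H(\omega)=\pm\,n^{r-1}\,H^*(X_{r+1}\eta_i)\cdot\underline{t}^{\,\underline{n}-\nu_i}\cdot\bigl(\lambda_{1i}\underline{t}^{\lambda_1}+\sum_{\delta'\succ\lambda_1}\delta'_i b_{\delta'}\underline{t}^{\delta'}\bigr)$. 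Your anticipated dominance obstacle is not one: since $S=\underline{t}^{\lambda_1}(1+\cdots)$ and $\lambda_{1i}\geq n>0$, each of the three factors above has a single Newton vertex (namely $\delta+\lambda_1$, $\underline{n}-\nu_i$, and $\lambda_1$), hence so does their product, with vertex $\gamma+\underline{n}$.
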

\begin{proof}
	Let us consider $\omega=P_{r+1}dX_1\wedge\cdots\wedge dX_r$ with $\mathcal{V}(P_{r+1})=\gamma\in\Gamma$. Notice that $\gamma\succ\lambda_{1}$ implies that $P_{r+1}$ satisfies (\ref{cond-mud}). As $\Psi_H(\omega)=n^r\cdot H^*(P_{r+1})\cdot \underline{t}^{\underline{n}}$, we get $\mathcal{V}(\Psi_H(\omega))=\gamma+\underline{n}$ and, by Proposition \ref{elimination}, the term $\underline{t}^{\gamma}$ is eliminable  by $\tilde{\mathcal{A}}_1$-action.
	
	On the other hand, if $\lambda_{1i}\geq n$ and $\gamma=\delta+2\lambda_{1}-\nu_i$ with $\delta\in \Gamma$, we take $$\omega = P_idX_1\wedge\cdots \wedge\widehat{dX_{i}}\wedge\cdots\wedge dX_{r}\wedge dX_{r+1}\ \ \mbox{with}\ \  
	P_i=X_{r+1}\eta_i\ \ \mbox{and}\ \ \mathcal{V}(\eta_{i})=\delta.$$ In this way, 
	$$\Psi_H(\omega)= n^{r-1}(-1)^{r-i}\cdot H^*(X_{r+1}\eta_i)\cdot\underline{t}^{\underline{n}-\nu_i}\cdot\left ( \lambda_{1i}\underline{t}^{\lambda_{1}}+\sum_{\delta\succ\lambda_{1}}\delta_ib_{\delta}\underline{t}^{\delta}\right ),$$
	that is, $\gamma=\mathcal{V}(\Psi_H(\omega))-\underline{n}$ and, by Proposition \ref{elimination}, the term $\underline{t}^{\gamma}$ is eliminable  by $\tilde{\mathcal{A}}_1$-action.
\end{proof}

By the previous proposition, given a normalized q.o. parameterization $H$ we can consider (possibly infinitely many) changes of coordinates and we obtain $H'=(t_{1}^{n}, \ldots, t_{r}^{n}, S(\underline{t}))$ such that any exponent in $S(\underline{t})-\underline{t}^{\lambda_1}$ does not belong to the union of sets (\ref{exp-eliminaveis}). We avoid verifying if a composition of infinite many of elements of $\mathcal{A}$ is analytic, for this reason, when we consider eventually infinite many changes of coordinates to obtain $H'$ from $H$, we say that $H'$ is {\it formally $\mathcal{A}$-equivalent} to $H$.

\begin{example}\label{n=2}
	Let us consider a normalized q.o. parameterization $H=(t_1^2,\ldots ,t_r^2,\underline{t}^{\lambda_1}+\sum_{\gamma\succ\lambda_1}a_{\gamma}\underline{t}^{\gamma})$. As $2=|Q_g:Q_0|=n_1\cdot\ldots\cdot n_g$, we must have $g=1$, that is, the associated semigroup of $H$ is $\Gamma=\langle \nu_1,\ldots ,\nu_r,\nu_{r+1}\rangle$. By Remark \ref{std-rep}, we can write any $\gamma\in Q_1$ as $\gamma=2(a_1,\ldots ,a_r)+a\cdot\nu_{r+1}$ with $0\leq a<n_1=n=2$ and $a_i\in\mathbb{Z}$ for $1\leq i\leq r$. In this way, if $\gamma\succ\lambda_1=\nu_{r+1}$ we must have $a_i\geq 0$ for every $1\leq i\leq r$, that is, $\gamma\in\Gamma$ and consequently, by Proposition \ref{normalize-prop}, $H$ is formally $\mathcal{A}$-equivalent to $(t_1^2,\ldots ,t_r^2,\underline{t}^{\lambda_1})$.
\end{example}

\begin{example}\label{normal}
	If a q.o. parameterization $H$ has $\lambda_{1}=\underline{1}=(1,\ldots ,1)$ and $n>1$ then, as $n_1\lambda_1\in Q_0=n\mathbb{Z}^r$, we must have $n_1=|Q_1:Q_0|=n=n_1\cdot\ldots\cdot n_g$, that is $g=1$ and the value semigroup of $H$ is given by $\Gamma=\langle \nu_1,\ldots ,\nu_r,\nu_{r+1}\rangle$. By Remark \ref{std-rep}, any $\gamma\in Q_1$ can be expressed as $\gamma=n(a_1,\ldots ,a_r)+a\cdot\nu_{r+1}$ with $0\leq a<n$ and $a_i\in\mathbb{Z}$ for $1\leq i\leq r$. In order to $\gamma\succ\lambda_{1}=\nu_{r+1}$ we get $a_i\geq 0$ for all $1\leq i\leq r$, that is, $\gamma\in\Gamma$. In this way, by Proposition \ref{normalize-prop}, $H$ is formally $\mathcal{A}$-equivalent to $(t_1^n,\ldots , t_r^n,t_1\cdot\ldots \cdot t_r)$.  
\end{example}

The Propositon \ref{normalize-prop} motivates the following definition.

\begin{definition}\label{zariski-exponents}
	Let $H=(t_{1}^{n}, \ldots, t_{r}^{n}, t^{\lambda_{1}}+\sum_{\delta\succ\lambda_{1}}a_{\delta}\underline{t}^{\delta})$ be a normalized q.o. parameterization with value semigroup $\Gamma$. We say that $H$ is a \emph{quasi-short parameterization} if every element in $supp(\sum_{\gamma\succ\lambda_{1}}a_{\delta}\underline{t}^{\delta})$ does not belong to $\Gamma \bigcup_{1\leq i\leq r\atop \lambda_{1i}\geq n}\left( \Gamma+2\lambda_1-\nu_i \right )$. 
	
	If $H=(t_{1}^{n}, \ldots, t_{r}^{n},t^{\lambda_{1}}+\sum_{\delta\succ\lambda_{1}}a_{\delta}\underline{t}^{\delta})$ is a quasi-short parameterization and $\sum_{\delta\succ\lambda_{1}}a_{\delta}\underline{t}^{\delta}\neq 0$ the we call \emph{Generalized Zariski exponents} the elements in
	$E_{\mathcal{Z}}(H):=\min_{\preceq} supp(\sum_{\delta\succ\lambda_{1}}a_{\delta}\underline{t}^{\delta})$. If $\sum_{\delta\succ\lambda_{1}}a_{\delta}\underline{t}^{\delta}=0$ we put $E_{\mathcal{Z}}(H)=\{\underline{\infty}\}$.
\end{definition}

It follows that any q.o. parameterization with generalized Zariski exponents $E_{\mathcal{Z}}$ is formally $\mathcal{A}$-equivalent to a quasi-short parameterization 
$$
H=\left (t_{1}^{n}, \ldots, t_{r}^{n}, \underline{t}^{\lambda_{1}}\right ) \ \ \mbox{if}\ \ E_{\mathcal{Z}}=\{\underline{\infty}\}\ \ \ \ \ \mbox{or}\ \ \ \ \ H=\left (t_{1}^{n}, \ldots, t_{r}^{n},\underline{t}^{\lambda_1}+\sum_{\delta\in E_{\mathcal{Z}}}a_{\delta}\underline{t}^{\delta}u_{\delta}(\underline{t})\right ) \ \ \mbox{if}\ \ E_{\mathcal{Z}}\neq\{\underline{\infty}\}
$$
where $a_{\delta}\in\mathbb{C}\setminus\{0\}$ and $u_{\delta}(\underline{0})=1$. 

\begin{remark}\label{lambda2}
	If $g\geq 2$, that is, $\lambda_2\in supp(S(\underline{t}))$ then $\lambda_2\not \in \Gamma \bigcup_{1\leq i\leq r\atop \lambda_{1i}\geq n}\left( \Gamma+2\lambda_1-\nu_i \right)$. In fact, by (\ref{generators}), we get $\lambda_2=\nu_{r+2}+\nu_{r+1}-n_1\lambda_1$, as $2\leq n_1=|Q_1:Q_0|$ it follows that $n_1\lambda_1=n\alpha$ with $\alpha=(\alpha_1,\ldots ,\alpha_r)\in(\mathbb{N}^r)^*:=\mathbb{N}^r\setminus\{\underline{0}\}$ and, by Remark \ref{std-rep}, we conclude that $\lambda_2\not\in\Gamma$. On the other hand, suppose that $\lambda_{1i}\geq n$, so $\alpha_i\geq 2$. If $\lambda_2\in\Gamma+2\lambda_1-\nu_i$, that is, $\lambda_2-\lambda_1+\nu_i\in\Gamma+\lambda_1\subset\Gamma$ then, as $\nu_{r+1}=\lambda_1$, we get
	$$\nu_{r+2}-n(\alpha_1,\ldots ,\alpha_{i-1},\alpha_i-1,\alpha_{i+1},\ldots, \alpha_r)=\nu_{r+2}-n\alpha+\nu_i=\lambda_2-\lambda_1+\nu_i\in\Gamma$$
	that, by Remark \ref{std-rep}, is an absurd. So, $\lambda_2\not \in \Gamma \bigcup_{1\leq i\leq r\atop \lambda_{1i}\geq n}\left( \Gamma+2\lambda_1-\nu_i \right)$ and for $g\geq 2$ we always have $E_{\mathcal{Z}}(H)\neq\{\underline{\infty}\}$.
\end{remark}

Notice that, for $r=1$, that is, if $H=(t^n,S(t))$ is a parameterization of a plane curve we have $\lambda_{1}>n$ and $\mathbb{N}\lambda_{1}+2\lambda_1-n\subset\Gamma+2\lambda_1-n$ so, we get the short parameterization notion introduced by Zariski.

\begin{theorem}\label{teorema}
	The generalized Zariski exponents are $\widetilde{\mathcal{A}}$-invariant.
\end{theorem}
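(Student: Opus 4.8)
The plan is to show that the set $E_{\mathcal{Z}}(H)$ does not change when we replace a quasi-short parameterization $H$ by any $\widetilde{\mathcal{A}}$-equivalent quasi-short parameterization. Since every normalized q.o.\ parameterization is formally $\mathcal{A}$-equivalent to a quasi-short one, and any $(\sigma,\rho)\in\widetilde{\mathcal{A}}$ factors (by the remark after Definition \ref{H-A1}) as a composition of an element of $\mathcal{H}$ and an element of $\widetilde{\mathcal{A}}_1$, it suffices to analyse these two types of changes separately. The element of $\mathcal{H}$ is harmless: as observed before Proposition \ref{normalize}, an element of $\mathcal{H}$ that sends one q.o.\ parameterization to another acts by $S(\underline{t})\mapsto S(a_1^{-1}t_1,\ldots,a_r^{-1}t_r)$, hence preserves $supp(S(\underline{t}))$ and in particular the quasi-short condition and the set of minimal exponents of $S(\underline{t})-\underline{t}^{\lambda_1}$. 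So the whole content is to control the action of $\widetilde{\mathcal{A}}_1$.

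First I would set up the induction on the partial order. Let $H=(t_1^n,\ldots,t_r^n,\underline{t}^{\lambda_1}+\sum_{\delta\succ\lambda_1}a_\delta\underline{t}^\delta)$ and $H'=(t_1^n,\ldots,t_r^n,\underline{t}^{\lambda_1}+\sum_{\delta\succ\lambda_1}a'_\delta\underline{t}^\delta)$ be two quasi-short parameterizations with $H'=\sigma\circ H\circ\rho^{-1}$, $(\sigma,\rho)\in\widetilde{\mathcal{A}}_1$, so $j^1\sigma=(X_1+d_1X_{r+1},\ldots,X_r+d_rX_{r+1},X_{r+1})$ and $j^1\rho=\underline{t}$. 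The key computation is to expand $H'$ using the formulas (\ref{mud-preserva}): for $1\leq i\leq r$, $\rho_i(\underline{t})=t_i(a_i+H_1^*(P_i)/t_i^n)^{1/n}$ with here $a_i=1$, and the last coordinate of $H'$ is $a_{r+1}S(\rho^{-1}(\underline{t}))+P_{r+1}(\rho^{-1}(\underline{t})^n,\ldots,S(\rho^{-1}(\underline{t})))$ with $a_{r+1}=1$ and $j^1$ of the rest trivial. I would track, term by term in the partial order $\preceq$, how the lowest new exponents are produced. The claim to establish is: if $\mu$ is $\preceq$-minimal in $supp(S(\underline{t})-\underline{t}^{\lambda_1})$ (i.e.\ $\mu\in E_{\mathcal{Z}}(H)$), then $a'_\delta=a_\delta$ for every $\delta$ with $\delta\preceq\mu$ or $\delta\nsucc\mu$ and $\delta\prec$ everything in $E_{\mathcal{Z}}(H)$ — more precisely that the ``truncation'' of $S$ below and at the minimal locus is preserved — so that $E_{\mathcal{Z}}(H')=E_{\mathcal{Z}}(H)$. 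Concretely, any modification of a coefficient $a_\delta$ coming from $\rho$ or $\sigma$ either lands in $\Gamma$ (from the $\epsilon_i,\epsilon_{r+1}$ terms, which produce exponents in $\lambda_1+\Gamma$ essentially as in Example \ref{n=2}-type arguments and in Proposition \ref{normalize-prop}), or lands in one of the sets $\Gamma+2\lambda_1-\nu_i$ with $\lambda_{1i}\geq n$ (from the $X_{r+1}\eta_i$ terms, exactly as in the second half of the proof of Proposition \ref{normalize-prop}), or is a $\succ$-strictly-larger perturbation coming from the Taylor expansion of the non-linear parts. Since $H$ and $H'$ are both quasi-short, no exponent of $S(\underline{t})-\underline{t}^{\lambda_1}$ or $S'(\underline{t})-\underline{t}^{\lambda_1}$ can lie in $\Gamma\cup\bigcup_{\lambda_{1i}\geq n}(\Gamma+2\lambda_1-\nu_i)$, so all such ``algebraically forced'' cancellations are vacuous, and what remains is that the first genuinely new exponent is unchanged.

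The main obstacle I anticipate is the bookkeeping of the non-linear contributions in the last coordinate: the substitution $\underline{t}\mapsto\rho^{-1}(\underline{t})$ inside $S(\underline{t})=\underline{t}^{\lambda_1}+\cdots$ mixes $\lambda_1$ with the perturbations $H_1^*(P_i)/t_i^n$, and $P_{r+1}$ evaluated at the parameterization contributes terms whose exponents must be shown to be either $\succ$ the minimal exponent $\mu$, or to lie in the forbidden (hence absent) sets. The cleanest way to handle this is to work modulo the ideal generated by $\{\underline{t}^\delta : \delta\succ\mu \text{ for some }\mu\in E_{\mathcal{Z}}(H)\}$, i.e.\ to truncate everything at the minimal-support level, reducing the identity $H'=\sigma\circ H\circ\rho^{-1}$ to a finite, linear-in-the-relevant-coefficients statement; there the argument is essentially the one already used in Proposition \ref{normalize-prop} run backwards, plus the observation (as in Remark \ref{lambda2}) that when $g\geq 2$ the exponent $\lambda_2$ is never in the forbidden union. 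A secondary subtlety is that $E_{\mathcal{Z}}(H)$ may contain several incomparable exponents; the induction must be carried out simultaneously over all $\preceq$-minimal elements, which is routine once the truncation is in place. Finally, one concludes that $E_{\mathcal{Z}}$ is well-defined on $\widetilde{\mathcal{A}}$-classes, and since formal $\mathcal{A}$-equivalence only adds changes of coordinates of the type analysed here, the generalized Zariski exponents are a genuine $\widetilde{\mathcal{A}}$-invariant.
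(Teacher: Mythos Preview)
Your overall strategy coincides with the paper's: reduce to $\widetilde{\mathcal{A}}_1$ via the factorisation $\widetilde{\mathcal{A}}=\mathcal{H}\cdot\widetilde{\mathcal{A}}_1$, expand the identity $S_1(\underline{t})+P_{r+1}(H_1(\underline{t}))=S_2(\rho(\underline{t}))$, and argue that no element of $E_{\mathcal{Z}}(H_1)$ can be killed. The paper does exactly this. However, your trichotomy ``any modification lands in $\Gamma$, or in $\Gamma+2\lambda_1-\nu_i$, or is $\succ$-strictly larger'' is where the real difficulty hides, and as stated it is not correct.

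The problematic contributions come from expanding $\prod_i\rho_i^{\lambda_{1i}}$ with $\rho_i=t_i\bigl(1+H_1^*(\epsilon_i)+H_1^*(X_{r+1}\eta_i)/t_i^n\bigr)^{1/n}$. The binomial series produces, for each $\gamma\in V_{\mathcal{N}}(H_1^*(\eta_i))\subset\Gamma$ and each $k\geq 1$, a term with exponent $\lambda_1+k(\gamma+\lambda_1-\nu_i)$. For $k=1$ this is $\gamma+2\lambda_1-\nu_i\in\Gamma+2\lambda_1-\nu_i$, as you say. But for $k\geq 2$ these exponents are \emph{not} automatically in $\Gamma\cup\bigcup_i(\Gamma+2\lambda_1-\nu_i)$, nor are they automatically $\succ$ some element of $E_{\mathcal{Z}}(H_1)$; they can in principle coincide with a generalized Zariski exponent. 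The paper handles this by a three-case analysis on the location of $\gamma+2\lambda_1-\nu_i$: in Case~1 ($\in E+(\mathbb{N}^r)^*$) all iterates are harmless; in Case~2 ($\notin\Gamma\cup(E+(\mathbb{N}^r)^*)$) one uses that \emph{both} sides are quasi-short to force the corresponding coefficient of $\eta_i$ to vanish (a feedback argument, not a ``vacuous cancellation''); and in Case~3 ($\in\Gamma$) one needs a genuine computation with the standard representation (Remark~\ref{std-rep}) and the divisibility $n_1\lambda_1\in n\mathbb{N}^r$ to show that $\lambda_1+k(\gamma+\lambda_1-\nu_i)\in\Gamma$ for every $k\geq 1$. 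Your truncation idea does not bypass this: the $k\geq 2$ exponents survive truncation precisely when they are $\preceq$-comparable to a Zariski exponent, which is the case you must rule out. So the missing ingredient is exactly this case analysis, in particular the arithmetic argument of Case~3.
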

\begin{proof}
	Let $H_{1}$ and $H_2$ be two quasi-short parameterization with same semigroup $\Gamma=\langle \nu_1,\ldots ,\nu_{r+g}\rangle$. Let us suppose that $H_1$ and $H_2$ are $\widetilde{\mathcal{A}}$-equivalent and they are given by
	$$
	H_{1}=\left (t_{1}^{n},\ldots,t_{r}^{n},S_1(\underline{t})\right ) \ \ \ \mbox{and}\ \ \  H_{2}=\left (t_{1}^{n},\ldots,t_{r}^{n},S_2(\underline{t})\right )
	$$
	with 	
	$S_1(\underline{t})=\underline{t}^{\lambda_{1}}+
	\sum_{\delta\in E} a_{\delta}\underline{t}^{\delta}u_1(\underline{t})$, $S_2(\underline{t})=\underline{t}^{\lambda_{1}}+
	\sum_{\delta\in E} b_{\delta}\underline{t}^{\delta}u_2(\underline{t})$ where
	$u_1(\underline{0})=u_2(\underline{0})=1$, $a_{\delta}\neq 0$ for every $\delta\in E$ and $E:=E_{\mathcal{Z}}(H_1)\neq\{\underline{\infty}\}$ is the set of generalized Zariski exponents of $H_1$.
	
	Considering $(\sigma,\rho) \in \widetilde{\mathcal{A}}$ such that $\sigma\circ H_{1}\circ \rho^{-1}=H_{2}$, or equivalent $\sigma\circ H_{1}=H_2\circ\rho$, we will show that $b_{\delta}\neq 0$ for every $\delta\in E$ and consequently, the generalized Zariski exponents are $\widetilde{\mathcal{A}}$ invariants.
	
	As we have remarked after the Definition \ref{H-A1} it is sufficient to consider $(\sigma,\rho)\in\tilde{\mathcal{A}}_1$, that is, 
	$$\begin{array}{c}
		\sigma_i(\underline{X},X_{r+1})=X_i+P_i(\underline{X},X_{r+1}),\ \ \ \ \sigma_{r+1}(\underline{X},X_{r+1})=
		X_{r+1}+P_{r+1}(\underline{X},X_{r+1}),\vspace{0.2cm}\\
		\rho_i(\underline{t})=t_i\cdot\left (1+\frac{H_1^*(P_i(\underline{X},X_{r+1}))}{t_i^n}\right )^{\frac{1}{n}},
	\end{array}
	$$
	with $P_{r+1}(\underline{X},X_{r+1})=X_{r+1}\cdot \epsilon_{r+1}+\underline{X}^{\alpha}\cdot \eta_{r+1}$ and $P_i(\underline{X},X_{r+1})=X_i\cdot \epsilon_i+X_{r+1}\cdot \eta_i$, $1\leq i\leq r$ satisfying the conditions (\ref{cond-mud}).
	
	Notice that 
	$$\sigma\circ H_1\circ\rho^{-1}(\underline{t})=(t_1^n,\ldots ,t_r^n,S_1\circ\rho^{-1}(\underline{t})+P_{r+1}\circ H_1\circ\rho^{-1}(\underline{t}))=(t_1^n,\ldots ,t_r^n,S_2(\underline{t}))=H_2(\underline{t}),$$
	that is, we must have \begin{equation}\label{equivalencia}S_1(\underline{t})+P_{r+1}(H_1(\underline{t}))=S_2(\rho(\underline{t})).
	\end{equation}
	
	As $P_{r+1}(\underline{X},X_{r+1})=X_{r+1}\cdot \epsilon_{r+1}+\underline{X}^{\alpha}\cdot \eta_{r+1}$ with $\epsilon_{r+1}\in\mathcal{M}_{r+1}$, $\eta_{r+1}\in\mathbb{C}\{\underline{X},X_{r+1}\}$ and $\alpha=\left (\left \lceil \frac{\lambda_{11}}{n}\right \rceil,\ldots ,\left \lceil \frac{\lambda_{1r}}{n}\right \rceil\right )$ we have that any element in $supp(P_{r+1}(H_1(\underline{t}))$ belongs to $\Gamma$ or $E+(\mathbb{N}^r)^*$, that is, all terms $a_j\underline{t}^{\delta_{j}}$ with $\delta_j\in E$ remain unchanged on left side of (\ref{equivalencia}). In addition, as $	\rho_i(\underline{t})=t_i\cdot\left (1+\frac{H_1^*(P_i(\underline{X},X_{r+1}))}{t_i^n}\right )^{\frac{1}{n}},$
	where $P_i(\underline{X},X_{r+1})=X_i\cdot \epsilon_i+X_{r+1}\cdot \eta_i$, $\epsilon_{i}\in\mathcal{M}_{r+1}$, $\eta_i\in\mathbb{C}\{\underline{X},X_{r+1}\}$ for $1\leq i\leq r$ with $\eta_i=0$ if $n>\lambda_{1i}$ we get
	$$S_2(\rho(\underline{t}))=\prod_{i=1}^{r}\rho_i^{\lambda_{1i}}+\sum_{\delta\in E}b_{\delta}\underline{t}^{\delta}v_j(\underline{t})$$
	with $v_i(\underline{0})=1$. As  $P_i(\underline{X},X_{r+1})=X_i\cdot \epsilon_i+X_{r+1}\cdot \eta_i$ we can assume that $\eta_i\not\in\langle X_i\rangle$.
	
	We will show that $supp\left ( \prod_{i=1}^{r}\rho_i^{\lambda_{1i}}\right )\cap E=\emptyset$, that is, $a_{\delta}=b_{\delta}$ for all $\delta\in E:=E_{\mathcal{Z}}(H_1)$.
	
	Using the description of $\rho_i$ for $1\leq i\leq r$ we have
$$
		\prod_{i=1}^{r}\rho_i^{\lambda_{1i}}=\underline{t}^{\lambda_{1}}\cdot\prod_{1\leq i\leq r\atop \lambda_{1i}<n}\left ( 1+H_1^*(\epsilon_{i})\right )^{\frac{\lambda_{1i}}{n}}\cdot\prod_{1\leq i\leq r\atop \lambda_{1i}\geq n}\left ( 1+H_1^*(\epsilon_{i})+\frac{H_1^{*}(X_{r+1}\eta_i)}{t_i^{n}}\right )^{\frac{\lambda_{1i}}{n}}.
$$
	
	We analyse $supp\left ( \prod_{i=1}^{r}\rho_i^{\lambda_{1i}}\right )$ considering in the above expression the expansion $(1+z(\underline{t}))^{\alpha}=\sum_{k\geq 0}\left ( \alpha\atop k\right )z^k$ for any $\alpha\in\mathbb{Q}_{> 0}$ and $z(\underline{t})\in\langle \underline{t}\rangle=\mathcal{M}_{r}\subset \mathbb{C}\{\underline{t}\}$. 
	
	If $\eta_i=0$ for every $1\leq i\leq r$, then $supp\left ( \prod_{i=1}^{r}\rho_i^{\lambda_{1i}}\right )\subset \Gamma\cup\left ( E+(\mathbb{N}^r)^*\right )$ because $\epsilon_{i}\in\mathcal{M}_{r+1}$. By definition $\Gamma\cap E=\emptyset$ so, in order to have (\ref{equivalencia}) we must have $a_{\delta}=b_{\delta}$ for every $\delta\in E$ and we can not eliminate any generalized Zariski exponent by $\widetilde{\mathcal{A}}$-action.
	
	If there exists $\eta_i\neq 0$, that can happen for $\lambda_{1i}\geq n$ then, in 
	$supp\left ( \prod_{i=1}^{r}\rho_i^{\lambda_{1i}}\right )$, besides the elements in $\Gamma\cup\left ( E+(\mathbb{N}^r)^*\right )$ we eventually obtain elements on form $\lambda_1+k(\gamma+\lambda_1-\nu_i)$ where $\gamma\in V_{\mathcal{N}}(H_1^*(\eta_i))\subset\Gamma$ for $k\geq 1$. 
	
	It is sufficient to analyze the possibility $\gamma+\lambda_{1}-\nu_i\not\in\Gamma$ with $\gamma\in\Gamma_1=\langle \nu_1,\ldots ,\nu_{r+1}\rangle$. In fact, if $\gamma+\lambda_1-\nu_i\in\Gamma$ then $\lambda_1+k(\gamma+\lambda_{1}-\nu_i)\in\Gamma$ for every $k\geq 1$, consequently $supp\left ( \prod_{i=1}^{r}\rho_i^{\lambda_{1i}}\right )\subset \Gamma$. On the other hand if $g\geq 2$ and $\gamma=\sum_{i=1}^{r+g}\alpha_i\nu_i$ with $\alpha_j\neq 0$ for some $j>r+1$ then it follows, by Remark \ref{lambda2} and the inequality $\lambda_{1i}\geq n$, that $\lambda_1+k(\gamma +\lambda_1-\nu_i)\in E+(\mathbb{N}^r)^*$. In both situations we get
	$\lambda_1+k(\gamma +\lambda_1-\nu_i)\in\Gamma\cup\left ( E+(\mathbb{N}^r)^*\right )$ and similarly to the case $\eta_{i}=0$ we conclude that $a_{\delta}=b_{\delta}$ for every $\delta \in E$.
	
	Considering $\gamma+\lambda_{1}-\nu_{i}\not\in\Gamma$ with $\gamma\in\langle \nu_1,\ldots ,\nu_{r+1}\rangle$ we may have the possibilities:
	\begin{itemize}
		\item[Case 1:]$\gamma+2\lambda_{1}-\nu_i\in E+(\mathbb{N}^r)^*$. In this case, $\lambda_{1}+k(\gamma+\lambda_{1}-\nu_i)\in E+(\mathbb{N}^r)^*$ for any $k\geq 1$ and $supp\left ( \prod_{i=1}^{r}\rho_i^{\lambda_{1i}}\right )\subset\Gamma \cup (E+(\mathbb{N}^r)^*)$ so, $a_{\delta}=b_{\delta}$ for every $\delta\in E$.
		\item[Case 2:] $\gamma+2\lambda_1-\nu_i\not\in\Gamma\cup (E+(\mathbb{N}^r)^*)$. 
		
		As $\delta\not\in \bigcup_{1\leq i\leq r\atop \lambda_{1i}\geq n}\left( \Gamma+2\lambda_1-\nu_i \right )$ for every $\delta\in E_{\mathcal{Z}}(H_1)$ and we do not have any element in $\Gamma+2\lambda_1-\nu_i$ on the left hand of (\ref{equivalencia}), we can not have the value $\gamma$ in $V_{\mathcal{N}}(H_1^*(\eta_i))$ and consequently, for such $\gamma$ we can not have $\lambda_1+k(\gamma+\lambda_1-\nu_i)\in supp\left ( \prod_{i=1}^{r}\rho_i^{\lambda_{1i}}\right )$ for any $k\geq 1$.
		
		\item[Case 3:] $\gamma+2\lambda_1-\nu_i\in\Gamma\setminus (E_{\mathcal{Z}}(H_1)+(\mathbb{N}^r)^*)$. 
		
		As $\gamma\in\langle \nu_1,\ldots ,\nu_{r+1}\rangle$ and $\gamma+\lambda_1-\nu_i\not\in\Gamma$ we may write $\gamma=\alpha_{r+1}\nu_{r+1}+\sum_{j=1\atop j\neq i}^r\alpha_j\nu_j$ with $0\leq \alpha_{r+1}\leq n_1$ and $\alpha_j\in\mathbb{N}$ for $j\in\{1,\ldots , i-1,i+1,\ldots ,r\}$. The condition $\gamma+2\lambda_1-\nu_i=(\alpha_{r+1}+2)\nu_{r+1}+\sum_{j=1\atop j\neq i}^r\alpha_j\nu_j-\nu_1\in\Gamma$ implies that $\alpha_{r+1}+2=sn_1$ for some $s\in\mathbb{N}\setminus\{0\}$, recall that $n_1\nu_{r+1}=n_1\lambda_1\in n\mathbb{N}^r$. In this way, as $n_1\geq 2$ and $\lambda_{1i}\geq n$ we get
		\begin{equation}\label{conta}
			\gamma+2\lambda_1-\nu_i=(\alpha_{r+1}+2)\nu_{r+1}+\sum_{j=1\atop j\neq i}^r\alpha_j\nu_j-\nu_1=n(\mu_1,\ldots ,\mu_r)\in n\mathbb{N}^r\ \ \mbox{with}\ \ \mu_i\geq 1.
		\end{equation}
		
		Now, for any $k\geq 1$ we express $k=2q+p$ with $q\in\mathbb{N}$, $p\in\{0,1\}$ and we obtain
		$$\begin{array}{ll}
			\lambda_1+k(\gamma +\lambda_1-\nu_i) & = \lambda_1+p\cdot (\gamma+\lambda_1-\nu_i)+q\cdot\gamma+q\cdot (\gamma+2\lambda_1-\nu_i)-q\cdot\nu_i \\
			& = \lambda_1+p\cdot (\gamma+\lambda_1-\nu_i)+q\cdot\gamma+q\cdot n\cdot (\mu_1,\ldots ,\mu_r)-q\cdot \nu_i.	
		\end{array}$$
		As $\lambda_1+p\cdot (\gamma+\lambda_1-\nu_i)\in\Gamma$ for $p\in\{0,1\}$ and, by (\ref{conta}), $\mu_i\geq 1$ we conclude (in this case) that $\lambda_1+k(\gamma+\gamma_1-\nu_i)\in\Gamma$ for every $k\geq 1$. Consequently, with the same argument for the case $\eta_i=0$ we conclude that $a_{\delta}=b_{\delta}$ for every $\delta\in E_{\mathcal{Z}}(H_1)$.
	\end{itemize}
\end{proof}

\begin{remark}\label{unicidade-coef-zariski}
	Notice that the proof of the previous theorem reveals that terms with generalized Zariski exponents cannot be eliminated by changes of coordinates belonging to the group $\widetilde{\mathcal{A}}$ and changes of coordinates belonging to $\widetilde{\mathcal{A}}_1$ keep the coefficients of such terms unchanged.
\end{remark}

By Theorem \ref{teorema} and the above remark, we can rewrite the Definition \ref{zariski-exponents} as:
\vspace{0.2cm}

\noindent{\bf Definition \ref{zariski-exponents}'.} {\it Let  $H = \left(t^n_1,\cdots, t^n_r, S(\underline{t})\right)$ be a q.o. parameterization. The set of \emph{generalized Zariski exponents} of $H$ is
	$$E_{\mathcal{Z}}(H)=\min_{\preceq}\left \{supp(S(\underline{t}))\setminus\left ( \Gamma \bigcup_{1\leq i\leq r\atop \lambda_{1i}\geq n} \left( \Gamma +2\lambda_{1}-\nu_i \right)\right )\right \}$$  
	where $\preceq$ is the product order in $\mathbb{N}^r$ and $\min_{\preceq}\{\emptyset\}=\{\underline{\infty}\}$.}
\vspace{0.2cm}

In the next section, we will consider the notion of generalized Zariski exponents to explore the $\tilde{\mathcal{A}}$-equivalence of quasi-ordinary surfaces. We will use some technical lemmas and they will be presented in Section \ref{lemas} to make the presentation more fluid.

\section{Quasi-simple surface singularities}

In this section, we will illustrate how the Zariski exponents can be used to study the $\tilde{\mathcal{A}}$-equivalence of quasi-ordinary hypersurface. In particular, we characterize quasi ordinary surfaces with one characteristic exponent that admits a countable number of distinct $\tilde{\mathcal{A}}$-classes in the same topological class. 

\begin{definition}\label{quasi-simple}
	We say that a quasi-ordinary normalized parameterization $H$ is {\emph quasi-simple} if there is a countable number of distinct $\tilde{\mathcal{A}}$-classes for q.o.h. in the same topological class of $H$, that is, we have a countable moduli.
\end{definition}

Notice that, by Example \ref{n=2} and Example \ref{normal}, any normalized q.o. parameterization with $n=2$ or $\lambda_{1}=\underline{1}=(1,\ldots ,1)$ is formally $\tilde{\mathcal{A}}$-equivalent to $(t_1^2,\ldots ,t_r^2,\underline{t}^{\lambda_{1}})$, respectively $(t_1^n,\ldots ,t_r^n,t_1\cdot\ldots\cdot t_r)$, and consequently they are quasi-simple.

In the rest of this section, we consider quasi-ordinary surfaces with one generalized characteristic exponent and $n>2$. In particular, its topological class is completely characterized by its associated semigroup which, in this case, is given by $\Gamma=\langle \nu_{1}=(n,0),\nu_{2}=(0,n),\nu_{3}=\lambda_{1}=(\lambda_{11},\lambda_{12})\rangle$.

By Proposition \ref{normalize-prop}, any quasi-ordinary surface with semigroup $\Gamma$ is formally $\tilde{\mathcal{A}}$-equivalent to a quasi-short parameterization (see Definition \ref{zariski-exponents})
\begin{equation}\label{surface}
	H=\left(t_{1}^{n}, t_{2}^{n}, \underline{t}^{\lambda_{1}}+\sum_{\delta\in E_{\mathcal{Z}}(H)}a_{\delta}\underline{t}^{\delta}u_{\delta}(\underline{t})\right)
\end{equation}
where $E_{\mathcal{Z}}(H)$ denotes the generalized Zariski exponents and $u_{\delta}(\underline{0})=1$.
In addition, Proposition \ref{normalize} allows us to normalize at most two coefficients of terms corresponding to generalized Zariski exponents, that is, if there exist $\delta_{1},\delta_{2} \in E_{\mathcal{Z}}(H)$ with $\{\delta_1-\lambda_{1},\delta_2-\lambda_{1}\}$ linearly independent, then $H$ is $\tilde{\mathcal{A}}$-equivalent to a q.o. parameterization as (\ref{surface}) with $a_{\delta_1}=a_{\delta_2}=1$.
By Remark \ref{unicidade-coef-zariski}, if there is $\delta\in E_{\mathcal{Z}}(H)\setminus\{\delta_1,\delta_2\}$ then for any $a_{\delta}\in\mathbb{C}$ we obtain a q.o. parameterization in a distinct $\tilde{\mathcal{A}}_1$-class, that is, we get an uncountable distinct orbits with respect to the $\tilde{\mathcal{A}_1}$ group. As the unique homothety that keeps the coefficient of $\underline{t}^{\lambda_1},\  \underline{t}^{\delta_1}$ and $\underline{t}^{\delta_2}$ equal to $1$ is the identity, we conclude that $H$ is not quasi-simple.

Remark that if $H$ is quasi-simple then $H$ admits at most two generalized Zariski exponents but the converse is not true as we illustrate in the following example.

\begin{example}\label{notquasisimple}
Consider the quasi-ordinary normalized parameterization $H_c=(t_1^5,t_2^5,S_c(t_1,t_2))$ where $S_c(t_1,t_2)=t_1^5t_2+t_1^5t_2^8+ct_1^5t_2^9+t_1^{10}t_2^4$. The associated semigroup to $H_c$ is given by $\Gamma=\langle (5,0),(0,5),\lambda_{1}=(5,1)\rangle$ and generalized Zariski exponents $E_{\mathcal{Z}}(H)=\{\delta_1=(5,8),\delta_2=(10,4)\}$ for any $c\in\mathbb{C}$. We will show that $H_a$ is $\tilde{\mathcal{A}}_1$-equivalent to $H_b$ if and only if $a=b$, that is, we get an uncountable distinct orbits with respect to the $\tilde{\mathcal{A}_1}$ group. As the unique homothety that keeps the coefficient of $\underline{t}^{\lambda_1},\  \underline{t}^{\delta_1}$ and $\underline{t}^{\delta_2}$ equal to $1$ is the identity, we get an uncountable distinct orbits with respect to the $\tilde{\mathcal{A}}$ group and consequently, $H_c$ is not quasi-simple.

Let us consider $(\sigma,\rho) \in \widetilde{\mathcal{A}}_1$ such that $\sigma\circ H_{a}\circ \rho^{-1}=H_{b}$.

According to presented in Section 2, since $\lambda_{11}=5$ and $\lambda_{12}=1$, we get
$$\sigma_{3}(X_1,X_2,X_3)=
X_{3}+P_{3},\ \ \ \sigma_i(X_1,X_2,X_3)=X_i+P_i,\ \ \ \rho_i(t_1,t_2)=t_i\cdot\left (1+\frac{H_a^*(P_i)}{t_i^5}\right )^{\frac{1}{5}},\ \ i=1,2
$$
with $P_1=X_1\cdot\epsilon_1+X_3\cdot\eta_1,$ $P_2=X_2\cdot \epsilon_2, P_{3}=X_{3}\cdot \epsilon_{3}+X_1X_2\cdot \eta_{3}$, such that $\epsilon_1,\epsilon_2,\epsilon_3\in\mathcal{M}_3$ and $\eta_1,\eta_3\in\mathbb{C}\{X_1,X_2,X_3\}$.

Similarly to developed in the proof of Theorem \ref{teorema}, we must have
$S_a+H_a^*(P_{3})=S_b(\rho)$
and any element in $supp(H_a^*(P_3))$ belongs to $\Gamma$, $(5,9)+(\mathbb{N}^3)^*$ or $(10,4)+(\mathbb{N}^3)^*$.

Notice that for any $(k,j)\in\mathbb{N}^2$ we have
$$\begin{array}{ll}
\rho_1^{5k}\rho_2^j & =t_1^{5k}t_2^j\cdot\left ( 1+\frac{H_a^*(P_1)}{t_1^5}\right )^k\cdot\left ( 1+\frac{H_a^*(P_2)}{t_2^5}\right )^{\frac{j}{5}}\\ 
& =t_1^{5k}t_2^j\cdot\left ( 1+H_a^*(\epsilon_1)+(t_2+t_2^8+at_2^9+t_1^5t_2^4)\cdot H_a^*(\eta_1)\right )^k\cdot\left ( 1+H_a^*(\epsilon_2)\right )^{\frac{j}{5}}.\end{array}$$ 
Since $\epsilon_1,\epsilon_2\in\mathcal{M}_3$ and denoting $H^*_a(\eta_1)(0,0)=A_0$ we get
\begin{equation}\label{exp1}\begin{array}{ll}
S_b(\rho)= & t_1^5t_2\cdot \left ( 1+H_a^*(\epsilon_1)+(t_2+t_2^8)\cdot H_a^*(\eta_1)\right )\cdot \left ( 1+H_a^*(\epsilon_2)\right )^{\frac{1}{5}}+\vspace{0.2cm}\\ & +t_1^5t_2^8\cdot (1+A_0t_2)+bt_1^5t_2^9\cdot u_1(t_1,t_2)+t_1^{10}t_2^{4}\cdot u_2(t_1,t_2)\end{array}\end{equation}
where $u_1(0,0)=u_2(0,0)=1$.

If $A_0\neq 0$ then we have $(5,2)\in supp(S_b(\rho))$ and $(5,2)\not\in supp(S_a+H^*_a(P_3))$ that is a contradiction. So, $A_0=0$ and in this way we can rewrite (\ref{exp1}) as
$$S_b(\rho)= t_1^5t_2\cdot \left ( 1+H_a^*(\epsilon_1)+(t_2+t_2^8)\cdot H_a^*(\eta_1)\right )\cdot \left ( 1+H_a^*(\epsilon_2)\right )^{\frac{1}{5}} +t_1^5t_2^8+bt_1^5t_2^9\cdot u_1(t_1,t_2)+t_1^{10}t_2^{4}\cdot u_2(t_1,t_2)$$
with $\eta_1\in\mathcal{M}_3$. Since $(0,8)\not\in supp\left ( \left ( 1+H_a^*(\epsilon_1)+(t_2+t_2^8)\cdot H_a^*(\eta_1)\right )\cdot \left ( 1+H_a^*(\epsilon_2)\right )^{\frac{1}{5}}\right )$ for any $\eta_1,\epsilon_1,\epsilon_2\in\mathcal{M}_3$, the equality $S_a+H^*_a(P_3)=S_b(\rho)$ implies that $a=b$ and consequently $H_c$ is not quasi-simple.
\end{example}

\begin{lemma}\label{3-zariskis}
	Let $H=(t_{1}^{n}, t_{2}^{n}, S(t_{1},t_{2}))$ be a quasi-short parameterization with unique characteristic exponent $\lambda_1=(\lambda_{11},\lambda_{12})$. If
	$$
	1)\ \lambda_{11}\geq\frac{4n}{n-2}\ \ \ \ \ \ \text{or}\ \ \ \ \ \ 2)\ \lambda_{12}\geq\frac{2n}{n-2} \ \ \ \ \ \ \text{or}\ \ \ \ \ \
	3)\ \lambda_{12}\geq\frac{n}{n-2}\ \mbox{and}\ \lambda_{11}\geq\frac{3n}{n-2}
	$$
	then $H$ can admit three generalized Zariski exponents.
\end{lemma}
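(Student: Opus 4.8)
The statement is an existence claim: under each numerical hypothesis on $\lambda_1=(\lambda_{11},\lambda_{12})$, there is a quasi-short parameterization with $\Gamma=\langle(n,0),(0,n),(\lambda_{11},\lambda_{12})\rangle$ admitting three generalized Zariski exponents. By Definition \ref{zariski-exponents}', an exponent $\delta\in\mathbb{N}^2$ can be a generalized Zariski exponent exactly when $\delta\succ\lambda_1$ and $\delta\notin\Gamma\cup(\Gamma+2\lambda_1-\nu_1)\cup(\Gamma+2\lambda_1-\nu_2)$, where the second (resp. third) forbidden set is only present when $\lambda_{11}\ge n$ (resp. $\lambda_{12}\ge n$); and being a \emph{generalized Zariski exponent of $H$} further requires $\delta$ to be minimal in $\mathrm{supp}(S(\underline t))\setminus(\text{forbidden set})$. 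So the whole proof reduces to a combinatorial search: exhibit three elements $\delta^{(1)},\delta^{(2)},\delta^{(3)}\in\mathbb{N}^2$ that are (i) $\succ\lambda_1$, (ii) outside the forbidden union, and (iii) pairwise $\preceq$-incomparable (so that all three are simultaneously minimal), and then set $S(\underline t)=\underline t^{\lambda_1}+\underline t^{\delta^{(1)}}+\underline t^{\delta^{(2)}}+\underline t^{\delta^{(3)}}$; one checks this $S$ is a genuine quasi-ordinary branch with the prescribed characteristic data (here $g=1$, so the only constraint from Lipman's criterion in Section 2 is $\lambda_1\notin n\mathbb{Z}^2$, which the hypotheses $\lambda_{1i}\ge\frac{n}{n-2}>1$ and the normalization guarantee, together with $\gcd$ of $n$ and all support entries being $1$).

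The key step is therefore the explicit choice of the three exponents, engineered so the numerical thresholds $\frac{kn}{n-2}$ appear. The natural candidates are small perturbations of $2\lambda_1$: elements of the form $2\lambda_1-\nu_i+\nu_j+(\text{small correction})$ that dodge all three lattice-shifted copies of $\Gamma$. For instance, along the lines of Remark \ref{lambda2} and Example \ref{notquasisimple}, exponents like $\lambda_1+(a,b)$ with $0<a<n$, $0<b<n$ and $(a,b)\notin\{\lambda_1,\ 2\lambda_1-\nu_1,\ 2\lambda_1-\nu_2\}$ modulo $n\mathbb{Z}^2$ land outside $\Gamma$; one then has to separately rule out membership in $\Gamma+2\lambda_1-\nu_1$ and $\Gamma+2\lambda_1-\nu_2$, which is where the inequalities bite. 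Writing a putative relation $\delta=\gamma+2\lambda_1-\nu_i$ with $\gamma\in\Gamma$ and reducing mod $n$ (using $n\lambda_1\in\Gamma_0$ is false here since $g=1$; rather $\lambda_1\notin n\mathbb{Z}^2$, so a standard representation $\gamma=n(a_1,a_2)+a\nu_3$ with $0\le a<n$ exists), one gets a system of inequalities on the coefficients that forces $\lambda_{1i}$ below the stated bound — so if $\lambda_{1i}$ meets the bound, no such relation exists and $\delta$ survives. The three cases $1)$, $2)$, $3)$ correspond to which of the three candidate exponents can be pushed far enough while staying pairwise incomparable: case $1)$ needs $\lambda_{11}$ large alone, case $2)$ needs $\lambda_{12}$ large alone, and case $3)$ is a mixed regime where both coordinates must clear smaller thresholds simultaneously.

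The main obstacle I anticipate is the bookkeeping of the incomparability condition (iii) \emph{together with} avoidance of all three shifted semigroups \emph{simultaneously} for all three chosen exponents: it is easy to find one or two exponents outside the forbidden set, but arranging a pairwise $\preceq$-antichain of size three whose three members each clear all the lattice obstructions is a tighter constraint, and it is exactly this that forces the jump from the single-exponent bound to bounds like $\frac{3n}{n-2}$ or $\frac{4n}{n-2}$. A secondary technical point is verifying that the chosen $S(\underline t)$ is quasi-short in the strong sense of Definition \ref{zariski-exponents} — i.e.\ that \emph{none} of its support elements (not just the three $\delta^{(j)}$) lies in the forbidden union; but since $S$ has only three terms beyond $\underline t^{\lambda_1}$ by construction, this is immediate once (i)--(ii) are checked for those three. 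I would organize the write-up as three short sub-arguments, one per hypothesis, each beginning "Take $\delta^{(1)}=\dots,\ \delta^{(2)}=\dots,\ \delta^{(3)}=\dots$" followed by the (routine but case-dependent) verification that these form an antichain outside $\Gamma\cup(\Gamma+2\lambda_1-\nu_1)\cup(\Gamma+2\lambda_1-\nu_2)$, invoking Definition \ref{zariski-exponents}' to conclude $E_{\mathcal Z}(H)=\{\delta^{(1)},\delta^{(2)},\delta^{(3)}\}$.
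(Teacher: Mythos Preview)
Your high-level plan is exactly the paper's: exhibit, in each of the three regimes, an explicit $\preceq$-antichain $\{\delta^{(1)},\delta^{(2)},\delta^{(3)}\}$ of exponents satisfying $\delta^{(j)}\succ\lambda_1$ and $\delta^{(j)}\notin\Gamma\cup\bigcup_{\lambda_{1i}\ge n}(\Gamma+2\lambda_1-\nu_i)$, and then take $S(\underline t)=\underline t^{\lambda_1}+\sum_j\underline t^{\delta^{(j)}}$. The verification that such an $H$ is a genuine quasi-short parameterization is immediate, as you note.

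Where your proposal falls short is precisely at the step you flag as the ``key step'': the actual choice of the three exponents. Your suggested candidates --- ``small perturbations of $2\lambda_1$'' of the shape $2\lambda_1-\nu_i+\nu_j$, or elements $\lambda_1+(a,b)$ with $0<a,b<n$ --- do not work. The second family is not even contained in the lattice $Q_1=n\mathbb{Z}^2+\lambda_1\mathbb{Z}$ in general, so such exponents cannot appear in $\mathrm{supp}(S)$ at all. The first family is in $Q_1$, but the inequality $2\lambda_1+n(c_1,c_2)\succ\lambda_1$ forces $\lambda_{1i}>-nc_i$, and with the $|c_i|$ needed to build a size-three antichain (say $c_1\in\{-2,-1,0\}$) you would obtain thresholds like $\lambda_{11}>2n$, not $\lambda_{11}\ge\frac{4n}{n-2}$. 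So the argument as sketched would not recover the bounds in the statement.

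The paper's device is to use the coefficient $(n-1)$ rather than $2$: all three candidates in each case have the form $(n-1)\lambda_1+n(c_1,c_2)$. Then $(n-1)\lambda_1+n(c_1,c_2)\succ\lambda_1$ becomes $(n-2)\lambda_{1i}+nc_i\ge0$ (strict in one coordinate), i.e.\ $\lambda_{1i}\ge\frac{-c_in}{n-2}$, and this is exactly how the thresholds $\frac{kn}{n-2}$ arise. Concretely: for hypothesis~1) take $(c_1,c_2)\in\{(-4,2),(-3,1),(-2,0)\}$; for~2) take $\{(-2,0),(-1,-1),(0,-2)\}$; for~3) take $\{(-3,1),(-2,0),(-1,-1)\}$. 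In each list the three pairs are pairwise $\preceq$-incomparable, the standard representation (Remark~\ref{std-rep}) with $a_{r+1}=n-1$ and some $c_i<0$ shows $\delta^{(j)}\notin\Gamma$, and one checks directly that $\delta^{(j)}-2\lambda_1+\nu_i=(n-3)\lambda_1+n(c_1,c_2)+\nu_i$ is again outside $\Gamma$ whenever the set $\Gamma+2\lambda_1-\nu_i$ is present. Once you have this shape of candidate, the remainder of the argument is exactly the routine verification you describe.
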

\begin{proof}	
	Suppose that $\lambda_{11}\geq \frac{4n}{n-2}$. We get $$(n-1)\lambda_{1}+n(-4,2),\ \  (n-1)\lambda_{1}+n( -3,1),\ \  (n-1)\lambda_1+n(-2,0)\succ \lambda_{1}.$$ In fact,
	$$(n-1)\lambda_{11}-4n=(n-2)\lambda_{11}-4n+\lambda_{11}\geq\lambda_{11}\ \ \ \mbox{and}\ \ \ (n-1)\lambda_{12}+2n>\lambda_{12};$$
	$$(n-1)\lambda_{11}-3n=(n-2)\lambda_{11}-3n+\lambda_{11}\geq n+\lambda_{11}>\lambda_{11}\ \ \ \mbox{and}\ \ \ (n-1)\lambda_{12}+n> \lambda_{12};$$
	$$(n-1)\lambda_{11}-2n=(n-2)\lambda_{11}-2n+\lambda_{11}\geq 2n+\lambda_{11}>\lambda_{11}\ \ \ \mbox{and}\ \ \ (n-1)\lambda_{12}\geq \lambda_{12}.$$
	As $Z=\{(n-1)\lambda_{1}+n(-4,2), (n-1)\lambda_{1}+n(-3,1), (n-1)\lambda_{1}+n(-2,0)\}$
	is not contained in $\Gamma \bigcup_{i=1,\lambda_{1i}\geq n}^{2} \left( \Gamma +2\lambda_{1}-\nu_{i} \right)$ and $min_{\prec}(Z)=Z$, all elements of $Z$ can occur as generalized Zariski exponents.
	
	Now consider $\lambda_{12}\geq \frac{2n}{n-2}$. As $\lambda_{11}\geq \lambda_{12}$, we get $(n-1)\lambda_{1}+n(-2,0)\succ \lambda_{1}$, $(n-1)\lambda_{1}+n( 0,-2)\succ \lambda_{1}$ and $(n-1)\lambda_1+n(-1,-1)\succ \lambda_{1}$. In fact,
	$$(n-1)\lambda_{11}-2n=(n-2)\lambda_{11}-2n+\lambda_{11}\geq\lambda_{11}\ \ \ \mbox{and}\ \ \ (n-1)\lambda_{12}=(n-2)\lambda_{12}+\lambda_{12}\geq 2n+\lambda_{12}>\lambda_{12};$$
	$$(n-1)\lambda_{11}=(n-2)\lambda_{11}+\lambda_{11}\geq 2n+\lambda_{11}>\lambda_{11}\ \ \ \mbox{and}\ \ \ (n-1)\lambda_{12}-2n=(n-2)\lambda_{12}-2n+\lambda_{12}\geq \lambda_{12};$$
	$$(n-1)\lambda_{11}-n=(n-2)\lambda_{11}-n+\lambda_{11}\geq n+\lambda_{11}>\lambda_{11}\ \ \ \mbox{and}\ \ \ (n-1)\lambda_{12}-n=(n-2)\lambda_{12}-n+\lambda_{12}> \lambda_{12}.$$
	Moreover, these elements satisfy all the conditions to be generalized Zariski exponents.
	
	If $\lambda_{12}\geq\frac{n}{n-2}$ and $\lambda_{11}\geq\frac{3n}{n-2}$ then
	$$(n-1)\lambda_{11}-3n=(n-2)\lambda_{11}-3n+\lambda_{11}\geq\lambda_{11}\ \ \ \mbox{and}\ \ \ (n-1)\lambda_{12}+n>\lambda_{12};$$
	$$(n-1)\lambda_{11}-2n=(n-2)\lambda_{11}-2n+\lambda_{11}>\lambda_{11}\ \ \ \mbox{and}\ \ \ (n-1)\lambda_{12}>\lambda_{12};$$
	$$(n-1)\lambda_{11}-n=(n-2)\lambda_{11}-n+\lambda_{11}>\lambda_{11}\ \ \ \mbox{and}\ \ \ (n-1)\lambda_{12}-n=(n-2)\lambda_{12}-n+\lambda_{12}\geq\lambda_{12}.$$
	So, we can verify that $(n-1)\lambda_1+n(-3,1), (n-1)\lambda_{1}+n(-2,0)$ and $(n-1)\lambda_1+n(-1,-1)$ can be possible generalized Zariski exponents.
\end{proof}

By the previous lemma, to a quasi-short parameterization to admit at most two generalized Zariski exponents, which is a necessary condition to be quasi-simple, we must consider $\lambda_{11}<\frac{3n}{n-2}$, $\lambda_{12}<\frac{2n}{n-2}$ or $\lambda_{12}<\frac{n}{n-2}$ if $\frac{3n}{n-2}\leq\lambda_{11}< \frac{4n}{n-2}$.

In the following proposition we will analize the case $\lambda_{12}<\frac{n}{n-2}$ and $\frac{3n}{n-2}\leq\lambda_{11}< \frac{4n}{n-2}$ simultaneously with the possibility $\lambda_{12}=0$ for any situation.

\begin{proposition}\label{proposicao1}
	Let $H$ be a quasi-short parameterization with $\Gamma=\langle (n,0), (0,n), \lambda_{1}\rangle$. If
	$$\lambda_{12}=0\ \ \ \ \ \mbox{or}\ \ \ \ \ \lambda_{12}<\frac{n}{n-2}\ \mbox{and}\ \frac{3n}{n-2}\leq\lambda_{11}< \frac{4n}{n-2}$$
	then $H$ is quasi-simple if and only if $n\in\{3,4\}$ and $(\lambda_{11},\lambda_{12})\neq \left ( \frac{3n}{n-2},0\right )$. In this case, $H$ is formally $\tilde{\mathcal{A}}$-equivalent to
	$$
	\left( t_{1}^{n},t_{2}^{n},\underline{t}^{\lambda_{1}}+a\underline{t}^{(n-1)\lambda_{1}+n(-2,i)} +b\underline{t}^{(n-1)\lambda_{1}+n(-3,j)}\right)
	$$
	where $a, b \in \{0,1\}$, $i,j\in \mathbb{N}$ with $a=0$ if $i\geq j$.
\end{proposition}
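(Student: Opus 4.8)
The plan is to reduce the whole question to the calculus of standard representations in $\Gamma=\langle\nu_1=(n,0),\nu_2=(0,n),\nu_3=\lambda_1\rangle$, in the spirit of Lemma \ref{3-zariskis} and Remark \ref{lambda2}. First I would fix two standing reductions. Since $g=1$ we have $n_1=n$, so by Remark \ref{std-rep} every $\gamma\in Q_1$ has a unique standard representation $\gamma=n(a_1,a_2)+a\lambda_1$ with $0\le a<n$, and $\gamma\in\Gamma$ iff $a_1,a_2\ge 0$. Moreover $n\ge 3$ forces $\lambda_{12}<\tfrac{n}{n-2}\le n$, so $\nu_1$ is the only generator with $\lambda_{1i}\ge n$; hence, by the reformulation of Definition \ref{zariski-exponents} given after Remark \ref{unicidade-coef-zariski}, $E_{\mathcal Z}(H)=\min_{\preceq}\bigl(supp(S)\setminus\mathcal E\bigr)$ with $\mathcal E:=\Gamma\cup(\Gamma+2\lambda_1-\nu_1)$. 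Writing $\gamma-2\lambda_1+\nu_1=n(a_1+1,a_2)+(a-2)\lambda_1$ and absorbing the cases $a\in\{0,1\}$ via $n\lambda_1\in n\mathbb Z^2$, one reads off membership of $\gamma$ in $\mathcal E$ directly from its standard representation.

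The technical core is the classification of the non-eliminable exponents lying above $\lambda_1$, i.e. of $\gamma=n(a_1,a_2)+a\lambda_1\succ\lambda_1$ with $\gamma\notin\mathcal E$. From $\gamma\succeq\lambda_1$ together with $\lambda_{12}<\tfrac{n}{n-2}$ I would deduce $a_2\ge 0$ and $a\ge 2$; from $\gamma\notin\mathcal E$ that $a_1\le-2$; and from $na_1+a\lambda_{11}\ge\lambda_{11}$ together with $\lambda_{11}<\tfrac{4n}{n-2}$ that $(a-1)\lambda_{11}\ge n|a_1|\ge 2n$, which bounds $|a_1|\le 3$ and forces $a\ge n-2$. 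Inspecting the surviving pairs: for $n\in\{3,4\}$ only $a=n-1$ with $a_1\in\{-2,-3\}$ remain, and one verifies that then every point above $\lambda_1$ on the two ``vertical lines'' $L_{-2}=(n-1)\lambda_1+n(-2,\mathbb N)$ and $L_{-3}=(n-1)\lambda_1+n(-3,\mathbb N)$ is non-eliminable while every other exponent above $\lambda_1$ lies in $\mathcal E$ (the line $L_{-3}$ contributing points strictly above $\lambda_1$ only when $\lambda_{11}\ge\tfrac{3n}{n-2}$; at the borderline $\lambda_1=(\tfrac{3n}{n-2},0)$ that line collapses onto $\lambda_1$, and such a pair is in any case not a coprime normalised semigroup). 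For $n\ge 5$, however, the extra pair $a=n-2$, $a_1=-2$ is admissible, since $\lambda_{11}\ge\tfrac{3n}{n-2}\ge\tfrac{2n}{n-3}$, and it produces a third line of non-eliminable exponents.

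For the necessity part, when $n\ge 5$ the three lines give at least three non-eliminable exponents whose coefficients are $\widetilde{\mathcal A}_1$-invariant by Theorem \ref{teorema} and Remark \ref{unicidade-coef-zariski}; since the homothety group $\mathcal H$ has only two parameters, it can normalise at most two of them, so a generic member of the class has a positive-dimensional family of pairwise non-$\widetilde{\mathcal A}$-equivalent parameterizations and $H$ is not quasi-simple — and as noted $\lambda_1=(\tfrac{3n}{n-2},0)$ is never a valid normalised coprime semigroup. For sufficiency assume $n\in\{3,4\}$ and $\lambda_1\ne(\tfrac{3n}{n-2},0)$. By the second paragraph, after the $\widetilde{\mathcal A}_1$-reductions of Proposition \ref{normalize-prop} we may write $S=\underline t^{\lambda_1}+a\,\underline t^{\delta_1}u_1(\underline t)+b\,\underline t^{\delta_2}u_2(\underline t)$ with $\delta_1=(n-1)\lambda_1+n(-2,i)$, $\delta_2=(n-1)\lambda_1+n(-3,j)$ the minimal points of $L_{-2},L_{-3}$ above $\lambda_1$ and $u_1(\underline 0)=u_2(\underline 0)=1$; since along each line the exponents differ by multiples of $\nu_2$ and, because $\lambda_{12}<n$ forces $\eta_2=0$ so that $\rho_2(\underline t)=t_2\bigl(1+H^*(\epsilon_2)\bigr)^{1/n}$ rescales $t_2$ by an arbitrary unit power series without disturbing $\underline t^{\lambda_1}$, a $\rho_2$-change together with the eliminations available for exponents in $\mathcal E$ removes the tails $u_1,u_2$; Proposition \ref{normalize} then normalises $a,b$ to lie in $\{0,1\}$, and when $i\ge j$ one has $\delta_1=\delta_2+\nu_1+(i-j)\nu_2\in\delta_2+\Gamma$, so $\underline t^{\delta_1}$ is part of the (now trivialised) tail of $\underline t^{\delta_2}$ and $a=0$. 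This gives the asserted normal form, which depends only on the discrete data $(a,b,i,j)$, so the topological class meets only countably many $\widetilde{\mathcal A}$-classes and $H$ is quasi-simple.

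The step I expect to be the main obstacle is the second paragraph: the bookkeeping that isolates exactly the admissible pairs $(a,a_1)$, pins the threshold for a third line precisely at $n=5$, and deals with the borderline $\lambda_1=(\tfrac{3n}{n-2},0)$; and, within the sufficiency direction, the verification that the line-tails $u_1,u_2$ are genuinely absorbable by admissible coordinate changes — this is exactly where the hypothesis that $\lambda_{12}$ is $0$ (or small enough that the $X_2$-direction is inert for the leading monomial $\underline t^{\lambda_1}$) enters essentially.
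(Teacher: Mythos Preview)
Your overall strategy---classify the non-eliminable exponents above $\lambda_1$ via standard representations, then argue necessity and sufficiency---is the same as the paper's, and the bookkeeping for $n\in\{3,4\}$ is essentially correct. There are, however, two genuine gaps.

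\textbf{Necessity at $n=5$, $\lambda_1=(5,1)$.} Your ``third line'' $(n-2)\lambda_1+n(-2,\mathbb N)$ has first coordinate $(n-2)\lambda_{11}-2n$, which coincides with the first coordinate $(n-1)\lambda_{11}-3n$ of $L_{-3}$ exactly when $\lambda_{11}=n$. For $n=5$ this happens at $\lambda_1=(5,1)$ (the only value with $\lambda_{12}<\tfrac{5}{3}$), and then every non-eliminable exponent above $\lambda_1$ has first coordinate $5$ or $10$; hence $|E_{\mathcal Z}(H)|\le 2$ for every $H$ in this class. Theorem~\ref{teorema} and Remark~\ref{unicidade-coef-zariski} speak only about coefficients of \emph{Zariski} exponents, not of arbitrary non-eliminable ones, so your counting argument yields nothing here. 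The paper treats this case separately via Example~\ref{notquasisimple}: a one-parameter family $H_c$ with fixed $E_{\mathcal Z}=\{(5,8),(10,4)\}$ but an extra term at $(5,9)$ whose coefficient $c$ is shown, by a direct analysis of the $\widetilde{\mathcal A}_1$-action, to be an invariant. Some such ad~hoc computation is unavoidable.

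\textbf{Sufficiency: killing the tails $u_1,u_2$.} The claim that a $\rho_2$-change ``rescales $t_2$ by an arbitrary unit power series without disturbing $\underline t^{\lambda_1}$'' fails on both counts. The unit is constrained to be $(1+H^*(\epsilon_2))^{1/n}$ with $\epsilon_2\in\mathcal M_{r+1}$, so its support lies in $\Gamma$, not all of $\mathbb N^2$; and unless $\lambda_{12}=0$ the leading monomial $\underline t^{\lambda_1}=t_1^{\lambda_{11}}t_2^{\lambda_{12}}$ \emph{is} affected, feeding new terms back onto the non-eliminable lines. The paper instead proves two dedicated elimination lemmas (Lemmas~\ref{elim-zariski1} and~\ref{elim-zariski2}) by exhibiting explicit $2$-forms $\omega_0=\tfrac{1}{n}(s_1X_1\,dX_2\wedge dX_3+s_2X_2\,dX_1\wedge dX_3+\tfrac{s_1\lambda_{11}-s_2\lambda_{12}}{n}X_3\,dX_1\wedge dX_2)$ with prescribed dominant exponent $\mathcal V(\Psi_H(\omega_0))$ and invoking Proposition~\ref{elimination}. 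The first lemma shows that with a single Zariski exponent $\delta$ every $\underline t^{\delta+\gamma}$, $\gamma\in\Gamma\setminus\{0\}$, is eliminable; the second handles two Zariski exponents $\delta_1,\delta_2$ with $\{\delta_1-\lambda_1,\delta_2-\lambda_1\}$ linearly independent and tails along a common $\nu_k$, by solving a $2\times2$ linear system in $(s_1,s_2)$. These differential-form arguments are what actually remove the tails; your $\rho_2$ mechanism does not.
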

\begin{proof}
	As $H$ is normalized, if $\lambda_{12}=0$ we must have $n<\lambda_{11}$. But $\lambda_{11}<\frac{4n}{n-2}$ so $n<6$.

	On the other hand, for $\lambda_{12}\neq 0$, if $n\geq 6$ and $\lambda_{11}<\frac{4n}{n-2}$ we get $\lambda_{11}< 6\leq n$. As $\frac{3n}{n-2}\leq\lambda_{11}$
	we have	
	\begin{equation}\label{3z0}
		(n-1)\lambda_{1}+n(-3,1)\succ \lambda_{1},\ \ (n-1)\lambda_1+n(-2,0)\succ\lambda_{1}\ \ \mbox{and}\ \ (n-2)\lambda_{1}+n(-2,1)\succ\lambda_{1}
	\end{equation}
	because 
	$$(n-1)\lambda_{11}-3n=(n-2)\lambda_{11}-3n+\lambda_{11}\geq \lambda_{11}\ \ \mbox{and}\ \ (n-1)\lambda_{12}+n>\lambda_{12},$$
	$$(n-1)\lambda_{11}-2n=(n-2)\lambda_{11}-2n+\lambda_{11}> \lambda_{11}\ \ \mbox{and}\ \ (n-1)\lambda_{12}\geq\lambda_{12},$$
	$$(n-2)\lambda_{11}-2n\geq n\geq\lambda_{11}\ \ \mbox{and}\ \ (n-2)\lambda_{12}+n>\lambda_{12}.$$
	Moreover, since $\lambda_{11}<n$ and $\lambda_{12}<n$, we can verify that all three elements in (\ref{3z0}) can be generalized Zariski exponents for $H$, so $H$ can not be quasi-simple.
	
	If $n=5$, then $5=\frac{3n}{n-2}\leq\lambda_{11}<\frac{4n}{n-2}=\frac{20}{3}$ that is, $\lambda_{11}\in\{5,6\}$. For $\lambda_{11}=6$, independently $\lambda_{12}=0$ or not, we get 
	$$(n-1)\lambda_{1}+n(-3,1)\succ \lambda_{1},\ \ (n-1)\lambda_1+n(-2,0)\succ\lambda_{1}\ \ \mbox{and}\ \ (n-2)\lambda_{1}+n(-2,2)\succ\lambda_{1}.$$
	 Similar to (\ref{3z0}), we conclude that these values are possible generalized Zariski exponents for $H$ and the parameterization is not quasi-simple.
    For $\lambda_{11}=5=n$ we must have $\lambda_{12}=1$, that is, $\lambda_{1}=(5,1)$ and Example \ref{notquasisimple} indicates that $H$ can not be quasi-simple.
	
	In this way, to obtain a quasi-simple parameterization we must have $3\leq n\leq 4$. 
	
	Notice that $\lambda_{12}<\frac{n}{n-2}\leq n$ and $n<\frac{3n}{n-2}\leq\lambda_{11}$. In this way the exponents in the quasi-short parameterization $H$ does not belong to $\Gamma\cup\{\Gamma+2\lambda_{1}-(n,0)\}$.
	
	By Remark \ref{std-rep}, any $\delta\not\in\Gamma$ can be expressed as
	$\delta =c_3\lambda_{1}+n(c_1,c_2)$ with $0\leq c_3<n$ where $c_1<0$ or $c_2<0$. In addition, in order to $\delta\succ\lambda_{1}$ we must have $c_3\geq 2$.
	
	If $n=4$ and $c_3=2$, the condition $\lambda_1\prec\delta=2\lambda_1+4(c_1,c_2)$ implies $c_2\geq 0$ and $c_1\geq -1$. But such a condition gives us $\delta\in \Gamma+2\lambda_{1}-(4,0)$ that is not a possible exponent in a quasi-short parameterization.
	
	So, for $3\leq n\leq 4$ the exponents to be considered in $H$ are
	$(n-1)\lambda_{1}+n(c_1,c_2)\succ\lambda_{1}$, i.e.,
	$$c_1\geq -\frac{n-2}{n}\cdot \lambda_{11}>-\frac{n-2}{n}\cdot \frac{4n}{n-2}=-4\ \ \mbox{and}\ \ c_2\geq -\frac{n-2}{n}\cdot \lambda_{12}>-\frac{n-2}{n}\cdot\frac{n}{n-2}=-1.$$ 
	
	If $c_1=-1$ and $c_2\geq 0$ we obtain an element in $\Gamma+2\lambda_{1}-(n,0)$, that is not a possible exponent in a quasi-short parameterization. 
	
	If $\lambda_{12}=0$ and $\lambda_{11}=\frac{3n}{n-2}$ then $n_1\neq n$ and $\Gamma\neq\langle (n,0),(0,n),\lambda_{1}\rangle$. So, we can exclude this possibility.
	
	In this way, for $3\leq n\leq 4$, we can consider a quasi-short parameterization given by
	$$
	\left( t_{1}^{n},t_{2}^{n},\underline{t}^{\lambda_{1}}+\sum_{k\geq 0}a_{k}t^{(n-1)\lambda_{1}+n(-2,k)}+\sum_{l\geq 0}b_{l}t^{(n-1)\lambda_{1}+n(-3,l)}  \right ),
	$$
	with $l>0$ if $\lambda_{12}=0$ and $\lambda_{11}\neq \frac{3n}{n-2}$.
	In particular $E_{\mathcal{Z}}(H)\subset\{(n-1)\lambda_{1}+n(-2,k),\ (n-1)\lambda_{1}+n(-3,l);\ k, l\in\mathbb{N}\}\cup\{ \underline{\infty}\}$.
	
	Notice that for $l\leq k$ we get $(n-1)\lambda_{1}+n(-2,k)=(n-1)\lambda_{1}+n(-3,l)+n(1,k-l)\in (n-1)\lambda_{1}+n(-3,l)+\Gamma\setminus\{(0,0)\}$, that is, $(n-1)\lambda_{1}+n(-2,k)$ is not a generalized Zariski exponent.	
	
	Suppose that $E_{\mathcal{Z}}(H)=\{\delta=(n-1)\lambda_1+n(-2,i)\}$ for some $i\geq 0$, that is, $a_{l}=0$ for every $l\geq 0$. As any $\gamma\succ\delta$ with $\gamma\in Q_1=\mathbb{Z}\lambda_{1}+n\mathbb{Z}^2$ is such that $\gamma\in \Gamma\cup(\Gamma+2\lambda_{1}-\nu_1)\cup\{\delta+n(0,k); k\geq 1\}$ we can apply (possibly infinite many times) Proposition \ref{normalize-prop} and Lemma \ref{elim-zariski1} in order to eliminate all terms $\underline{t}^{\gamma}$ with $\gamma\succ\delta=(n-1)\lambda_{1}+n(-2,i)$ and we obtain $(t_1^n,t_2^n,\underline{t}^{\lambda_{1}}+a_{i}\underline{t}^{(n-1)\lambda_{1}+n(-2,i)})$. In addition, by Proposition \ref{normalize}, we conclude that, in this case, $H$ is formally $\tilde{\mathcal{A}}$-equivalent to 
		$\left ( t_1^n,t_2^n,\underline{t}^{\lambda_{1}}+\underline{t}^{(n-1)\lambda_{1}+n(-2,i)}\right ) $.
	
	If $E_{\mathcal{Z}}(H)=\{\delta=(n-1)\lambda_1+n(-3,i)\}$ for some $i\geq 0$, that is, $a_{k}=0$ for every $k<l$ then for any $\gamma\in Q_1=\mathbb{Z}\lambda_{1}+n\mathbb{Z}^2$ with $\gamma\succ\delta$ we have $\gamma\in \Gamma\cup(\Gamma+2\lambda_{1}-\nu_1)\cup\{\delta+\Gamma\setminus\{(0,0)\}\}$. Applying (possibly infinite many times) Proposition \ref{normalize-prop} and Lemma \ref{elim-zariski1} we can to eliminate all terms $\underline{t}^{\gamma}$ with $\gamma\succ\delta=(n-1)\lambda_{1}+n(-2,i)$ and, by Proposition \ref{normalize}, $H$ is formally $\tilde{\mathcal{A}}$-equivalent to 
		$\left ( t_1^n,t_2^n,\underline{t}^{\lambda_{1}}+\underline{t}^{(n-1)\lambda_{1}+n(-2,i)}\right ) $.
	
Finally, suppose that $E_{\mathcal{Z}}(H)=\{\delta_1=(n-1)\lambda_1+n(-2,i),\delta_2=(n-1)\lambda_1+n(-3,j)\}$ for some $0\leq i<j$ then 
		$$\{\gamma\succ\delta_1;\ \gamma\in Q_1\}\cup \{\gamma\succ\delta_2;\ \gamma\in Q_1\}\subset  \Gamma\cup(\Gamma+2\lambda_{1}-\nu_1)\cup\{\delta_1+n(0,k),\ k>0\}\cup\{\delta_{2}+n(0,k),\ k>0\}.$$ In this situation, as $\{\delta_1-\lambda_{1}, \delta_{2}-\lambda_{1}\}$ is a linearly independent set, we can apply (possibly infinite many times) Proposition \ref{normalize-prop}, Lemma \ref{elim-zariski2}, Proposition \ref{normalize} and we conclude that $H$ is formally $\tilde{\mathcal{A}}$-equivalent to 
		$\left ( t_1^n,t_2^n,\underline{t}^{\lambda_{1}}+\underline{t}^{(n-1)\lambda_{1}+n(-2,i)}+\underline{t}^{(n-1)\lambda_{1}+n(-3,j)}\right ) $.
	
\end{proof}

By Lemma \ref{3-zariskis} and Proposition \ref{proposicao1}, the next step is to consider the cases:
\begin{equation}\label{condicoes}
	0<\lambda_{12}\leq \lambda_{11}< \frac{2n}{n-2}\ \ \ \  \text{or}\ \ \ \  0<\lambda_{12}< \frac{2n}{n-2}\leq \lambda_{11} < \frac{3n}{n-2}.
\end{equation}

\begin{proposition}\label{proposicao2}
	Let $H=(t_{1}^{n}, t_{2}^{n}, S(t_{1},t_{2}))$ be a quasi-short parameterization with $0<\lambda_{12}\leq \lambda_{11}< \frac{2n}{n-2}$, then $H$ is quasi-simple if and only if 
	\begin{enumerate}
		\item[a)] $\Gamma_H=\langle (n,0), (0,n), (1,1)\rangle$ and in this case  
		$H$ is formally $\tilde{\mathcal{A}}$-equivalent to $ \left( t_{1}^{n},t_{2}^{n},t_1t_2\right)$, that is, a normal surface;
		
		\item[b)] $\Gamma_H=\langle (n,0), (0,n), (2,1)\rangle$ for $6\leq n\leq 7$ or $\Gamma_H=\langle (5,0), (0,5), (3,1)\rangle$. In this case 
		$H$ is formally $\tilde{\mathcal{A}}$-equivalent to $\left( t_{1}^{n},t_{2}^{n},\underline{t}^{\lambda_{1}}+a\underline{t}^{(n-2)\lambda_{1}+n(-1,i)}+b\underline{t}^{(n-1)\lambda_{1}+n(-1,j)}\right)$ 
		where $a,b\in\{0,1\}$, $i, j\in\mathbb{N}$ with $b=0$ if $i\leq j$.
		
		\item[c)] $\Gamma_H=\langle (5,0), (0,5), (2,1)\rangle$  or $\Gamma_H=\langle (4,0), (0,4), (\lambda_{11},1)\rangle$ where $2\leq \lambda_{11}\leq 3$. In this case 
		$H$ is formally $\tilde{\mathcal{A}}$-equivalent to $\left( t_{1}^{n},t_{2}^{n},\underline{t}^{\lambda_{1}}+a\underline{t}^{(n-1)\lambda_{1}+n(-1,i)}\right)$ 
		where $a\in\{0,1\}$ and $i\in\mathbb{N}$.
		
		\item[d)] $\Gamma_H=\langle (5,0), (0,5), (2,2)\rangle$  or $\Gamma_H=\langle (4,0), (0,4), \lambda_{1})\rangle$ where $\lambda_{1}\in\{(3,2),(3,3)\}$. In this case 
		$H$ is formally $\tilde{\mathcal{A}}$-equivalent to $$\left( t_{1}^{n},t_{2}^{n},\underline{t}^{\lambda_{1}}+a\underline{t}^{(n-1)\lambda_1+n(-1,-1)}+b\underline{t}^{(n-1)\lambda_{1}+n(i,-1)}+c\underline{t}^{(n-1)\lambda_{1}+n(-1,j)}\right)$$ 
		where $a,b,c\in\{0,1\}$, $i, j\in\mathbb{N}$ with $b=c=0$ if $a=1$.
		
		\item[e)] $\Gamma_H=\langle (3,0), (0,3), (\lambda_{11},\lambda_{12})\rangle$ where $1\leq \lambda_{12}\leq\lambda_{11}\in\{2,3,4,5\}$ with $\lambda_{1}\neq (3,3)$. In this case, 
		$H$ is $\tilde{\mathcal{A}}$-equivalent to $(t_1^3,t_2^3,\underline{t}^{\lambda_{1}})$ if $\lambda_{12}< 3$ and $(t_1^3,t_2^3,\underline{t}^{\lambda_{1}}+a\underline{t}^{2\lambda_{1}+3(-1,-1)});\ a\in\{0,1\}$ if $3\leq\lambda_{12}.$
	\end{enumerate}  
	
\end{proposition}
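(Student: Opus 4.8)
<br>

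The plan is to handle Proposition \ref{proposicao2} by reducing, in each of the ranges $0<\lambda_{12}\le\lambda_{11}<\frac{2n}{n-2}$, the analysis to a finite list of semigroups and then, for each, determining the generalized Zariski exponents by a direct application of Remark \ref{std-rep}. First I would observe that $\lambda_{11}<\frac{2n}{n-2}$ forces $n\le 5$ unless $\lambda_{11}$ is very small, so one obtains only finitely many pairs $(n,\lambda_1)$ to inspect; the constraint that $H$ be normalized (in particular conditions (2) and (3) after Definition \ref{dominant-vertices}'s list, i.e. $\lambda_{12}\le\lambda_{11}$ and the genuine characteristic-exponent condition $n_1\ne n$ ruling out e.g. $\lambda_1$ with $n_1\lambda_1\in n\mathbb Z^2$ trivially) cuts the list down further. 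For each surviving semigroup $\Gamma=\langle(n,0),(0,n),\lambda_1\rangle$ I would compute $Q_1=\mathbb Z\lambda_1+n\mathbb Z^2$, write a general $\delta\succ\lambda_1$, $\delta\notin\Gamma$ in standard form $\delta=c_3\lambda_1+n(c_1,c_2)$ with $0\le c_3<n$ (so $c_3\ge 2$ as in the proof of Proposition \ref{proposicao1}), and read off which such $\delta$ survive the removal of $\Gamma\cup\bigcup_{\lambda_{1i}\ge n}(\Gamma+2\lambda_1-\nu_i)$. This is exactly the same bookkeeping already carried out in Proposition \ref{proposicao1} and Lemma \ref{3-zariskis}, just for the complementary range of $\lambda_1$.

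Case (a) is the normal-surface case $\lambda_1=(1,1)$, which is immediate from Example \ref{normal}. For cases (b)--(e) the scheme is identical: having pinned down $E_{\mathcal Z}(H)$ as a subset of an explicit two- (or three-) parameter family of lattice points, I would show $H$ admits at most two (resp. for (d), a controlled configuration of) generalized Zariski exponents, invoke Theorem \ref{teorema} and Remark \ref{unicidade-coef-zariski} to see the coefficients of those terms are $\tilde{\mathcal A}$-invariant, and then use Proposition \ref{normalize} to normalize at most two of them to $1$. The forward direction — that these are the \emph{only} quasi-simple ones — follows by exhibiting, for every $(n,\lambda_1)$ outside the listed families, three lattice points satisfying the conditions to be simultaneous generalized Zariski exponents (so that, as in the remark after Example \ref{notquasisimple}, placing a free parameter on the third term produces uncountably many $\tilde{\mathcal A}_1$-orbits), together with the boundary subcases: the exclusion $\lambda_1\ne(3,3)$ in (e) and the excluded $\Gamma$'s in (b),(c) are precisely the places where the semigroup fails to have $n_1\ne n$ or where Example \ref{notquasisimple}-type obstructions appear, e.g. $\Gamma=\langle(5,0),(0,5),(5,1)\rangle$. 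For the reverse direction, the elimination of all exponents above the Zariski ones is carried out by repeated use of Proposition \ref{normalize-prop} and the technical Lemmas \ref{elim-zariski1} and \ref{elim-zariski2} from Section \ref{lemas}, exactly as at the end of the proof of Proposition \ref{proposicao1}; one checks in each case that every $\gamma\in Q_1$ with $\gamma\succ\min E_{\mathcal Z}(H)$ lies in $\Gamma\cup(\Gamma+2\lambda_1-\nu_1)\cup(\text{Zariski exponents}+\text{nonneg. shifts})$, which is what those lemmas require.

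The genuinely delicate case is (d), the only one where more than two generalized Zariski exponents may occur: here $\lambda_1\in\{(2,2),(3,2),(3,3)\}$ and the candidate exponents $(n-1)\lambda_1+n(-1,-1)$, $(n-1)\lambda_1+n(i,-1)$, $(n-1)\lambda_1+n(-1,j)$ are not pairwise comparable in $\preceq$, so all three can appear in a quasi-short parameterization. Quasi-simplicity then has to be argued not from a bound on the number of Zariski exponents but from the $\tilde{\mathcal A}$-action being able to eliminate the two "off-diagonal" families $(n-1)\lambda_1+n(i,-1)$ and $(n-1)\lambda_1+n(-1,j)$ once the "corner" term $(n-1)\lambda_1+n(-1,-1)$ is present with coefficient $1$ — equivalently, showing that when $a=1$ a suitable change of coordinates in $\tilde{\mathcal A}_1$ (using $\eta_1,\eta_2\ne 0$, legitimate since $\lambda_{1i}\ge n$) kills $b$ and $c$, while when $a=0$ the two families are each reducible to a single representative by Proposition \ref{normalize-prop} and Lemma \ref{elim-zariski1}, and the remaining two coefficients are normalized by Proposition \ref{normalize}; the interplay here with the constraint $b=c=0$ if $a=1$ is where I expect the main technical obstacle, since one must verify that the Zariski-exponent invariance of Theorem \ref{teorema} is not violated — i.e. that the "corner" term is itself \emph{not} a protected generalized Zariski exponent in the stricter sense, which it is not precisely because $(n-1)\lambda_1+n(-1,-1)=\lambda_1+k(\gamma+\lambda_1-\nu_i)$-type relations of Case 3 in the proof of Theorem \ref{teorema} do \emph{not} apply to it. Carrying out that verification carefully, and making sure the finitely many boundary semigroups are correctly sorted into (b)--(e) versus the non-quasi-simple bin, is the bulk of the work; everything else is the routine lattice-point bookkeeping already rehearsed in the preceding results.
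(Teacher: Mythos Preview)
Your overall strategy---reduce to finitely many $(n,\lambda_1)$, use the standard representation of Remark~\ref{std-rep} to enumerate candidate exponents, exhibit three incomparable Zariski exponents in the non-quasi-simple cases, and finish with Proposition~\ref{normalize-prop}, Proposition~\ref{normalize}, and the Section~\ref{lemas} lemmas---is exactly what the paper does, and cases (a), (b), (c), (e) go through as you outline.

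Case (d), however, contains a genuine error. You write that when $a=1$ one kills the off-diagonal families ``using $\eta_1,\eta_2\ne 0$, legitimate since $\lambda_{1i}\ge n$''. This is false in every semigroup listed under (d): for $\Gamma=\langle(5,0),(0,5),(2,2)\rangle$ one has $\lambda_{11}=\lambda_{12}=2<5=n$; for $\Gamma=\langle(4,0),(0,4),(3,2)\rangle$ and $\langle(4,0),(0,4),(3,3)\rangle$ one has $\lambda_{1i}\le 3<4=n$. Hence condition~(\ref{cond-mud}) forces $\eta_1=\eta_2=0$, and the $X_{r+1}\eta_i$ mechanism is unavailable. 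Your proposed elimination step simply does not exist here.

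The paper's actual argument for (d) is different and rests on a point you have backwards. When the corner term is present ($a\ne 0$), the corner $(n-1)\lambda_1+n(-1,-1)$ \emph{is} the unique element of $E_{\mathcal Z}(H)$: it strictly $\preceq$-dominates every $(n-1)\lambda_1+n(i,-1)$ and $(n-1)\lambda_1+n(-1,j)$ for $i,j\ge 0$, so those are not Zariski exponents at all but lie in $\delta+(\Gamma\setminus\{0\})$ with $\delta$ the corner. Lemma~\ref{elim-zariski1} then eliminates them directly---no $\eta_i$ needed. When $a=0$ and both families are present, the two minimal exponents $\delta_1=(n-1)\lambda_1+n(i,-1)$ and $\delta_2=(n-1)\lambda_1+n(-1,j)$ have their higher terms shifted by $\nu_1$ and $\nu_2$ respectively (not both by the same $\nu_i$), so Lemma~\ref{elim-zariski2} does not apply; the paper invokes Lemma~\ref{elim-zariski3}, which you never mention and which is precisely tailored to this mixed-shift situation. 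Your sketch therefore misses both the correct reason the corner case collapses and the correct technical lemma for the two-exponent case.
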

\begin{proof}
	As $0<\lambda_{12}\leq\lambda_{11}<\frac{2n}{n-2}$, for $n\geq 4$ we get $\lambda_{12}\leq\lambda_{11}<n$ and for $n\geq 6$ we obtain that $\lambda_{11}\leq 2$.
	
	If $\lambda_{11}=1$ then we must have $\lambda_{12}=1$ and the proposition follows by Example \ref{normal}.
	
	So, in what follows we suppose that $\lambda_{11}\geq 2$ and we split the cases according to the possible value for $n>2$.
	
	{\bf Case $n\geq 6$:} Notice that in this case we must have $1\leq \lambda_{12}\leq\lambda_{11}=2<n$. In particular, if a parameterization $H=(t_1^n,t_2^n,S(\underline{t}))$ is quasi-short then  $\lambda_{1}\prec\gamma\in supp(S(\underline{t})-\underline{t}^{\lambda_{1}})$ does not belong to $\Gamma$.
	
	For $\lambda_{12}=2$ we can verify that
	\begin{equation}\label{zz}
		(n-1)\lambda_{1}+n(-1,0),\ \ (n-1)\lambda_{1}+n(0,-1)\ \ \mbox{and}\ \ (n-2)\lambda_{1}+n(-1,1)\end{equation} are possible generalized Zariski exponents.
	In addition, for $n\geq 8$ and $1\leq\lambda_{12}\leq 2$, we have that
	\begin{equation}\label{zzz}
	(n-1)\lambda_{1}+n(-1,0),\ \ (n-2)\lambda_{1}+n(-1,1)\ \ \mbox{and}\ \ (n-3)\lambda_{1}+n(-1,2)
	\end{equation}
	can be considered as generalized Zariski exponents and, as we have remarked, we do not have quasi-simple parameterization. Recall that if $\lambda_{1}=(2,2)$ then $n$ must be odd in order to obtain the value semigroup $\langle (n,0),(0,n),\lambda_{1}\rangle$.
	
	So, in this case, it is sufficient to consider $6\leq n\leq 7$, $\lambda_{11}=2$ and $\lambda_{12}=1$.
	
	By Remark \ref{std-rep}, any $\gamma\not\in\Gamma$ can be expressed as $\gamma=c_3\lambda_1+n(c_1,c_2)$ with $0\leq c_3<n$ where $c_1<0$ or $c_2<0$. Notice that for $6\leq n\leq 7$ if $c_3\leq n-3$, to obtain $\gamma=c_3(2,1)+n(c_1,c_2)\succ\lambda_{1}=(2,1)$ we must have $c_1, c_2\in\mathbb{N}$, that is, $\gamma\in\Gamma$. Moreover,
	$$\begin{array}{c}
		(n-1)\lambda_{1}+n(c_1,c_2)\succ\lambda_{1}\\
		(n-2)\lambda_{1}+n(c_1,c_2)\succ\lambda_{1}
	\end{array}\ \ \Leftrightarrow\ \ c_1\geq -1\ \mbox{and}\ c_2\geq 0.$$
	Consequently, in this case, any quasi-short parameterization can be given by
	\begin{equation}\label{forma1}
		H=\left( t_{1}^{n},t_{2}^{n},\underline{t}^{\lambda_{1}}+\sum_{k\geq 0}a_{k}\underline{t}^{(n-2)\lambda_{1}+n(-1,k)}+\sum_{l\geq 0}b_{l}\underline{t}^{(n-1)\lambda_{1}+n(-1,l)} \right)
	\end{equation}
	and $E_{\mathcal{Z}}(H)\subset\{(n-2)\lambda_{1}+n(-1,k),\ (n-1)\lambda_{1}+n(-1,l);\ k, l\in\mathbb{N}\}\cup\{ \underline{\infty}\}$. Notice that if $k\leq l$ then $(n-1)\lambda_1+n(-1,l)=(n-2)\lambda_1+n(-2,k)+\lambda_1+n(0,l-k)$, that is, we have at most one generalized Zariski exponent. 
	
	Similarly to Proposition \ref{proposicao1} we conclude that $H$ is formally $\tilde{\mathcal{A}}$-equivalent to 
		\begin{equation}\label{caso2}
			\left( t_{1}^{n},t_{2}^{n},\underline{t}^{\lambda_{1}}+a\underline{t}^{(n-2)\lambda_{1}+n(-1,i)}+b\underline{t}^{(n-1)\lambda_{1}+n(-1,j)}\right)\end{equation}
		where $a,b\in\{0,1\}$, $i, j\in\mathbb{N}$ with $b=0$ if $i\leq j$.
	
	{\bf Case $n=5$:} In this case we can have $1\leq \lambda_{12}\leq\lambda_{11}\leq\frac{2n}{n-2}=\frac{10}{3}<n=5$. 
	
	Recall that, by Remark \ref{std-rep}, any $\gamma\not\in\Gamma$ can be expressed as $\gamma=c_3\lambda_1+5(c_1,c_2)$ with $0\leq c_3<5$ where $c_1<0$ or $c_2<0$. In order to $\gamma\succ\lambda_{1}$ we must have $c_3\in\{3,4\}$ with $c_1= -1$ or $c_2=-1$. 
	
	If $2\leq\lambda_{12}\leq\lambda_{11}=3$ then the elements in (\ref{zz}) are possible generalized Zariski exponents and we do not obtain a quasi-simple surface.
	
	If $\lambda_{1}=(3,1)$ then we can verify that the possible generalized Zariski exponents are $4\lambda_{1}+5(-1,c_2)$ and $3\lambda_{1}+5(-1,c_1)$ with $c_1,c_2\in\mathbb{N}$. So, we can consider quasi-short parameterization given as (\ref{forma1}) and consequently, $H$ is formally $\tilde{\mathcal{A}}$-equivalent to (\ref{caso2}).
	
	For $\lambda_{11}=2$ we have that
	$$\gamma=3(2,\lambda_{12})+5(c_1,c_2)\succ (2,\lambda_{12})=\lambda_{1}\ \ \Leftrightarrow\ \ c_1,c_2\in\mathbb{N},$$
	that is, $\gamma\in\Gamma$. In this way, possible generalized Zariski exponent is $4(2,\lambda_{12})+5(c_1,c_2)\not\in\Gamma$ with $c_1\geq -1, c_2\geq -1$ if $\lambda_{12}=2$ and $c_1=-1$, $c_2\geq 0$ for $\lambda_{12}=1$. Consequently, a quasi-short parameterization can be given by $H=(t_1^n,t_2^n,S(\underline{t}))$ with
	\begin{equation}\label{forma2}
		S(\underline{t})=\underline{t}^{\lambda_{1}}+\sum_{k\geq 0}a_{k}\underline{t}^{(n-1)\lambda_{1}+n(-1,k)}\ \ \mbox{if}\ \ \lambda_{12}=1
	\end{equation}
	or
	\begin{equation}\label{forma3}
		S(\underline{t})=\underline{t}^{\lambda_{1}}+a\underline{t}^{(n-1)\lambda_{1}+n(-1,-1)}+\sum_{k\geq 0}a_{k}\underline{t}^{(n-1)\lambda_{1}+n(k,-1)}+\sum_{l\geq 0}b_{l}\underline{t}^{(n-1)\lambda_{1}+n(-1,l)} \ \  \mbox{if}\ \lambda_{12}=2.
	\end{equation}
	
	If $H$ is as (\ref{forma2}) and $E_{\mathcal{Z}}(H)=\{(n-1)\lambda_{1}+n(-1,i)\}$ then, similarly to Proposition \ref{proposicao1}, we conclude that, $H$ is formally $\tilde{\mathcal{A}}$-equivalent to 
		$\left ( t_1^5,t_2^5,\underline{t}^{\lambda_{1}}+\underline{t}^{4\lambda_{1}+5(-1,i)}\right )$ with $i\in\mathbb{N}$.
		
If in (\ref{forma3}) we have
		\begin{enumerate}
			\item $a\neq 0$, then $E_{\mathcal{Z}}(H)=\{\delta=(n-1)\lambda_{1}+n(-1,-1)\}$ and any $\gamma\in Q_1=\mathbb{Z}\lambda_{1}+n\mathbb{Z}^2$ with $\gamma\succ\delta$ belongs to the set $\Gamma\cup\{\delta+k\nu_1;\ k>0\}\cup\{\delta+k\nu_2;\ k>0\}$.
			\item $a=0$, $a_{c_2}=0$ for all $c_2\geq 0$ and $E_{\mathcal{Z}}(H)=\{\delta=(n-1)\lambda_{1}+n(i,-1)\}$ for some $i\geq 0$, then $\gamma\in Q_1=\mathbb{Z}\lambda_{1}+n\mathbb{Z}^2$ with $\gamma\succ\delta$ belongs to the set $\Gamma\cup\{\delta+k\nu_1;\ k>0\}$.
			\item $a=0$, $a_{c_1}=0$ for all $c_1\geq 0$ and $E_{\mathcal{Z}}(H)=\{\delta=(n-1)\lambda_{1}+n(-1,j)\}$ for some $j\geq 0$, then $\gamma\in Q_1=\mathbb{Z}\lambda_{1}+n\mathbb{Z}^2$ with $\gamma\succ\delta$ belongs to the set $\Gamma\cup\{\delta+k\nu_2;\ k>0\}$.	
		\end{enumerate} 
	
In the above cases we have one generalized Zariski exponent $\delta$ and, as Proposition \ref{proposicao1}, we obtain that $H$ is formally $\tilde{\mathcal{A}}$-equivalent to $(t_1^n, t_2^n, \underline{t}^{\lambda_{1}}+\underline{t}^{\delta})$.
	
If $a=0$ and $E_{\mathcal{Z}}(H)=\{\delta_{1}=(n-1)\lambda_{1}+n(i,-1),\delta_{2}=(n-1)\lambda_{1}+n(-1,j)\}$ for some $i,j\in\mathbb{N}$, then 	$\{\gamma\succ\delta_1;\ \gamma\in Q_1\}\cup \{\gamma\succ\delta_2;\ \gamma\in Q_1\}\subset  \Gamma\cup\{\delta_1+n(k,0),\ k>0\}\cup\{\delta_{2}+n(0,k),\ k>0\}$. In this case, we can apply (possibly infinite many times) Proposition \ref{normalize-prop}, Lemma \ref{elim-zariski3}, Proposition \ref{normalize} and we conclude that $H$ is formally $\tilde{\mathcal{A}}$-equivalent to 
		$\left ( t_1^n,t_2^n,\underline{t}^{\lambda_{1}}+\underline{t}^{(n-1)\lambda_{1}+n(i,-1)}+\underline{t}^{(n-1)\lambda_{1}+n(-1,j)}\right ) $. 
	
	{\bf Case $n=4$:} Recall that we get $0<\lambda_{12}\leq\lambda_{1}<\frac{2n}{n-2}=4$, that is $\lambda_{12}\leq\lambda_{11}<4=n$ and any possible generalized Zariski exponent is expressed as $\delta=3\lambda_{1}+4(c_1,c_2)\succ\lambda_{1}$ with $c_1<0$ or $c_2<0$.
	
	If $\lambda_{12}=1$ then we must have $c_1= -1$ and $c_2\geq 0$, that is, we can consider the quasi-short parameterization given as (\ref{forma2}). In addition, if $E_{\mathcal{Z}}(H)\neq\{\underline{\infty}\}$ we can proceed as Proposition \ref{proposicao1} and to conclude that $H$ is formally $\tilde{\mathcal{A}}$-equivalent to $\left ( t_1^4,t_2^4,\underline{t}^{\lambda_{1}}+\underline{t}^{3\lambda_{1}+4(-1,i)}\right )$ with $i\in\mathbb{N}$.
	
	Recall we do not have $\lambda_{11}=\lambda_{12}=2$ otherwise, the value semigroup is not $\langle (n,0), (0,n), \lambda_{11}\rangle$. Excluding this case, if $1< \lambda_{12}\leq\lambda_{11}\leq 3$ then $\delta\in E_{\mathcal{Z}}(H)$ if and only if 
	$\delta = 3\lambda_{1}+4(c_1,c_2)$ with $c_1=-1$ or $c_2=-1$. So, the quasi-short parameterization is given as (\ref{forma3}) and we can proceed in the same way as the case $n=5$ and $\lambda_{1}=(2,2)$.
	
	{\bf Case $n=3$:} Notice that we get $0<\lambda_{12}\leq\lambda_{1}<\frac{2n}{n-2}=6$, that is $\lambda_{12}\leq\lambda_{11}\in\{2,3,4,5\}$ with the restriction $\lambda_{12}<3$ if $\lambda_{11}=3$ in order to obtain $\Gamma=\langle (3,0), (0,3), \lambda_{1}\rangle$.
	
	Any $\lambda_{1}\prec\gamma\in Q_1$ with $\gamma\notin\Gamma$ is expressed, by Remark \ref{std-rep}, as $\gamma=2(\lambda_{11},\lambda_{12})+3(c_1,c_2)\succ\lambda_{1}=(\lambda_{11},\lambda_{12})$ and considering the possible values for $\lambda_{1i}$ we obtain 
	$$c_{i}\geq 0\ \ \mbox{if}\ \ \lambda_{1i}<3\ \ \ \ \ \mbox{and}\ \ \ \ \ c_i\geq -1\ \ \mbox{if}\ \ \lambda_{1i}\geq 3.$$
	As the possible generalized Zariski exponents do not belong to $\Gamma\bigcup_{1\leq i\leq 2\atop \lambda_{1i}\geq 3}(\Gamma+2\lambda_{1}-\nu_i)$, we get
	$$E_{\mathcal{Z}}(H)=\{\underline{\infty}\}\ \ \mbox{if}\ 0<\lambda_{12}< 3\ \ \ \ \ \mbox{and}\ \ \ \ \ \ E_{\mathcal{Z}}(H)\subset\{2\lambda_{1}+3(-1,-1),\underline{\infty}\}\ \ \mbox{if}\ 3\leq\lambda_{12}.$$ If $E_{\mathcal{Z}}(H)=\{2\lambda_{1}+3(-1,-1)\}$ then $\gamma\succ 2\lambda_{1}+3(-1,-1)$ belongs to $\Gamma\bigcup_{1\leq i\leq 2\atop \lambda_{1i}\geq 3}(\Gamma+2\lambda_{1}-\nu_i)$ and it can be eliminable. In addition, Proposition \ref{normalize} allows us to normalize the coefficient of the term with the generalized Zariski exponent. Consequently, $H$ is formally $\tilde{\mathcal{A}}$-equivalent to
	$$(t_1^3,t_2^3,\underline{t}^{\lambda_{1}})\ \ \mbox{for}\ \ \lambda_{12}< 3\ \ \ \ \ \ \mbox{or}\ \ \ \ \ \ (t_1^3,t_2^3,\underline{t}^{\lambda_{1}}+a\underline{t}^{2\lambda_{1}+3(-1,-1)});\ a\in\{0,1\}\ \ \mbox{for}\ \ 3\leq\lambda_{12}.$$
\end{proof}

Now we will consider $0<\lambda_{12}< \frac{2n}{n-2}\leq \lambda_{11} < \frac{3n}{n-2}$. 

Notice that for $n\geq 5$ we obtain $\lambda_{11}<\frac{3n}{n-2}\leq 5\leq n$ and $\lambda_{12}<\frac{2n}{n-2}<n$. So, in this case, in a quasi-short parameterization $H=(t_1^n,t_2^n,S(\underline{t}))$ with $n\geq 5$ we get $\gamma\in supp(S(\underline{t})-\underline{t}^{\lambda_{1}})$ implies $\gamma\not\in\Gamma$ and, by Remark \ref{std-rep}, any $\gamma\not\in\Gamma$ can be expressed as $\gamma=c_3\lambda_1+n(c_1,c_2)$ with $0\leq c_3<n$ where $c_1<0$ or $c_2<0$.

For $6\leq n\leq 7$ we obtain $3\leq\lambda_{11}\leq 4$ and $\lambda_{11}=3$ for $n\geq 8$. In this way, for $n\geq 6$ all the elements in (\ref{zzz})
can be considered as generalized Zariski exponents. 

If $n=5$ then $3<\frac{2n}{n-2}\leq\lambda_{11}<\frac{3n}{n-2}=5$, that is, $\lambda_{11}=4$ and $0<\lambda_{12}<\frac{2n}{n-2}=\frac{10}{3}$. In this case, $4\lambda_1+5(-2,1), 4\lambda_{1}+5(-1,0)$ and $3\lambda_{1}+5(-1,1)$ can be generalized Zariski exponents.

So, for $0<\lambda_{12}<\frac{2n}{n-2}\leq \lambda_{11}<\frac{3n}{n-2}$ and $n\geq 5$ it is possible to get three generalized Zariski exponents and consequently, we can not have quasi-simple surfaces.

\begin{proposition}\label{proposicao3}
	Let $H=(t_{1}^{n}, t_{2}^{n}, S(t_{1},t_{2}))$ be a quasi-short parameterization with $0<\lambda_{12}< \frac{2n}{n-2}\leq \lambda_{11}<\frac{3n}{n-2}$, then $H$ is quasi-simple if and only if 
	\begin{enumerate}
		\item[a)] $\Gamma_H=\langle (3,0), (0,3), \lambda_1\rangle$  where 
		$1\leq \lambda_{12}\leq 5< \lambda_{11}\leq 8$ with $\lambda_{1}=(\lambda_{11},\lambda_{12})\neq (6,3)$. In this case $H$ is formally $\tilde{\mathcal{A}}$-equivalent to
		$$\left (t_1^3,t_2^3,\underline{t}^{\lambda_{1}}+a\underline{t}^{2\lambda_{1}+3(-2,i)}\right ); \ a\in\{0,1\}\ \mbox{and}\ i\geq 0\ \ \mbox{if}\ \ \lambda_{12}\in\{1,2\}\ \ \mbox{or}$$
		$$\left (t_1^3,t_2^3,\underline{t}^{\lambda_{1}}+a\underline{t}^{2\lambda_{1}+3(-1,-1)}+b\underline{t}^{2\lambda_{1}+3(-2,i)}\right ); \ a,b\in\{0,1\}, i\geq -1\\ \ \mbox{if}\ \ 3\leq\lambda_{12}\leq 5,$$
		with $a=0$ if $i=-1$ and $b=1$.	
		
		\item[b)] $\Gamma_H=\langle (4,0), (0,4), \lambda_{1}\rangle$ where $\lambda_{12}\in\{1,2,3\}$ and $\lambda_{11}\in\{4,5\}$ with $(\lambda_{11},\lambda_{12})\neq(4,2)$. In this case, 
		$H$ is formally $\tilde{\mathcal{A}}$-equivalent to 
		$$\left (t_1^4,t_2^4,\underline{t}^{\lambda_{1}}+a\underline{t}^{3\lambda_{1}+4(-2,i)}\right );\ a\in\{0,1\},\ i\in\mathbb{N}\ \ \mbox{if}\ \ \lambda_{12}=1\ \ \mbox{or} $$
		$$
	\left (t_1^4,t_2^4,\underline{t}^{\lambda_{1}}+a\underline{t}^{3\lambda_{1}+4(-2,i)}+b\underline{t}^{3\lambda_{1}+4(j,-1)}\right );\ \ \mbox{if}\ \ \lambda_{12}\in\{2,3\}$$
	with $a,b\in\{0,1\}$, $i, j\geq -1$ where $b=0$ if $i=-1$ and $a=1$.
	\end{enumerate} 	
\end{proposition}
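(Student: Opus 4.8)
The proof follows the same pattern as those of Propositions \ref{proposicao1} and \ref{proposicao2}, and I would organize it in four steps.

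\textbf{Step 1 (reduction and the ``only if'' part).} The discussion immediately preceding the statement already shows that for every $n\ge 5$ and every $\lambda_1$ with $0<\lambda_{12}<\frac{2n}{n-2}\le\lambda_{11}<\frac{3n}{n-2}$ there are three pairwise $\preceq$-incomparable elements lying $\succ\lambda_1$ and outside $\Gamma\cup\bigcup_{\lambda_{1i}\ge n}(\Gamma+2\lambda_1-\nu_i)$ (the triple in (\ref{zzz}) when $n\ge 6$, and $4\lambda_1+5(-2,1)$, $4\lambda_1+5(-1,0)$, $3\lambda_1+5(-1,1)$ when $n=5$). As in the argument after Example \ref{notquasisimple}, three such ``Zariski candidates'' produce an uncountable moduli, so a quasi-simple $H$ in the range considered must have $n\in\{3,4\}$. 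For $n=4$ the inequalities force $\lambda_{12}\in\{1,2,3\}$, $\lambda_{11}\in\{4,5\}$, and for $n=3$ they force $\lambda_{12}\in\{1,\dots,5\}$, $\lambda_{11}\in\{6,7,8\}$; among these, $(\lambda_{11},\lambda_{12})=(4,2)$ and $\lambda_1=(6,3)$ have $|Q_1:Q_0|<n$, hence they do not give a q.o.h.\ with a single characteristic exponent and semigroup $\langle(n,0),(0,n),\lambda_1\rangle$ and are excluded by hypothesis, while all remaining pairs satisfy the normalization conditions of Section 2 automatically (here $\lambda_{12}>0$). What is left is precisely the lists a) and b), which settles the ``only if'' direction.

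\textbf{Step 2 (the shape of a quasi-short parameterization).} Fix an admissible $(n,\lambda_1)$. By Remark \ref{std-rep} every $\gamma\in Q_1\setminus\Gamma$ is $\gamma=c_3\lambda_1+n(c_1,c_2)$ with $0\le c_3<n$ and $c_1<0$ or $c_2<0$. Imposing $\gamma\succ\lambda_1$ forces $c_3=n-1$ (the values $c_3\le n-2$ give either $\gamma\in\Gamma$ or $\gamma\in\Gamma+2\lambda_1-\nu_1$), and then $\lambda_{11}<\frac{3n}{n-2}$, $\lambda_{12}<\frac{2n}{n-2}$ bound $c_1,c_2$; discarding the exponents lying in $\Gamma\cup(\Gamma+2\lambda_1-\nu_1)$, and in $\Gamma+2\lambda_1-\nu_2$ when $\lambda_{12}\ge n$ (which happens only for $n=3$, $\lambda_{12}\in\{3,4,5\}$), one is left with the one or two ``shells'' appearing in the normal forms: $\{2\lambda_1+3(-2,i)\}$ and, when $\lambda_{12}\ge 3$, also $\{2\lambda_1+3(-1,-1)\}$ in the case $n=3$; $\{3\lambda_1+4(-2,i)\}$ and, when $\lambda_{12}\ge 2$, also $\{3\lambda_1+4(j,-1)\}$ in the case $n=4$. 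Within each shell the exponents are pairwise $\preceq$-comparable, so $E_{\mathcal Z}(H)$ has at most two elements; when it has two they are incomparable, hence $\{\delta_1-\lambda_1,\delta_2-\lambda_1\}$ is linearly independent (two incomparable vectors of $\mathbb N^2$ are never proportional), so Proposition \ref{normalize} normalizes both of their coefficients to $1$, and likewise a single Zariski coefficient to $1$.

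\textbf{Step 3 (elimination and the ``if'' part).} For each admissible $(n,\lambda_1)$ and each possibility $E_{\mathcal Z}(H)\in\{\{\underline\infty\},\{\delta\},\{\delta_1,\delta_2\}\}$ one checks, exactly the kind of computation made in Propositions \ref{proposicao1} and \ref{proposicao2}, that every $\gamma\in Q_1$ strictly above the Zariski exponent(s) lies in $\Gamma\cup(\Gamma+2\lambda_1-\nu_1)\cup(\Gamma+2\lambda_1-\nu_2)\cup\{\delta_1+k\nu_1,\delta_1+k\nu_2,\delta_2+k\nu_1,\delta_2+k\nu_2:k>0\}$, the last family being the genuine obstruction handled by Lemmas \ref{elim-zariski1}, \ref{elim-zariski2}, \ref{elim-zariski3} of Section \ref{lemas}. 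Granting this, Proposition \ref{normalize-prop} together with those lemmas eliminates, through possibly infinitely many coordinate changes (i.e.\ up to formal $\widetilde{\mathcal A}$-equivalence), every term above the Zariski exponent(s), and Proposition \ref{normalize} fixes the surviving coefficients, yielding precisely the normal forms displayed in a) and b). Since such a form is determined by a pair of integers and two or three coefficients in $\{0,1\}$, the topological class of $H$ contains only countably many $\widetilde{\mathcal A}$-classes, so $H$ is quasi-simple.

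\textbf{Main obstacle.} The heart of the argument, and the place where the hypotheses on $\lambda_1$ are really used, is Step 2 together with the membership check in Step 3: one must carve out the admissible shells for each of the finitely many admissible pairs $(n,\lambda_1)$ and, for every $\gamma$ above a Zariski exponent, decide whether it lies in $\Gamma\cup(\Gamma+2\lambda_1-\nu_i)\cup\{\delta+k\nu_i\}$, since this is what licenses the (possibly infinite) sequence of eliminations. The arithmetic of $Q_1=\lambda_1\mathbb Z+n\mathbb Z^2$ must be handled with care for each semigroup, and a single miscounted shell would produce a third Zariski exponent and destroy quasi-simplicity; the structural steps (reduction, normalization via Proposition \ref{normalize}, elimination via the lemmas of Section \ref{lemas}) are then routine given Sections 3 and \ref{lemas}.
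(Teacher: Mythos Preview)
Your proposal is correct and follows essentially the same approach as the paper: reduce to $n\in\{3,4\}$ using the discussion preceding the statement, determine the possible Zariski exponents via Remark \ref{std-rep} and the restriction $\gamma\notin\Gamma\cup\bigcup_{\lambda_{1i}\ge n}(\Gamma+2\lambda_1-\nu_i)$, and then eliminate and normalize using Proposition \ref{normalize-prop}, Proposition \ref{normalize} and the lemmas of Section \ref{lemas}. The paper organizes the argument by the two cases $n=3$ and $n=4$ and carries out the shell computations explicitly, whereas you phrase it as a general scheme and leave the explicit membership checks as routine; both arrive at the same normal forms by the same mechanism.
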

\begin{proof}
	By the comment before the proposition, it is sufficient to consider $3\leq n\leq 4$.
	
	{\bf Case $n=3$:} The conditions $0<\lambda_{12}< \frac{2n}{n-2}\leq \lambda_{11}<\frac{3n}{n-2}$ give us $0<\lambda_{12}\leq 5$ and $6\leq \lambda_{11}\leq 8$. Recall that to obtain $\Gamma=\langle (3,0),(0,3),\lambda_{1}\rangle$ we must have $\lambda_{1}\neq (6,3)$.
	
	Taking $\gamma\not\in\Gamma$ we can write $\gamma=c_3\lambda_1+4(c_1,c_2)$ with $0\leq c_3<3$ where $c_1<0$ or $c_2<0$. To obtain $\gamma\succ\lambda_{1}$ it is sufficient to consider $c_3=2$, $c_1\geq -2$ and 
	$$c_2\geq 0\ \ \mbox{for}\ \ \lambda_{12}\in\{1,2\}\ \ \ \ \  \mbox{or}\ \ \ \ \  c_2\geq -1\ \ \mbox{for}\ \ \lambda_{12}\in\{3,4,5\}.$$
	
	In this case, an exponent $\gamma\succ\lambda_{1}$ in a quasi-short parameterization is such that
	$$\gamma\not\in\Gamma\cup(\Gamma+2\lambda_{1}-(3,0))\ \ \mbox{if}\ \ \lambda_{12}<3\ \ \ \ \mbox{or} \ \ \ \ \ \gamma\not\in\Gamma\cup(\Gamma+2\lambda_{1}-(3,0))\cup (\Gamma+2\lambda_{1}-(0,3))\ \ \mbox{if}\ \ \lambda_{12}\geq 3.$$
	So, any quasi-simple parameterization with $n=3$ and $0<\lambda_{12}< \frac{2n}{n-2}\leq \lambda_{11}<\frac{3n}{n-2}$ is formally $\tilde{\mathcal{A}}$-equivalent to
	$$\left (t_1^3,t_2^3,\underline{t}^{\lambda_{1}}+\sum_{k\geq 0}a_{k}\underline{t}^{2\lambda_{1}+3(-2,k)}\right )\ \ \mbox{if}\ \ \lambda_{12}\in\{1,2\}\ \ \mbox{or}$$
	$$\left (t_1^3,t_2^3,\underline{t}^{\lambda_{1}}+a\underline{t}^{2\lambda_{1}+3(-1,-1)}+\sum_{k\geq -1}a_{k}\underline{t}^{2\lambda_{1}+3(-2,k)}\right )\ \ \mbox{if}\ \ \lambda_{12}\in\{3,4,5\}.$$
	
	As we proceeded in Proposition \ref{proposicao1}, we conclude that $H$ is formally $\tilde{\mathcal{A}}$-equivalent to 
	$$\left (t_1^3,t_2^3,\underline{t}^{\lambda_{1}}+a\underline{t}^{2\lambda_{1}+3(-2,i)}\right )\ a\in\{0,1\},\ i\in\mathbb{N}\ \ \ \mbox{if}\ \ 0<\lambda_{12}<3\ \ \mbox{or}$$
	$$\left (t_1^3,t_2^3,\underline{t}^{\lambda_{1}}+a\underline{t}^{2\lambda_{1}+3(-1,-1)}+b\underline{t}^{2\lambda_{1}+3(-2,i)}\right ),\ a,b\in\{0,1\},\ i\geq -1\ \ \ \mbox{if}\ \ 3\leq\lambda_{12}\leq 5,$$
	with $a=0$ if $i=-1$.
	
	{\bf Case $n=4$:} As $0<\lambda_{12}< \frac{2n}{n-2}\leq \lambda_{11}<\frac{3n}{n-2}$ we get $1\leq\lambda_{12}\leq 3$ and $4\leq\lambda_{11}\leq 5$ with $\lambda_{1}\neq (4,2)$. So, in a quasi-short parameterization it is sufficient consider exponents $\gamma\succ\lambda_{1}$ such that $\gamma\not\in\Gamma\cup (\Gamma+2\lambda_{1}-\nu_1)$.
	
	By Remark \ref{std-rep}, any $\gamma\not\in\Gamma$ can be expressed $\gamma=c_3\lambda_1+4(c_1,c_2)$ with $0\leq c_3<4$ where $c_1<0$ or $c_2<0$. In order to $\gamma\succ\lambda_{1}$ we have to consider $c_3\in\{2,3\}$.
	
	For $c_3=2$ we have
	$\gamma=2(\lambda_{11},\lambda_{12})+4(c_1,c_2)\succ (\lambda_{11},\lambda_{12})$ if and only if $c_1\geq -1$ and $c_2\geq 0$. In this case, $\gamma\in \Gamma\cup (\Gamma+2\lambda_{1}-\nu_1)$ and we can discard such exponents in a quasi-short parameterization.
	
	Taking $c_3=3$ we have
	$\gamma=3(\lambda_{11},\lambda_{12})+4(c_1,c_2)\succ (\lambda_{11},\lambda_{12})$ if and only if $c_1\geq -2$ and 
	$$c_2\geq 0\ \ \mbox{for}\ \ \lambda_{12}=1\ \ \ \ \  \mbox{or}\ \ \ \ \  c_2\geq -1\ \ \mbox{for}\ \ \lambda_{12}\in\{2,3\}.$$
	If $c_1\geq -1$ and $c_2\geq 0$ we obtain $\gamma\in \Gamma\cup (\Gamma+2\lambda_{1}-\nu_1)$. So, any quasi-short parameterization with $n=4$ and $0<\lambda_{12}<\frac{2n}{n-2}=4\leq\lambda_{11}<\frac{3n}{n-2}=6$ is $\tilde{\mathcal{A}}$-equivalent to
	\begin{equation}\label{4-1}
		\left (t_1^4,t_2^4,\underline{t}^{\lambda_{1}}+\sum_{k\geq 0}a_{k}\underline{t}^{3\lambda_{1}+4(-2,k)}\right )\ \ \mbox{if}\ \ \lambda_{12}=1;\end{equation}
	\begin{equation}\label{4-2}
	\left (t_1^4,t_2^4,\underline{t}^{\lambda_{1}}+\sum_{k\geq -1}b_{k}\underline{t}^{3\lambda_{1}+4(-2,k)}+\sum_{l\geq -1}c_{l}\underline{t}^{3\lambda_{1}+4(l,-1)}\right )\ \ \mbox{if}\ \ \lambda_{12}\in\{2,3\}.\end{equation}
	In addition,  
	\begin{enumerate}
		\item If $E_{\mathcal{Z}}(H)=\{\delta=3\lambda_{1}+4(-2,i)\}$ with $i\in\mathbb{N}$ in (\ref{4-1}) or
		\item If $E_{\mathcal{Z}}(H)=\{\delta=3\lambda_{1}+4(-2,i)\}$ with $i\geq -1$ in (\ref{4-2}) or
			\item If $E_{\mathcal{Z}}(H)=\{\delta=3\lambda_{1}+4(i,-1)\}$ with $i\geq -1$ in (\ref{4-2})
	\end{enumerate}
	then, by Lemma \ref{elim-zariski1} and Proposition \ref{normalize-prop}, we conclude that $H$ is formally $\tilde{\mathcal{A}}$-equivalent to 
		$\left (t_1^4,t_2^4,\underline{t}^{\lambda_{1}}+\underline{t}^{\delta}\right )$. 
	
	On the other hand, if it is not the case and $E_{\mathcal{Z}}(H)\neq\{\underline{\infty}\}$ then in (\ref{4-2}) we have $b_{-1}=0$ and $E_{\mathcal{Z}}=\{3\lambda_{1}+4(-2,i), 3\lambda_{1}+4(j,-1)\}$. In this case, we proceed as Proposition \ref{proposicao1} and $H$ is formally $\tilde{\mathcal{A}}$-equivalent to 
	$\left (t_1^4,t_2^4,\underline{t}^{\lambda_{1}}+\underline{t}^{3\lambda_{1}+4(-2,i)}+\underline{t}^{3\lambda_{1}+4(j,-1)}\right )$.
	
\end{proof}

We summarize the results of this section in the following theorem.

\begin{theorem} Let $H=(t_1^n,t_2^n, S(t_1,t_2))$ be a normalized q.o. parameterization with value semigroup $\Gamma=\langle (n,0),(0,n),\lambda_{1}=(\lambda_{11},\lambda_{12})\rangle$. Then $H$ is quasi-simple if and only if  we have one of the following cases:
	\begin{enumerate}
		\item[a)] $n=2$. In this case, $H$ is formally $\tilde{\mathcal{A}}$-equivalent to $\left( t_{1}^{2},t_{2}^{2},\underline{t}^{\lambda_{1}}\right)$.
		
		\item[b)] $\lambda_{1}=(1,1)$. In this case, $H$ is formally $\tilde{\mathcal{A}}$-equivalent to $\left( t_{1}^{n},t_{2}^{n},t_1t_2\right)$, i.e., a normal surface.
		
		\item[c)] $n=3$ and
		\begin{enumerate}
			\item[c.1)]  $1\leq\lambda_{12}\leq\lambda_{11}\in\{2,3,4,5\}$ with $\lambda_{1}\neq (3,3)$. In this case 
			$H$ is formally $\tilde{\mathcal{A}}$-equivalent to $$(t_1^3,t_2^3,\underline{t}^{\lambda_{1}})\ \ \mbox{if}\ \  \lambda_{12}\in\{1,2\}\ \ \ \ \ \mbox{or}\ \ \ \ \ (t_1^3,t_2^3,\underline{t}^{\lambda_{1}}+a\underline{t}^{2\lambda_{1}+3(-1,-1)});\ a\in\{0,1\}\ \ \mbox{if}\ \ \ 3\leq\lambda_{12}.$$
			\item[c.2)]	$1\leq\lambda_{12}\leq 5< \lambda_{11}\leq 8$ with $\lambda_{1}\neq (3,6)$. In this case $H$ is formally $\tilde{\mathcal{A}}$-equivalent to
			$$\left (t_1^3,t_2^3,\underline{t}^{\lambda_{1}}+a\underline{t}^{2\lambda_{1}+3(-2,i)}\right )\ a\in\{0,1\}\ \mbox{and}\ i\geq 0\ \ \mbox{if}\ \ \lambda_{12}\in\{1,2\}\ \ \mbox{or}$$
			$$\left (t_1^3,t_2^3,\underline{t}^{\lambda_{1}}+a\underline{t}^{2\lambda_{1}+3(-1,-1)}+b\underline{t}^{2\lambda_{1}+3(-2,i)}\right )\ a,b\in\{0,1\}, i\geq -1\\ \ \mbox{if}\ \ \lambda_{12}\in\{3,4,5\},$$
			with $a=0$ if $i=-1$ e $b=1$.	
			
			\item[c.3)] $0\leq \lambda_{12}\leq 2$ and $9\leq\lambda_{11}\leq 11$ with $(\lambda_{11},\lambda_{12})\neq (9,0)$. In this case $H$ is formally $\tilde{\mathcal{A}}$-equivalent to		
			$$\left( t_{1}^{3},t_{2}^{3},\underline{t}^{\lambda_{1}}+a\underline{t}^{2\lambda_{1}+3(-2,i)} +b\underline{t}^{2\lambda_{1}+3(-3,j)}\right)
			$$ where $a, b \in \{0,1\}$, $i, j\in \mathbb{N}$ with $a=0$ if $i\geq j$.	
			
		\end{enumerate}
		
		\item[d)] $n=4$ and
		\begin{enumerate}
			\item[d.1)] $\lambda_{1}=(\lambda_{11},1)$ where $2\leq \lambda_{11}\leq 3$. In this case 
			$H$ is formally $\tilde{\mathcal{A}}$-equivalent to $$\left( t_{1}^{4},t_{2}^{4},\underline{t}^{\lambda_{1}}+a\underline{t}^{3\lambda_{1}+4(-1,i)}\right)\ \  
			\mbox{where}\ a\in\{0,1\}\ \mbox{and}\ i\in\mathbb{N}.$$
			\item[d.2)] $2\leq\lambda_{12}\leq\lambda_{11}\leq 3$ with $(\lambda_{11},\lambda_{12})\neq (2,2)$. In this case 
			$H$ is formally $\tilde{\mathcal{A}}$-equivalent to $$\left( t_{1}^{4},t_{2}^{4},\underline{t}^{\lambda_{1}}+a\underline{t}^{3\lambda_{1}+4(-1,-1)}+b\underline{t}^{3\lambda_{1}+4(i,-1)}+c\underline{t}^{3\lambda_{1}+4(-1,j)}\right)$$ 
			where $a,b,c\in\{0,1\}$, $i, j\in\mathbb{N}$ and $b=c=0$ if $a=1$.
			\item[d.3)] $\lambda_{12}\in\{1,2,3\}$ and $\lambda_{11}\in\{4,5\}$ with $(\lambda_{11},\lambda_{12})\neq(4,2)$. In this case, 
			$H$ is formally $\tilde{\mathcal{A}}$-equivalent to 
			$$\left (t_1^4,t_2^4,\underline{t}^{\lambda_{1}}+a\underline{t}^{3\lambda_{1}+4(-2,i)}\right );\ a\in\{0,1\},\ i\in\mathbb{N}\ \ \mbox{if}\ \ \lambda_{12}=1\ \ \mbox{or} $$
			$$
			\left (t_1^4,t_2^4,\underline{t}^{\lambda_{1}}+a\underline{t}^{3\lambda_{1}+4(-2,i)}+b\underline{t}^{3\lambda_{1}+4(j,-1)}\right );\ \ \mbox{if}\ \ \lambda_{12}\in\{2,3\}$$
			with $a,b,c\in\{0,1\}$, $i,j\geq -1$ whrere $b=0$ if $i=-1$ and $a=1$.
			\item[d.4)] $\lambda_{12}\in\{0,1\}$ and $\lambda_{11}\in\{6,7\}$ with $(\lambda_{11},\lambda_{12})\neq (6,0)$. In this case, $H$ is formally $\tilde{\mathcal{A}}$-equivalent to
			$$
			\left( t_{1}^{4},t_{2}^{4},\underline{t}^{\lambda_{1}}+a\underline{t}^{3\lambda_{1}+4(-2,i)} +b\underline{t}^{3\lambda_{1}+4(-3,j)}\right)
			$$ 
			where $a, b \in \{0,1\}$, $i, j\in \mathbb{N}$ with $a=0$ if $i\geq j$.			
		\end{enumerate}
		
		\item[e)] $n=5$ and $\lambda_{1}\in\{(2,1),(3,1),(2,2)\}$. In this case 
		$H$ is formally $\tilde{\mathcal{A}}$-equivalent to	
		$$\left( t_{1}^{5},t_{2}^{5},\underline{t}^{(2,1)}+a\underline{t}^{4(2,1)+5(-1,i)}\right);\ \mbox{where}\ a\in\{0,1\}\ \mbox{and}\ i\in\mathbb{N};$$		
		$$\left( t_{1}^{5},t_{2}^{5},\underline{t}^{(3,1)}+a\underline{t}^{4(3,1)+5(-1,i)}+b\underline{t}^{3(3,1)+5(-1,j)}\right); \ a,b\in\{0,1\},\ i,j\in\mathbb{N}\ \mbox{with}\ b=0\ \mbox{if}\ j\leq i;$$	
		$$\left( t_{1}^{5},t_{2}^{5},\underline{t}^{(2,2)}+a\underline{t}^{4(2,2)+5(-1,-1)}+b\underline{t}^{4(2,2)+5(i,-1)}+c\underline{t}^{4(2,2)+5(-1,j) } \right);\ \mbox{where} \ a,b,c\in\{0,1\},\ $$ $i, j\in\mathbb{N}$ with $b=c=0$ if $a=1$.	
		
		\item[f)] $6\leq n\leq 7$ and $\lambda_{1}=(2,1)$. In this case 
		$H$ is formally $\tilde{\mathcal{A}}$-equivalent to $$\left( t_{1}^{n},t_{2}^{n},\underline{t}^{(2,1)}+a\underline{t}^{(n-1)(2,1)+n(-1,i)}+b\underline{t}^{(n-2)(2,1)+n(-1,j)}\right)$$ 
		where $a,b\in\{0,1\}$, $i, j\in\mathbb{N}$ with $b=0$ if $j\leq i$.
	\end{enumerate}
\end{theorem}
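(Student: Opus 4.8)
The plan is to assemble the statement from Examples~\ref{n=2} and \ref{normal}, Lemma~\ref{3-zariskis}, and Propositions~\ref{proposicao1}, \ref{proposicao2} and \ref{proposicao3}. Since quasi-simplicity depends only on the topological class, equivalently on $\Gamma=\langle(n,0),(0,n),\lambda_1\rangle$, and since (by Proposition~\ref{normalize-prop} and the discussion following it) any normalized q.o.\ parameterization with this semigroup is formally $\tilde{\mathcal{A}}$-equivalent to a quasi-short parameterization of the form (\ref{surface}), it suffices to classify quasi-short parameterizations. The case $n=2$ is exactly Example~\ref{n=2}, giving item~(a), and the case $\lambda_1=\underline{1}=(1,1)$ is Example~\ref{normal}, giving item~(b). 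From now on I would assume $n\geq 3$ and $\lambda_1\neq(1,1)$, and use Remark~\ref{unicidade-coef-zariski} together with Proposition~\ref{normalize} (as in the remark preceding Example~\ref{notquasisimple}): if $H$ is quasi-simple then $H$ has at most two generalized Zariski exponents, because the presence of a third one produces an uncountable family of pairwise $\tilde{\mathcal{A}}$-inequivalent parameterizations.

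Next I would apply Lemma~\ref{3-zariskis}: a quasi-short parameterization can admit three generalized Zariski exponents whenever $\lambda_{11}\geq\frac{4n}{n-2}$, or $\lambda_{12}\geq\frac{2n}{n-2}$, or $\lambda_{12}\geq\frac{n}{n-2}$ together with $\lambda_{11}\geq\frac{3n}{n-2}$. Hence a quasi-simple $H$ must satisfy $\lambda_{11}<\frac{4n}{n-2}$, $\lambda_{12}<\frac{2n}{n-2}$ and ($\lambda_{12}<\frac{n}{n-2}$ or $\lambda_{11}<\frac{3n}{n-2}$). I would then check that this admissible set of exponents is exhausted, with no overlap, by the hypotheses of the three propositions: separating $\lambda_{12}=0$ from $\lambda_{12}>0$ and, in the latter case, locating $\lambda_{11}$ below $\frac{2n}{n-2}$, in $[\frac{2n}{n-2},\frac{3n}{n-2})$, or in $[\frac{3n}{n-2},\frac{4n}{n-2})$, one lands respectively in Proposition~\ref{proposicao1} (its $\lambda_{12}=0$ branch, and its branch $\lambda_{12}<\frac{n}{n-2}$, $\frac{3n}{n-2}\leq\lambda_{11}<\frac{4n}{n-2}$ --- the last inequality then being forced), in Proposition~\ref{proposicao2} ($0<\lambda_{12}\leq\lambda_{11}<\frac{2n}{n-2}$), or in Proposition~\ref{proposicao3} ($0<\lambda_{12}<\frac{2n}{n-2}\leq\lambda_{11}<\frac{3n}{n-2}$). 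Invoking these propositions decides, in each case, whether $H$ is quasi-simple and, if so, provides its formal $\tilde{\mathcal{A}}$-normal form.

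It then remains to regroup the output of the three propositions by the value of $n$: the cases with $n=3$ yield items~(c.1)--(c.3), the cases with $n=4$ yield (d.1)--(d.4), the cases with $n=5$ (all coming from Proposition~\ref{proposicao2}) yield item~(e), and the case $\lambda_1=(2,1)$ with $6\leq n\leq 7$ (from Proposition~\ref{proposicao2}) yields item~(f); for $n\geq 8$ the three propositions show that no $\lambda_1\neq(1,1)$ is quasi-simple, so there is nothing further. The step I expect to be the main obstacle is not conceptual but organizational: verifying that the hypotheses of Lemma~\ref{3-zariskis} and of Propositions~\ref{proposicao1}--\ref{proposicao3} really cover every pair $(\lambda_{11},\lambda_{12})$ with $\Gamma=\langle(n,0),(0,n),\lambda_1\rangle$ and $n>2$, and that the normal forms listed in the propositions match, after the relabelling above, the ones stated in items (c)--(f).
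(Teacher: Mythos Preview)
Your proposal is correct and follows essentially the same approach as the paper: the paper's own proof is the single sentence ``It follows from Example~\ref{n=2}, Example~\ref{normal}, Proposition~\ref{proposicao1}, Proposition~\ref{proposicao2} and Proposition~\ref{proposicao3},'' and you have simply spelled out the bookkeeping behind that sentence (invoking Lemma~\ref{3-zariskis} to cut down the admissible $(\lambda_{11},\lambda_{12})$ region and then partitioning it among the three propositions, which is exactly how the paper sets things up in the text between Lemma~\ref{3-zariskis} and Proposition~\ref{proposicao3}).
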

\begin{proof}
	It follows from Example \ref{n=2}, Example \ref{normal}, Proposition \ref{proposicao1}, Proposition \ref{proposicao2} and Proposition \ref{proposicao3}.
\end{proof}

\section{Technical Lemmas}\label{lemas}

In this section, we present some technical lemmas that make use of notations and concepts presented in Section 2 concerning dominant exponent and differentials $2$-forms. We use these lemmas in Proposition \ref{proposicao1}, Proposition \ref{proposicao2}, and Proposition \ref{proposicao3} to obtain normal forms of quasi-simple surfaces concerning $\tilde{\mathcal{A}}$ group.

\begin{lemma}\label{elim-zariski1}
	If $H=\left ( t_1^n,t_2^n, \underline{t}^{\lambda_{1}}+a\underline{t}^{\delta}u(\underline{t})\right )$ is a q.o. parameterization where $a\neq 0$ and $u(\underline{0})=1$ then $\underline{t}^{\delta+\gamma}$ is eliminable for any $\gamma\in\Gamma\setminus\{(0,0)\}$.
\end{lemma}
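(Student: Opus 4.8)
The plan is to deduce this from Proposition \ref{elimination}: I would exhibit a form $\omega\in\Omega^{2}$ whose components $P_1,P_2,P_3$ have the shape prescribed in (\ref{cond-mud}) and for which $\mathcal V(\Psi_H(\omega))=\delta+\gamma+\underline n$. Since every coordinate of $\delta+\gamma+\underline n$ is $\geq n$, Proposition \ref{elimination} then yields that $\underline t^{\delta+\gamma}$ is eliminable by the $\tilde{\mathcal A}_1$-action, which is exactly the claim.

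To build $\omega$, I would first record the shape of $\Psi_H$ on a general such form. Using $H^*(X_j)=t_j^{n}$ for $j\le2$ and $H^*(X_3)=S=\underline t^{\lambda_1}+a\underline t^{\delta}u$, a direct computation from (\ref{Psi}) gives, for $\omega=\sum_{i=1}^{3}(-1)^{3-i}P_i\,dX_1\wedge\cdots\wedge\widehat{dX_i}\wedge\cdots\wedge dX_3$,
$$\Psi_H(\omega)=n^{2}\underline t^{\underline n}H^*(P_3)+\sum_{k=1}^{2}\varepsilon_k\,n\,\underline t^{\,\underline n-\nu_k}\Bigl(t_k\tfrac{\partial S}{\partial t_k}\Bigr)H^*(P_k),\qquad \varepsilon_k\in\{1,-1\}.$$
Since $\delta\neq\lambda_1$, I fix an index $k\in\{1,2\}$ with $\delta_k\neq\lambda_{1k}$, and choose $g\in\mathcal M_{3}$ with $\mathcal V(H^*(g))=\gamma$; this is possible because $\gamma\in\Gamma\setminus\{\underline 0\}=\mathcal V(Im(H^*))\setminus\{\underline 0\}$ and $H^*$ carries units to units, so $g$ can be taken a nonunit. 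Then I take $P_j=0$ for $j\notin\{k,3\}$, $P_k=X_k\,g$ and $P_3=-\varepsilon_k\tfrac{\lambda_{1k}}{n}X_3\,g$; all of these satisfy (\ref{cond-mud}) with $\eta_k=\eta_3=0$ and with the coefficient of $X_3$ in $P_3$ lying in $\mathcal M_3$. Substituting $H^*(P_k)=t_k^{n}H^*(g)$ (so that $\underline t^{\,\underline n-\nu_k}t_k^{n}=\underline t^{\underline n}$) and $H^*(P_3)=-\varepsilon_k\tfrac{\lambda_{1k}}{n}S\,H^*(g)$, the displayed formula collapses to $\Psi_H(\omega)=\varepsilon_k\,n\,\underline t^{\underline n}H^*(g)\bigl(t_k\tfrac{\partial S}{\partial t_k}-\lambda_{1k}S\bigr)$.

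The last step is to read off the dominant exponent. From $S=\underline t^{\lambda_1}+a\underline t^{\delta}u$ one gets $t_k\tfrac{\partial S}{\partial t_k}-\lambda_{1k}S=a\,\underline t^{\delta}\bigl((\delta_k-\lambda_{1k})u+t_k\tfrac{\partial u}{\partial t_k}\bigr)$, and the second factor has constant term $\delta_k-\lambda_{1k}\neq0$, hence is a unit, so this series has dominant exponent $\delta$. As $\underline t^{\underline n}$ is a monomial, $H^*(g)$ has dominant exponent $\gamma$, and a finite product of power series each having a single Newton vertex again has a single Newton vertex, namely the sum of theirs, I obtain $\mathcal V(\Psi_H(\omega))=\underline n+\gamma+\delta$, and Proposition \ref{elimination} concludes. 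I expect the only delicate point to be the cancellation of the $\underline t^{\lambda_1}$-part of $S$ against $t_k\tfrac{\partial S}{\partial t_k}$, which is precisely the role of the coupling $P_3=-\varepsilon_k\tfrac{\lambda_{1k}}{n}X_3\,g$; note that when $\lambda_{1k}=0$ (which includes the case $\lambda_{12}=0$) this coupling is vacuous and $P_3$ may be taken to be $0$.
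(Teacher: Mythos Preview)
Your proof is correct and follows essentially the same strategy as the paper: both construct a $2$-form $\omega$ with $\mathcal V(\Psi_H(\omega))=\delta+\gamma+\underline n$ by coupling an $X_k$-term with an $X_3$-term so that the $\underline t^{\lambda_1}$-contribution cancels, then invoke Proposition~\ref{elimination}. The paper works with the two-parameter family $\omega_0$ in (\ref{omega0}) (using both $s_1X_1$ and $s_2X_2$) and then multiplies by $\epsilon\in\mathcal M_3$ with $\mathcal V(H^*(\epsilon))=\gamma$, while you build the same object in one step by fixing a single index $k$ with $\delta_k\neq\lambda_{1k}$ and taking $P_k=X_k g$, $P_3=-\varepsilon_k\tfrac{\lambda_{1k}}{n}X_3 g$; this is a harmless simplification of the same idea.
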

\begin{proof}
	By Proposition \ref{elimination} it is sufficient to exhibit $\omega=\sum_{i=1}^{r+1}(-1)^{r+1-i}P_idX_{1}\wedge\cdots\wedge\widehat{dX_{i}}\wedge \cdots\wedge dX_{r+1} \in \Omega^{r}$ where $P_i$ is as described in (\ref{cond-mud}) for all $i=1,\ldots,r$ such that $\mathcal{V}(\omega)=\delta+\gamma+(n,n)$.
	
	Let us take 
	\begin{equation}\label{omega0}
		\omega_0=\frac{1}{n}\left (s_1X_1dX_2\wedge dX_3 +s_2X_2dX_1\wedge dX_3+\frac{(s_1\lambda_{11}-s_2\lambda_{12})}{n}X_3dX_1\wedge dX_2\right ).
	\end{equation}
	Considering the map $\Psi_H$ given in (\ref{Psi}) we get
	$$\Psi_H(\omega_0)=\left ( s_2(\delta_2-\lambda_{12})-s_1(\delta_{1}-\lambda_{11})\right )a\underline{t}^{\delta+(n,n)}u(\underline{t})+a\underline{t}^{\delta+(n,n)}(s_2t_2u_2(\underline{t})-s_1t_1u_1({\underline{t})}),$$
	where $u_i(\underline{t})$ denotes the derivative of $u(\underline{t})$ with respect to $t_i$.
	
	Since $t_iu_i(\underline{t})$ is not a unit, for any $\alpha\in supp(a\underline{t}^{\delta+(n,n)}(s_2t_2u_2(\underline{t})-s_1t_1u_{1}(\underline{t})))$ we have $\alpha\succ\delta+(n,n)$. Recall that $\delta_1\neq\lambda_{11}$ or $\delta_{2}\neq\lambda_{12}$ so, we can choose $s_1, s_2\in\mathbb{C}$ in a such way that $\mathcal{V}(\omega_0)=\delta+(n,n)$.
	
	Now, for any $\gamma\in\Gamma\setminus\{(0,0)\}$ we take $\epsilon\in\mathcal{M}_{3}$ such that $\mathcal{V}(\epsilon)=\gamma$ and, in this way, $\omega=\epsilon\omega_0\in\Omega^r$ satisfies the conditions (\ref{cond-mud}) and $\mathcal{V}(\omega)=\delta+\gamma+(n,n)$. Consequently, by Proposition \ref{elimination}, $\underline{t}^{\delta+\gamma}$ is eliminable by $\tilde{\mathcal{A}}$-action.
\end{proof}

Notice that if $\delta\in\Gamma\bigcup_{1\leq i\leq 2\atop \lambda_{1i}\geq n}(\Gamma+2\lambda_{1}-\nu_i)$ then the above lemma is a particular case of Proposition \ref{normalize-prop}. On the other hand if $E_{\mathcal{Z}}(H)=\{\delta\}$ then Lemma \ref{elim-zariski1} allows us to eliminate terms with exponent in $\delta+\Gamma\setminus\{(0,0)\}$ and it can be considered as the counterpart to the quasi-ordinary case of an elimination criterion of terms in plane curve parameterizations proved by Zariski (see Section 2.3, Chapter III in \cite{zariski-book}).

In the same way, the relevance of the next result corresponds to the case $E_{\mathcal{Z}}(H)=\{\delta_{1},\delta_{2}\}$.

\begin{lemma}\label{elim-zariski2}
	Let $H=\left ( t_1^n,t_2^n, S(\underline{t})=\underline{t}^{\lambda_{1}}+\sum_{j\geq 0}a_j\underline{t}^{\delta_1+j\cdot\nu_1}+\sum_{j\geq 0}b_j\underline{t}^{\delta_2+j\cdot\nu_1}\right )$ be a q. o. parameterization where $a_0\neq 0\neq b_0$, $\nu_1=(n,0)$ with $\min\{S(\underline{t})-\underline{t}^{\lambda_{1}}\}=\{\delta_1,\delta_2\}$. If $\{\delta_1-\lambda_{1},\delta_{2}-\lambda_{1}\}\subset \mathbb{R}^2$ is a linearly independent set then $\underline{t}^{\delta_1+j\cdot \nu_1}$ and $\underline{t}^{\delta_2+j\cdot\nu_1}$ are eliminable for any $j>0$. The same is true if we change $\nu_1$ by $\nu_2=(0,n)$.
\end{lemma}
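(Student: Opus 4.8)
The plan is to follow the strategy of the proof of Lemma~\ref{elim-zariski1}: for each $j>0$ and each $\mu\in\{\delta_1,\delta_2\}$ I will produce a form $\omega\in\Omega^{r}$ admissible for Proposition~\ref{elimination} (that is, with $P_1=X_1\epsilon_1$, $P_2=X_2\epsilon_2$, $P_3=X_3\epsilon_3$, all $\epsilon_i\in\mathcal{M}_3$ and all $\eta_i=0$) such that $\mathcal{V}(\Psi_H(\omega))=\mu+j\nu_1+\underline{n}$; Proposition~\ref{elimination} then yields immediately that $\underline{t}^{\mu+j\nu_1}$ is eliminable. The building blocks are the forms of (\ref{omega0}): for $v=(v_1,v_2)\in\mathbb{N}^2\setminus\{\lambda_1\}$ write $\omega_0^{v}$ for the form (\ref{omega0}) with $(s_1,s_2)=(v_2-\lambda_{12},\,v_1-\lambda_{11})$, so that $\omega_0^{v}$ depends on $v$ only through the direction of $v-\lambda_1$.

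First I would record the action of $\omega_0^{v}$ on $\Psi_H$. Since here $S(\underline{t})=\underline{t}^{\lambda_1}+\sum_k a_k\underline{t}^{\delta_1+k\nu_1}+\sum_k b_k\underline{t}^{\delta_2+k\nu_1}$ is a sum of monomials, the computation in Lemma~\ref{elim-zariski1} simplifies: the monomial $\underline{t}^{\lambda_1}$ of $S$ contributes $0$ to $\Psi_H(\omega_0^{v})$ (this is exactly the role of the coefficient $\frac{s_1\lambda_{11}-s_2\lambda_{12}}{n}$ of $X_3$ in (\ref{omega0})), while a monomial $c\,\underline{t}^{w}$ of $S$ with $w\neq\lambda_1$ contributes exactly $c\cdot\det(v-\lambda_1,\,w-\lambda_1)\,\underline{t}^{w+\underline{n}}$, where $\det(\cdot,\cdot)$ denotes the determinant of two vectors of $\mathbb{R}^2$. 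Combining this with the identity $\Psi_H(h\omega)=H^*(h)\Psi_H(\omega)$ (immediate from (\ref{Psi})) and with $\det(v-\lambda_1,\nu_1)=-n(v_2-\lambda_{12})$, hence $\det(v-\lambda_1,(v+k\nu_1)-\lambda_1)=-kn(v_2-\lambda_{12})$, one obtains: the contribution of the full $v$-tower $\sum_k c_k\underline{t}^{v+k\nu_1}$ of $S$ to $\Psi_H(\omega_0^{v})$ is $\underline{t}^{v+\underline{n}}$ times a power series in $t_1^{n}$ with \emph{vanishing} constant term, whereas the contribution of the complementary tower $\sum_k c'_k\underline{t}^{\mu'+k\nu_1}$ --- with $\{\mu'\}=\{\delta_1,\delta_2\}\setminus\{v\}$ --- is $\underline{t}^{\mu'+\underline{n}}$ times a power series in $t_1^{n}$ with constant term $c'_0\det(v-\lambda_1,\mu'-\lambda_1)$, a nonzero scalar because $c'_0\in\{a_0,b_0\}$ is nonzero by hypothesis and $\det(v-\lambda_1,\mu'-\lambda_1)=\pm D$ with $D:=\det(\delta_1-\lambda_1,\delta_2-\lambda_1)\neq 0$ by linear independence.

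Fix now $j>0$ and $\mu\in\{\delta_1,\delta_2\}$, and let $\mu'$ be the other element. I would take $\omega=X_1^{j}\,\omega_0^{\mu'}+\epsilon\,\omega_0^{\mu}$ and choose $\epsilon\in X_1\mathbb{C}\{X_1\}\subset\mathcal{M}_3$ so that the $\mu'$-parts of the two summands cancel; by the previous paragraph this amounts to prescribing $H^*(\epsilon)$ as $-\,t_1^{jn}$ times the $\mu'$-series of $\omega_0^{\mu'}$ divided by the $\mu'$-series of $\omega_0^{\mu}$, the divisor being a unit of $\mathbb{C}\{t_1^{n}\}$ (constant term $\pm c'_0 D$); such an $\epsilon$ exists and $H^*(\epsilon)$ is a power series in $t_1^{n}$ vanishing to order $\ge(j+1)n$. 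For this $\omega$ the $\underline{t}^{\lambda_1}$-part of $\Psi_H(\omega)$ and its $\mu'$-part both vanish, so $\Psi_H(\omega)$ reduces to its $\mu$-part, which equals $t_1^{jn}\,\underline{t}^{\mu+\underline{n}}$ times a unit (constant term $\pm(\text{coefficient of }\underline{t}^{\mu}\text{ in }S)\cdot D\neq 0$) plus a correction from $\epsilon\,\omega_0^{\mu}$ of $t_1$-order $\ge(j+2)n$; hence $\Psi_H(\omega)=\underline{t}^{\mu+j\nu_1+\underline{n}}\cdot(\text{unit})$, so $\mathcal{V}(\Psi_H(\omega))=\mu+j\nu_1+\underline{n}$, which has both coordinates $\ge n$. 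Finally $\omega$ is admissible: multiplying a coefficient $\frac{s_i}{n}X_i$ of (\ref{omega0}) by $X_1^{j}$ (with $j\ge1$) or by $\epsilon\in\mathcal{M}_3$ turns it into $X_i\epsilon_i$ with $\epsilon_i\in\mathcal{M}_3$, and likewise the coefficient of $X_3$ becomes $X_3\epsilon_3$ with $\epsilon_3\in\mathcal{M}_3$, while all $\eta_i=0$, so (\ref{cond-mud}) holds with no restriction on $\lambda_1$. By Proposition~\ref{elimination}, $\underline{t}^{\mu+j\nu_1}$ is eliminable; taking $\mu=\delta_1$ and then $\mu=\delta_2$ proves the statement. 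The case of $\nu_2=(0,n)$ in place of $\nu_1$ is identical, using $X_2^{j}$ instead of $X_1^{j}$ and $\det(v-\lambda_1,\nu_2)=n(v_1-\lambda_{11})$.

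The crux --- and the reason for the hypotheses --- is to make $\Psi_H(\omega)$ genuinely have a dominant exponent. The monomials of $S$ that generate terms of $\Psi_H(\omega_0^{v})$ which are $\prec\mu+j\nu_1+\underline{n}$ or incomparable with it are $\underline{t}^{\lambda_1}$, the first $j$ monomials of the $\mu$-tower, and the \emph{entire} $\mu'$-tower (whose $\Psi_H$-image lies on a horizontal line strictly below that of $\mu+j\nu_1+\underline{n}$, so it can never be dominated by that exponent). The choice of $(s_1,s_2)$ in $\omega_0^{v}$ annihilates the $\underline{t}^{\lambda_1}$-contribution and the leading monomial of the $v$-tower; the factor $X_1^{j}$ raises the surviving $\mu$-contribution to the correct level; and the power series $\epsilon$ cancels the $\mu'$-tower contribution \emph{completely}, which is possible precisely because linear independence ($D\neq0$) makes the relevant $\mu'$-series a unit. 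Achieving all three cancellations at once with a single admissible form is the whole point; what remains is the Newton polyhedron bookkeeping already done for Lemma~\ref{elim-zariski1}.
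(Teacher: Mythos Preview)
Your proof is correct and follows essentially the same route as the paper: both build the desired admissible $2$-forms from the one-parameter family $\omega_0$ of (\ref{omega0}), choose the parameters so as to kill the leading term of one tower, and then correct by a power series in $X_1$ times a second such form to annihilate the entire other tower, finally invoking Proposition~\ref{elimination}. Your $\omega_0^{\delta_2}$ and $\omega_0^{\delta_1}$ are scalar multiples of the paper's $\omega_1$ and $\omega_2$ (the second equation of each of their linear systems $(*)$, $(**)$ is exactly your condition $\det(v-\lambda_1,\mu'-\lambda_1)=0$), and your $\omega=X_1^{j}\omega_0^{\mu'}+\epsilon\,\omega_0^{\mu}$ is the paper's $X_1^{j}\bigl(\omega_1+\sum_{k>0}c_kX_1^{k}\omega_2\bigr)$ (respectively with $\omega_1,\omega_2$ swapped); the determinant bookkeeping is a pleasant repackaging but not a different argument.
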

\begin{proof}
	As the previous lemma, it is sufficient to guarantee the existence of differential $2$-forms that admit dominant exponent $\delta_1+(n,n)+j\cdot \nu_1$ and $\delta_2+(n,n)+j\cdot \nu_1$ for any $j>0$.
	
	Considering $\omega_0\in\Omega^r$ given as (\ref{omega0}) and the map $\Psi_H$ given in (\ref{Psi}) we get
	$$\begin{array}{ll}
		\Psi_H(\omega_0) = & \left ( s_2\cdot (\delta_{12}-\lambda_{12})-s_1\cdot (\delta_{11}-\lambda_{11})\right )a_0\underline{t}^{\delta_{1}+(n,n)}+ \\
		& 
		+\sum_{j>0}\left ( s_2\cdot (\delta_{12}-\lambda_{12})-s_1\cdot (\delta_{11}+jn-\lambda_{11})\right )a_j\underline{t}^{\delta_{1}+j\cdot\nu_1+(n,n)}+ \\
		& \left ( s_2\cdot (\delta_{22}-\lambda_{12})-s_1\cdot (\delta_{21}-\lambda_{11})\right )b_0\underline{t}^{\delta_{2}+(n,n)}+ \\
		& 
		+\sum_{j>0}\left ( s_2\cdot (\delta_{22}-\lambda_{12})-s_1\cdot (\delta_{21}+jn-\lambda_{11})\right )b_j\underline{t}^{\delta_{2}+j\cdot\nu_1+(n,n)}.
	\end{array}$$ 
	
	As $\{\delta_1-\lambda_{1},\delta_{2}-\lambda_{1}\}$ is a linearly independent set, the linear system equations
	$$(*)\ \left \{ \begin{array}{l} (\delta_{12}-\lambda_{12})\cdot Z - (\delta_{11}-\lambda_{11})\cdot W = \frac{1}{a_0} \\
		(\delta_{22}-\lambda_{12})\cdot Z - (\delta_{21}-\lambda_{11})\cdot W = 0 
	\end{array}\right .\ \ \mbox{and}\ \ (**)\ \left \{ \begin{array}{l} (\delta_{12}-\lambda_{12})\cdot Z - (\delta_{11}-\lambda_{11})\cdot W = 0 \\
		(\delta_{22}-\lambda_{12})\cdot Z - (\delta_{21}-\lambda_{11})\cdot W = \frac{1}{b_0} 
	\end{array}\right .$$
	admit solutions. Taking a solution $(Z,W)=(s_2,s_1)$ for the system $(*)$ and substituting in $\omega_0$ we get the differential $2$-form $\omega_1$ with
	$$\Psi_H(\omega_1)=\underline{t}^{\delta_{1}+(n,n)}
	+\sum_{j>0}a'_j\underline{t}^{\delta_{1}+j\cdot\nu_1+(n,n)}+\sum_{j>0}b'_j\underline{t}^{\delta_{2}+j\cdot\nu_1+(n,n)}$$
	and considering a solution $(Z,W)=(s_2,s_1)$ for the system $(**)$ and substituting in $\omega_0$ we obtain the differential $2$-form $\omega_2$ with
	$$\Psi_H(\omega_2)=\underline{t}^{\delta_{2}+(n,n)}
	+\sum_{j>0}a''_j\underline{t}^{\delta_{1}+j\cdot\nu_1+(n,n)}+\sum_{j>0}b''_j\underline{t}^{\delta_{2}+j\cdot\nu_1+(n,n)}$$
	for some $a'_j, a''_j, b'_j, b''_j\in\mathbb{C}$.
	
	In this way, there are $c_{k}, d_{k}\in\mathbb{C}$ for $k>0$ such that
	\begin{equation}\label{diff-x1}
			\mathcal{V}\left (\Psi_H\left (\omega_1+\sum_{k>0}c_kX_1^k\omega_2\right )\right )=\delta_{1}+(n,n)\ \ \mbox{and}\ \ \mathcal{V}\left (\Psi_H\left (\omega_2+\sum_{k>0}d_kX_1^k\omega_1\right )\right )=\delta_{2}+(n,n).\end{equation}
	
	As, for any $j>0$, the differential $2$-forms $$X_1^j\left (\omega_1+\sum_{k>0}c_kX_1^k\omega_2\right )\ \  \mbox{and}\ \ X_1^j\left (\omega_2+\sum_{k>0}d_kX_1^k\omega_1\right )$$
	satisfy the conditions (\ref{cond-mud}). By Proposition \ref{elimination}, any term $\underline{t}^{\delta_1+j\cdot \nu_1}$ and $\underline{t}^{\delta_2+j\cdot \nu_1}$ are eliminable by $\tilde{\mathcal{A}}$-action.
	
	If  $H=\left ( t_1^n,t_2^n, \underline{t}^{\lambda_{1}}+\sum_{j\geq 0}a_j\underline{t}^{\delta_1+j\cdot\nu_2}+\sum_{j\geq 0}b_j\underline{t}^{\delta_2+j\cdot\nu_2}\right )$ then interchange $X_1$ by $X_2$ in (\ref{diff-x1}) we obtain that any term $\underline{t}^{\delta_1+j\cdot \nu_2}$ and $\underline{t}^{\delta_2+j\cdot \nu_2}$ with $j>0$ are eliminable by $\tilde{\mathcal{A}}$-action.
	
\end{proof}

We note that the next lemma is a particular case of the Proposition \ref{normalize-prop} if $\lambda_{1i}\geq n$, but it gives us relevant information for the other cases.

\begin{lemma}\label{elim-zariski3}
	Let $H=\left ( t_1^n,t_2^n, \underline{t}^{\lambda_{1}}+\sum_{j\geq 0}a_j\underline{t}^{\delta_1+j\cdot\nu_1}+\sum_{j\geq 0}b_j\underline{t}^{\delta_2+j\cdot\nu_2}\right )$ be a q. o. parameterization where $\delta_{1}=(n-1)\lambda_{1}+n(i,-1), \delta_{2}=(n-1)\lambda_{1}+n(-1,j)$ with $a_0\neq 0\neq b_0$ and $i, j\in\mathbb{N}$. We have that the terms $\underline{t}^{\delta_1+j\cdot \nu_1}$ and $\underline{t}^{\delta_2+j\cdot\nu_2}$ are eliminable for any $j>0$. 
\end{lemma}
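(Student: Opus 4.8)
The plan is to run the argument of Lemma~\ref{elim-zariski2} almost verbatim, the one novelty being that the two families of monomials in $S(\underline{t})$ now spread out along the two distinct directions $\nu_1=(n,0)$ and $\nu_2=(0,n)$, so the auxiliary cancellations have to be carried out with powers of $X_2$ in one case and of $X_1$ in the other. By Proposition~\ref{elimination} it is enough, for each fixed $j>0$, to produce differential $2$-forms $\omega\in\Omega^{2}$ with components $P_i$ of the shape prescribed in (\ref{cond-mud}) whose dominant exponents are $\delta_1+j\nu_1+(n,n)$ and $\delta_2+j\nu_2+(n,n)$, respectively.

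First I would feed the form $\omega_0$ of (\ref{omega0}) into the map $\Psi_H$ of (\ref{Psi}). Since $H^{*}(X_3)=\underline{t}^{\lambda_1}+\sum_{k\ge0}a_k\underline{t}^{\delta_1+k\nu_1}+\sum_{k\ge0}b_k\underline{t}^{\delta_2+k\nu_2}$, the $\underline{t}^{\lambda_1+(n,n)}$ contribution cancels exactly as in Lemma~\ref{elim-zariski1} and one is left with
\[
\Psi_H(\omega_0)=\sum_{k\ge0}a_k\bigl(s_2(\delta_{12}-\lambda_{12})-s_1(\delta_{11}+kn-\lambda_{11})\bigr)\underline{t}^{\,\delta_1+k\nu_1+(n,n)}+\sum_{k\ge0}b_k\bigl(s_2(\delta_{22}+kn-\lambda_{12})-s_1(\delta_{21}-\lambda_{11})\bigr)\underline{t}^{\,\delta_2+k\nu_2+(n,n)} .
\]
A direct check with $\delta_1=(n-1)\lambda_1+n(i,-1)$, $\delta_2=(n-1)\lambda_1+n(-1,j)$ shows that the cross product $(\delta_{12}-\lambda_{12})(\delta_{21}-\lambda_{11})-(\delta_{11}-\lambda_{11})(\delta_{22}-\lambda_{12})$ equals, up to sign, $n\bigl((n-2)\lambda_{11}(1+j)+(n-2)\lambda_{12}(1+i)-n+nij\bigr)$, which is nonzero whenever $\delta_1,\delta_2\in\mathbb{N}^{2}$; hence $\{\delta_1-\lambda_1,\delta_2-\lambda_1\}$ is linearly independent and, as in Lemma~\ref{elim-zariski2}, the two associated linear systems in $(s_1,s_2)$ are solvable. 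Substituting their solutions into $\omega_0$ yields forms $\omega_1$ and $\omega_2$ with $\Psi_H(\omega_1)=\underline{t}^{\delta_1+(n,n)}+\sum_{k\ge1}a'_k\underline{t}^{\delta_1+k\nu_1+(n,n)}+\sum_{k\ge1}b'_k\underline{t}^{\delta_2+k\nu_2+(n,n)}$ and the analogous identity for $\omega_2$ with $\delta_1$ and $\delta_2$ interchanged.

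Next I would turn $\delta_1+(n,n)$ into the unique Newton vertex of $\Psi_H(\omega_1)$. The monomials $\underline{t}^{\delta_1+k\nu_1+(n,n)}$ with $k\ge1$ are already $\succeq\delta_1+(n,n)$, so only the $\delta_2$-family is an obstruction; I would remove it by subtracting $\sum_{k\ge1}c_kX_2^{k}\omega_2$ and choosing $c_1,c_2,\dots$ recursively so as to annihilate, one after another, the coefficients of $\underline{t}^{\delta_2+\nu_2+(n,n)},\underline{t}^{\delta_2+2\nu_2+(n,n)},\dots$. This recursion is triangular because $\Psi_H(X_2^{k}\omega_2)=\underline{t}^{k\nu_2}\Psi_H(\omega_2)$ has leading exponent $\delta_2+k\nu_2+(n,n)$ and every other exponent in it is either $\succeq\delta_1+(n,n)$ (the shifted $\delta_1$-family, since $k\ge1$) or strictly higher along the $\delta_2$-family; convergence of the series $\sum c_k X_2^k$ is dealt with exactly as in Lemma~\ref{elim-zariski2}. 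Setting $\omega_1'=\omega_1-\sum_{k\ge1}c_kX_2^{k}\omega_2$ one gets $\mathcal V(\Psi_H(\omega_1'))=\delta_1+(n,n)$, and for every $j>0$ the form $X_1^{j}\omega_1'$ satisfies (\ref{cond-mud}) — the factor $X_1^{j}\in\mathcal M_{3}$, which forces the pertinent $\epsilon_i$ into $\mathcal M_{3}$, is exactly where the restriction $j>0$ is used, and all the $\eta_i$ vanish since $\omega_1'$ has no $X_3$-term in its first two components — with $\mathcal V(\Psi_H(X_1^{j}\omega_1'))=\delta_1+j\nu_1+(n,n)$. By Proposition~\ref{elimination}, $\underline{t}^{\delta_1+j\nu_1}$ is eliminable for every $j>0$.

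Symmetrically, starting from $\omega_2$ and deleting its $\delta_1$-family by a triangular series $\sum_{k\ge1}d_kX_1^{k}\omega_1$ produces $\omega_2'$ with $\mathcal V(\Psi_H(\omega_2'))=\delta_2+(n,n)$, whence the admissible forms $X_2^{j}\omega_2'$ ($j>0$) have dominant exponent $\delta_2+j\nu_2+(n,n)$ and $\underline{t}^{\delta_2+j\nu_2}$ is eliminable for every $j>0$. The only step that really needs care — and it is no worse than in Lemma~\ref{elim-zariski2} — is the bookkeeping confirming that the triangular cancellations leave precisely the claimed dominant exponent and that the forms finally used meet every constraint in (\ref{cond-mud}).
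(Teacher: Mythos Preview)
Your proof is correct and follows the paper's argument essentially line for line: both use the form $\omega_0$ of (\ref{omega0}), invoke the linear independence of $\{\delta_1-\lambda_1,\delta_2-\lambda_1\}$ to solve the two linear systems for $\omega_1,\omega_2$, cancel the unwanted family via $\sum_{k>0}c_kX_2^k\omega_2$ (resp.\ $\sum_{k>0}d_kX_1^k\omega_1$), and then multiply by $X_1^j$ (resp.\ $X_2^j$) before appealing to Proposition~\ref{elimination}. You add a little extra detail --- the explicit cross-product computation and the verification that the resulting $P_i$ meet (\ref{cond-mud}) --- that the paper leaves implicit, but the strategy is identical.
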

\begin{proof}
	We will present differential $2$-forms with dominant exponents $\delta_1+(n,n)+j\cdot \nu_1$ and $\delta_2+(n,n)+j\cdot \nu_2$ for any $j>0$ and, in this way, the result follows by Proposition \ref{elimination}.
	
	We take $\omega_0\in\Omega^r$ given as (\ref{omega0}) and we consider the expansion $\Psi_H(\omega_0)$. As $\{\delta_1-\lambda_{1},\delta_{2}-\lambda_{1}\}$ is a linearly independent set, the linear systems $(*)$ and $(**)$ admit solutions that give us differentials $2$-forms $\omega_1$ and $\omega_2$ such that
	$$\Psi_H(\omega_1)=\underline{t}^{\delta_{1}+(n,n)}
	+\sum_{j>0}a'_j\underline{t}^{\delta_{1}+j\cdot\nu_1+(n,n)}+\sum_{j>0}b'_j\underline{t}^{\delta_{2}+j\cdot\nu_2+(n,n)}$$
	$$\Psi_H(\omega_2)=\underline{t}^{\delta_{2}+(n,n)}
	+\sum_{j>0}a''_j\underline{t}^{\delta_{1}+j\cdot\nu_1+(n,n)}+\sum_{j>0}b''_j\underline{t}^{\delta_{2}+j\cdot\nu_2+(n,n)}$$
	for some $a'_j, a''_j, b'_j, b''_j\in\mathbb{C}$.
	
	So, we can determine $c_{k}, d_{k}\in\mathbb{C}$ for $k>0$ such that
	$$\mathcal{V}\left (\Psi_H\left (\omega_1+\sum_{k>0}c_kX_2^k\omega_2\right )\right )=\delta_{1}+(n,n)\ \ \mbox{and}\ \ \mathcal{V}\left (\Psi_H\left (\omega_2+\sum_{k>0}d_kX_1^k\omega_1\right )\right )=\delta_{2}+(n,n).$$
	In addition, for any $j>0$, the differential $2$-forms $$X_1^j\left (\omega_1+\sum_{k>0}c_kX_2^k\omega_2\right )\ \  \mbox{and}\ \ X_2^j\left (\omega_2+\sum_{k>0}d_kX_1^k\omega_1\right )$$
	satisfy the conditions (\ref{cond-mud}) consequently, by Proposition \ref{elimination}, any term $\underline{t}^{\delta_1+j\cdot \nu_1}$ and $\underline{t}^{\delta_2+j\cdot \nu_2}$ are eliminable by $\tilde{\mathcal{A}}$-action.
\end{proof}

\noindent{\bf Acknowledgment.}
The authors are grateful to the anonymous referee for the 
suggestions that have improved this work, mainly by motivating us to include the Example \ref{notquasisimple}.
\vspace{0.2cm}

\noindent{\bf Data Availability Statement.} The authors declare that the data supporting the findings of this study are available within its references
\vspace{0.2cm}

\noindent{\bf Conflict of Interest Statement.} The authors certify that they have NO involvement in any organization or entity with any financial interest in the subject matter in this manuscript.

\end{document}